\DeclareMathOperator{\Ext}{Ext}
\def\CH{\mathrm{CH}}
\def\etaX{\eta}
\def\bc{\mathrm{bc}}
\def\ls{\mathrm{ls}}
\newcommand{\Sym}{\mathrm{Sym}}
\newcommand{\Cob}{\mathrm{Cob}}
\def\BC{\mathrm{BC}}
\def\LS{\mathrm{LS}}
\def\bimod{\text{-mod-}}
\def\lmod{\text{-mod}}
\def\rmod{\text{mod-}}
\def\yon{\mathcal{Y}}
\def\fm{\mathfrak{m}}
\newcommand{\scrM}{\EuScript{M}}
\def\scrP{\EuScript{P}}
\def\scrQ{\EuScript{Q}}
\def\scrR{\EuScript{R}}
\newcommand{\scrF}{\EuScript{F}}
\newcommand{\scrC}{\EuScript{C}}
\def\scrW{\EuScript{W}}
\def\scrHom{\mathcal{H}om}
\def\scrV{\EuScript{V}}
\def\bA{\mathbb{A}}
\def\bB{\mathbb{B}}
\def\bZ{\mathbb{Z}}
\def\bR{\mathbb{R}}
\def\bL{\mathbb{L}}
\def\bV{\mathbb{V}}
\def\cU{\mathcal{U}}
\def\cC{\mathcal{C}}
\def\cZ{\mathcal{Z}}
\def\cK{\mathcal{K}}
\def\scrD{\EuScript{D}}
\def\val{\mathrm{val}}
\def\unob{\mathrm{unob}}
\renewcommand{\rk}{\operatorname{rk}}
\newcommand{\power}[1]{\left\llbracket #1 \right\rrbracket}
\newcommand{\laurents}[1]{\left(\!\left( #1 \right)\!\right)}
\newtheorem{cons}[thm]{Construction}
\begin{document}

\title{Rational equivalence and Lagrangian tori on K3 surfaces}

\author[Sheridan and Smith]{Nick Sheridan and Ivan Smith}

\address{Nick Sheridan, School of Mathematics, University of Edinburgh, Edinburgh EH9 3FD, U.K.}
\address{Ivan Smith, Centre for Mathematical Sciences, University of Cambridge, Wilberforce Road, Cambridge CB3 0WB, U.K.}

\begin{abstract} {\sc Abstract:}  Fix a symplectic K3 surface $X$ homologically mirror to an algebraic K3 surface $Y$  by an equivalence taking a graded Lagrangian torus $L\subset X$ to the skyscraper sheaf of a point $y\in Y$.  We  show there are Lagrangian tori with vanishing Maslov class in $X$ whose class in the Grothendieck group of the Fukaya category is not  generated by Lagrangian spheres.  This is mirror to a statement about the ``Beauville--Voisin subring'' in the Chow groups of $Y$, and fits into a conjectural relationship between Lagrangian cobordism and rational equivalence of algebraic cycles. \end{abstract}

\maketitle

\section{Introduction}

\subsection{Context} Let $X=(X,\omega)$ be a symplectic Calabi--Yau manifold which is homologically mirror to a smooth algebraic  variety  $Y$ over the Novikov field $\Lambda = \C\laurents{q^\R}$, in the sense that there is a quasi-equivalence 
\[ \scrF(X)^{perf} \simeq \EuD(Y)\]
between the split-closed derived Fukaya category of $X$ and a DG enhancement of the derived category of coherent sheaves on $Y$. 
Then there is an induced isomorphism of Grothendieck groups 
\begin{equation}
\label{eqn:K0iso}
 K\left(\scrF(X)^{perf}\right) \,  \simeq \, K(\EuD(Y)),
\end{equation}
and the right-hand side is isomorphic to the algebraic $K$-theory $K(Y)$. 
This can be studied via the following twisted version of the Chern character (introduced by Mukai):
\begin{align}
\label{eqn:v}
v^\CH:K(Y) & \to \CH_*(Y)_\Q\\
\nonumber v^\CH (E) & \coloneqq \mathrm{ch}(E) \sqrt{\mathrm{td}(Y)},
\end{align}
which becomes an isomorphism after tensoring with $\Q$. 

On the other hand, Biran--Cornea construct a natural map 
\begin{equation}
\label{eqn:etaX}
\etaX: \mathrm{Cob}^\unob(X) \longrightarrow K\left(\scrF(X)^{perf}\right), 
\end{equation}
from the unobstructed Lagrangian cobordism group of $X$ to the Grothendieck group of the Fukaya category \cite{Biran-Cornea-2} (its image is the subgroup $K(\scrF(X)^{tw})$). There are numerous open questions concerning the map  $\etaX$, in particular whether it is injective.   
By composing it with the isomorphism \eqref{eqn:K0iso} and the map $v^\CH$, these questions can be related to ones about rational equivalence of algebraic cycles on $Y$.

For the rest of this paper we assume that $X$ and $Y$ are K3 surfaces, so the denominators can be removed from the map $v^\CH$, which defines an isomorphism $K(Y) \simeq \CH_*(Y)$ (see \cite[Chapter 12, Corollary 1.5]{Huybrechts:K3book}). 
Thus we have maps
\[ \Cob^\unob(X) \xrightarrow{\etaX} K\left(\scrF(X)^{perf}\right) \simeq \CH_*(Y),\]
which are our main focus.

\begin{rmk}
Relations between Lagrangian cobordism and the Grothendieck group of the mirror were first explored for elliptic curves in \cite{Haug};  the relation to rational equivalence of cycles was made explicit,  more generally for symplectic manifolds $X(B)= T^*_{\R}B/T^*_{\Z}B$ associated to tropical affine manifolds $B$, in \cite{SS:tropical}.  In that setting, cylindrical cobordisms in $\C^*\times X(B)$ were closely related to tropical curves in $\R\times B$. In this paper we again focus on Lagrangian tori, and intuition comes from the SYZ viewpoint, but we use different techniques -- partly because we do not require our tori to be fibres of a global SYZ fibration, and partly because even if such a fibration existed it would necessarily have singularities. 
\end{rmk}

\subsection{Spherical objects}
\label{Subsec:spherical_objects}

Let $Y$ be an algebraic $K3$ surface over an algebraically closed field $\BbK$ of characteristic zero (such as the Novikov field). 
Whereas $\CH_2(Y) = \Z\cdot[Y]$ and $\CH_1(Y) \simeq \Pic(Y)$ have finite rank, the behaviour of $\CH_0(Y)$ is more mysterious: in particular, when $\BbK$ is uncountable Mumford \cite{Mumford}  proved that $\CH_0(Y)$ is infinite-dimensional. 
Nevertheless, Beauville and Voisin \cite{BV} showed that any point $y$ lying on a rational curve in $Y$ has the same class $[y] = c_Y \in \CH_0(Y)$, and considered the finite-rank subgroup
\[ R(Y)\coloneqq \Z \cdot c_Y \oplus \CH_1(Y) \oplus \CH_2(Y) \subset \CH_*(Y).\]

The corresponding subgroup $R(Y) \subset K(\EuD(Y))$ can be given a purely categorical interpretation. 
Namely, let $S(Y) \subset K(\EuD(Y))$ be the subgroup generated by spherical objects; Huybrechts \cite{Huybrechts:Chow} and Voisin \cite{Voisin:0-cycles} have shown that $S(Y) \subset R(Y)$, and in fact $R(Y)$ is the saturation of $S(Y)$ (see Appendix \ref{app:BV}). 

We introduce the corresponding subgroup $S(X) \subset K\left(\scrF(X)^{perf}\right)$ generated by spherical objects, and its saturation $R(X)$. 
When $X$ and $Y$ are homologically mirror, the resulting isomorphism of Grothendieck groups \eqref{eqn:K0iso} clearly identifies the subgroups $S(X)$ and $S(Y)$, and therefore their saturations $R(X)$ and $R(Y)$.
In particular, since $R(Y)$ has finite rank, so do $R(X)$ and $S(X)$. 
It follows that the subgroup of $K(\scrF(X)^{perf})$ generated by Lagrangian spheres also has finite rank, since it is obviously contained in $S(X)$ (indeed in some cases the two are known to be equal \cite{SS:k3paper}).

\begin{rmk}
Beauville and Voisin proved that $R(Y)$ is closed under the intersection product, hence is a subring, and it is usually referred to as the \emph{Beauville--Voisin ring}.  
However we will treat it as a subgroup of $K(\EuD(Y))$, rather than as a subring. 
That is because the ring structure on $K(\EuD(Y))$ comes from the tensor product structure on $\EuD(Y)$, which we do not want to use (e.g., because existing proofs of homological mirror symmetry for K3 surfaces are not known to respect such a structure). 
Indeed, if we were allowing ourselves to use the ring structure, it would be simpler to characterise $R(Y)$ as the subring generated by $S(Y)$, rather than as the saturation of $S(Y)$.
\end{rmk}

\begin{rmk}
Some of the references quoted in this section assume that $\BbK = \C$, but this assumption can be removed using the Lefschetz principle, since rational curves and spherical objects on K3 surfaces in characteristic zero are rigid (cf. \cite{Huybrechts:K3book}).\end{rmk}

\subsection{Point-like objects}

Next we consider $0$-cycles on $Y$, in particular closed points of $Y$ (which correspond to point-like objects of the derived category, the next-simplest objects after spherical objects). 
For any subset $H \subset \CH_0(Y)$, we consider the locus $L_H(Y) \subset Y$ consisting of closed points $y \in Y$ such that $[y] \in H$. 
The following result is due to Mumford:

\begin{lem}[Mumford \cite{Mumford}]
\label{lem:mumford}
Suppose that the field of definition $\BbK$ of $Y$ is uncountable. 
If $H \subset \CH_0(Y)$ is countable, then the set  $L_H(Y)$ is a countable union of subvarieties of dimension $\leq 1$.
\end{lem}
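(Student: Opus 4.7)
The plan is to reduce to a single class $c \in \CH_0(Y)$ and then invoke Mumford's theorem. Since $H$ is countable, we have $L_H(Y) = \bigcup_{c \in H} F_c$ with $F_c := \{y \in Y : [y] = c\}$, and a countable union of countable unions of subvarieties of dimension $\leq 1$ is again of the same form; so it suffices to prove the result for each single-class set $F_c$.

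To exhibit $F_c$ as a countable union of closed subvarieties of $Y$, I would parameterise rational equivalences as follows. Assuming $F_c$ is non-empty, fix $y_0 \in F_c$. A closed point $y$ lies in $F_c$ iff the degree-zero $0$-cycle $y - y_0$ admits a presentation $y - y_0 = \sum_j (\iota_{C_j})_{*} \operatorname{div}(f_j)$ for some reduced irreducible curves $C_j \subset Y$ and rational functions $f_j$ on $C_j$. The Hilbert scheme of curves on $Y$ is a disjoint union of projective components indexed by Hilbert polynomial, so it has countably many components. For each finite tuple of Hilbert components and degree data for the $f_j$, the locus of such presentations $(C_j, f_j)_j$ is parametrised by a $\BbK$-scheme of finite type, and the assignment $(C_j, f_j)_j \mapsto y$ (where defined) is a morphism to $Y$. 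Its image is constructible by Chevalley's theorem, so its Zariski closure is a closed subvariety of $Y$; collecting over the countably many choices of discrete data exhibits $F_c$ as a countable union of closed subvarieties $V_i \subset Y$.

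It remains to show $\dim V_i \leq 1$ for every $i$. If some $V_i$ had dimension $2$ then, since $Y$ is irreducible of dimension $2$, we would have $V_i = Y$, so every closed point of $Y$ would lie in $F_c$ and hence have class $c$. This contradicts Mumford's theorem \cite{Mumford}: for a smooth projective surface with $p_g > 0$ (which applies to the K3 surface $Y$, with $p_g = 1$) over an uncountable algebraically closed field, the class map $Y(\BbK) \to \CH_0(Y)$ is non-constant -- indeed $\CH_0$ is infinite-dimensional. The main obstacle is precisely this step: the dimension bound is essentially equivalent to Mumford's theorem and relies on his trace-map argument, in which a nonzero holomorphic $2$-form on $Y$ obstructs the existence of $2$-dimensional families of rationally equivalent $0$-cycles. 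The countable-union step is comparatively formal, given the countability of components of the Hilbert scheme.
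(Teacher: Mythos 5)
Your proof follows essentially the same route as the paper, whose own argument is simply to cite Mumford for the decomposition of each single-class locus $L_{\{h\}}(Y)$ into a countable union of algebraic subsets, rule out full-dimensional components via the infinite-dimensionality (rank $>1$) of $\CH_0(Y)$, and take the union over the countable set $H$ — exactly the structure of your reduction, your Hilbert-scheme/Chevalley unpacking of Mumford's decomposition, and your dimension bound. One minor imprecision: taking Zariski closures of the Chevalley-constructible images only yields that $F_c$ is \emph{contained in} the countable union of the $V_i$ (so "every closed point of $Y$ would lie in $F_c$" should read "a dense constructible set of points"), but this is harmless, since a dense family of rationally equivalent points still contradicts Mumford's theorem and containment in a countable union of curves is all that the paper's applications of the lemma require.
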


\begin{proof}
Given a class $h \in K(Y)$, Mumford \cite{Mumford} proved that the locus $L_{\{h\}}(Y)$ is  a countable union of algebraic subsets; he also showed that $\CH_0(Y)$ is ``infinite-dimensional'', and in particular has rank $>1$, so none of these subvarieties can be full-dimensional. 
One then takes the union over all $h \in H$.
\end{proof}

We use this observation to prove the following:

\begin{main} \label{thm:non-spherical}
Let $X$ be a symplectic K3 surface and $T \subset X$ a Maslov-zero Lagrangian torus. 
Suppose there is an algebraic K3 surface $Y$ over $\Lambda$ and a closed point $y \in Y$, such that there exists a homological mirror equivalence
\[\scrF(X)^{perf} \simeq \EuD(Y)\]
taking $T$ to the skyscraper sheaf $\mathcal{O}_y$. 
Then for any countable subset $H \subset K\left(\scrF(X)^{perf}\right)$, any open neighbourhood of $T$ in $X$ contains a Maslov-zero Lagrangian torus $T'$ with $[T'] \notin H$.
\end{main}

We note that there are examples in which the hypotheses of Theorem \ref{thm:non-spherical} are satisfied.  The mirror quartic and mirror double plane are particular K3 surfaces arising from the Greene--Plesser mirror construction, for which we have:

\begin{thm}[\cite{SS:k3paper}] \label{thm:quartic_plane} If $X$ is the mirror quartic or mirror double plane, equipped with an ambient-irrational K\"ahler form $\omega$, and $T \subset X$ a Maslov-zero Lagrangian torus, then $(X,\omega)$ is mirror to an algebraic K3 surface $Y$ over $\Lambda$ by an equivalence taking $T$ to a skyscraper sheaf $\mathcal{O}_y$ over a closed point $y \in Y$.
\end{thm}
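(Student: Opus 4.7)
The plan is to prove the theorem in three stages: (i) construct a homological mirror equivalence for $(X,\omega)$; (ii) compute the Floer self-ext algebra of $T$; and (iii) use the structure of $\EuD(Y)$ to recognise the image of $T$ as a skyscraper sheaf of a closed point. The main obstacle is stage (i); the remaining two stages are essentially formal once HMS is in hand.

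For stage (i), I would construct the equivalence $\scrF(X)^{perf}\simeq \EuD(Y)$ by a deformation-theoretic comparison. Starting from a normal-crossings degeneration of $X$ whose Fukaya category admits a tractable description on a generating collection of vanishing spheres, match $A_\infty$-structure constants at the large-volume limit with those of $\EuD$ of a corresponding degenerate mirror in the Greene--Plesser family. Then, having controlled both sides as first-order deformations via Hochschild cohomology, invoke the ambient-irrationality of $[\omega]$ to ensure that the Kodaira--Spencer map from $H^*(X)$ to $\mathrm{HH}^*(\scrF(X))$ is generic enough that the formal deformation recovering $\scrF(X,\omega)$ is rigidly identified with the family deformation producing a specific algebraic K3 surface $Y$ over $\Lambda$. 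Executing this identification is substantial and is the content of the authors' prior work.

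For stage (ii), decorate $T$ with a brane structure (spin structure and unitary local system). A small Hamiltonian perturbation yields a Morse model for the Floer complex, and the Maslov-zero hypothesis together with the Calabi--Yau condition implies $HF^*(T,T)\cong H^*(T^2;\Lambda)$ as a graded algebra, i.e.\ an exterior algebra on two degree-one generators. Under the equivalence of stage (i), the image $E\in\EuD(Y)$ of $T$ therefore has $\mathrm{Ext}^*(E,E)$ isomorphic to this exterior algebra, matching $\mathrm{Ext}^*(\mathcal{O}_y,\mathcal{O}_y)$ for any closed point of a K3.

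For stage (iii), show that any $E\in\EuD(Y)$ with this self-ext algebra is a skyscraper sheaf. The condition $\mathrm{End}(E)=\Lambda$ forces $E$ to be simple; vanishing of $\mathrm{Ext}^i(E,E)$ outside $[0,2]$ together with Serre duality $[2]$ and a $t$-structure argument shows that $E$ is a coherent sheaf up to shift; a simple coherent sheaf on a K3 with self-Ext of these dimensions must have zero-dimensional support (Riemann--Roch and local-to-global rule out curve-supported torsion sheaves, whose $\mathrm{Ext}^1$ is generically larger), and simplicity then yields $E\cong \mathcal{O}_y[k]$ for some closed $y\in Y$ and $k\in\bZ$. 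Finally, matching the unit of $HF^0(T,T)$ in degree zero with that of $\mathrm{Ext}^0(\mathcal{O}_y,\mathcal{O}_y)$ rules out nontrivial shifts, yielding $E\cong\mathcal{O}_y$ as required.
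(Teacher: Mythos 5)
Your stage (iii) contains the essential gap. The claim that any $E\in\EuD(Y)$ whose self-Ext algebra is an exterior algebra on two degree-one generators must be a (shifted) skyscraper sheaf is false. For a concrete counterexample, let $C\subset Y$ be a smooth curve of genus one; then $\mathcal{O}_C$ is simple, and $\ext^1(\mathcal{O}_C,\mathcal{O}_C)=h^1(C,\mathcal{O}_C)+h^0(C,N_{C/Y})=1+1=2$ while $\ext^0=\ext^2=1$, so $\mathcal{O}_C$ is a ``point-like'' object in your sense with one-dimensional support — your parenthetical assertion that Riemann--Roch rules out curve-supported torsion sheaves is precisely wrong in the genus-one case (the Mukai vector $(0,[C],0)$ has square $2g-2=0$). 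More importantly, any stable sheaf $E$ with isotropic Mukai vector $v(E)^2=0$ is point-like, and on a K3 these exist in profusion; the large derived autoequivalence group (spherical twists, tensoring by line bundles, Fourier--Mukai transforms) moves $\mathcal{O}_y$ to such objects while preserving the Ext-algebra. Since a homological mirror equivalence is only determined up to post-composition with an autoequivalence of $\EuD(Y)$, nothing forces $T$ to land on a skyscraper rather than, say, a stable bundle of positive rank.

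What the paper actually does is exactly the step your argument omits: it invokes the proof of \cite[Lemma 4.10]{SS:k3paper} to show that the given mirror equivalence \emph{can be composed with an autoequivalence} so that $T$ goes to $\mathcal{O}_y$. This is not formal. It uses that the Mukai vector of the image of $T$ is algebraic, primitive, and isotropic, and then relies on the specific Picard lattices $\Pic(Y)\simeq\langle 4\rangle$ (mirror quartic) or $\langle 2\rangle$ (mirror double plane) appearing in \cite{SS:k3paper}, together with results on the action of $\Auteq(\EuD(Y))$ on isotropic Mukai vectors, to produce the required autoequivalence. Your stages (i) and (ii) are in the right spirit (stage (i) is the content of \cite{SS:k3paper}, and stage (ii) correctly identifies the image of $T$ as a point-like object), but stage (iii) needs to be replaced by this lattice-theoretic autoequivalence argument.
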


\begin{proof}
\cite{SS:k3paper} shows that $(X,\omega)$ is homologically mirror to an algebraic K3 surface $Y$ with $\Pic(Y) \simeq \langle 4 \rangle$ (for the mirror quartic) or $\langle 2\rangle$ (for the mirror double plane). 
It follows from the proof of \cite[Lemma 4.10]{SS:k3paper} that the mirror equivalence can be composed with an autoequivalence so that it takes $T$ to a skyscraper sheaf $\mathcal{O}_y$.  
\end{proof}

\begin{rmk}
One ``geometrically meaningful'' example of a countable subset of $K\left(\scrF(X)^{perf}\right)$ is the subgroup generated by Lagrangian spheres; another is its saturation. Both of these have finite rank by the previous section, and are therefore countable. For the examples of Theorem \ref{thm:quartic_plane}, whilst  $(X,\omega)$ contains Lagrangian spheres, the fact that $[T']\in K(\scrF(X)^{perf})$ is not generated by Lagrangian spheres for \emph{any} torus $T'$ near $T$ actually holds for homological reasons (see Remark \ref{rmk:notspanned}). 
However the fact that no non-zero multiple of $[T']$ is generated by Lagrangian spheres seems to have no elementary proof.
\end{rmk}

Combining Theorem \ref{thm:non-spherical} with the existence of the homomorphism $\etaX$, we immediately obtain:

\begin{cor}
\label{cor:cob-non-spherical}
In the situation of Theorem \ref{thm:non-spherical}, suppose we are furthermore given a countable set of Lagrangians $\{L_i\}$. Then any neighbourhood of $T$ contains a Maslov-zero Lagrangian torus $T'$ which is not contained in the subgroup of $\Cob^\unob(X)$ generated by the $L_i$. 
\end{cor}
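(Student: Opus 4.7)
The plan is to reduce the corollary directly to Theorem \ref{thm:non-spherical} by pushing the countability hypothesis forward along the Biran--Cornea homomorphism $\eta_X$ of \eqref{eqn:etaX}.

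Concretely, let $G \subset \Cob^\unob(X)$ denote the subgroup generated by the countable family $\{L_i\}$. Every element of $G$ is a finite integer linear combination of the $[L_i]$, so $G$ is itself countable; hence so is its image $H \coloneqq \eta_X(G) \subset K(\scrF(X)^{perf})$, which is a subgroup because $\eta_X$ is a homomorphism.

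Now apply Theorem \ref{thm:non-spherical} to this countable subgroup $H$: in any prescribed open neighbourhood of $T$ there exists a Maslov-zero Lagrangian torus $T'$ whose class in $K(\scrF(X)^{perf})$ does not lie in $H$. If the class of $T'$ in $\Cob^\unob(X)$ were contained in $G$, then its image $\eta_X([T'])$ would lie in $H$, contradicting the choice of $T'$. Hence $T' \notin G$, which is the desired conclusion.

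Since the argument is a formal consequence of Theorem \ref{thm:non-spherical} together with the existence of $\eta_X$, there is no substantive obstacle. The only point worth noting is that the torus $T'$ must itself define a class in $\Cob^\unob(X)$ in order for the statement to have content; but nearby Maslov-zero Lagrangian tori inherit unobstructedness in the same way $T$ does (indeed they are handled by Theorem \ref{thm:non-spherical} as objects of $\scrF(X)^{perf}$), so this is automatic.
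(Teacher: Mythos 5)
Your argument is exactly the paper's: the corollary is stated there as an immediate consequence of Theorem \ref{thm:non-spherical} combined with the existence of the Biran--Cornea homomorphism $\etaX$, precisely by taking $H$ to be the (countable) image under $\etaX$ of the subgroup generated by the $L_i$. The proposal is correct; note only that the paper additionally records a separate, more elementary proof via flux (Corollary \ref{cor:infgen}), which your write-up does not need.
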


However this also holds for elementary reasons of flux, see Corollary \ref{cor:infgen}. We remark that the proof via flux exhibits intriguing parallels with Mumford's argument proving infinite-dimensionality of $\CH_0(Y)$, cf. Lemma \ref{lem:infdim}. In view of such analogies, Theorem \ref{thm:non-spherical} provides modest evidence for the expectation that the map $\etaX:\Cob^\unob(X) \to K(\scrF(X)^{tw})$ is a (rational)  isomorphism when $X$ has projective mirror.

\begin{rmk}
All of our arguments crucially use the uncountability of $\R$, and in particular the impossibility of covering a positive-dimensional real vector space with countably many hyperplanes. 
The real numbers enter the story via the Novikov field $\Lambda = \C\laurents{q^\R}$. 
However we recall that if $[\omega] \in H^2(X;\Q)$ is rational, then it is possible to define the \emph{rational} Fukaya category $\scrF_{rat}(X)$ over $\Q\laurents{q^\Q}$ by restricting the objects to be ``rational'' Lagrangians, and to prove versions of homological mirror symmetry over this field (see \cite{Seidel:HMSquartic}). 
Our proof of Theorem \ref{thm:non-spherical} does not work in this case, and nor does our elementary proof of Corollary \ref{cor:cob-non-spherical} (fundamentally because you \emph{can} cover a finite-dimensional rational vector space with countably many hyperplanes). 
The coefficient field $\Q\laurents{q^\Q}$ is uncountable, so one still expects the Grothendieck group of the Fukaya category to be ``large'' by mirror symmetry, but the large number of objects of the Fukaya category really has to do with the large number of local systems on a single Lagrangian rather than a large number of geometrically distinct rational Lagrangians.   
Thus it is interesting to consider whether the cobordism group of rational Lagrangians (without local systems) should be very large, in analogy with $\CH_0(Y)$ for a K3 surface over an uncountable field, or very small, in analogy with the Bloch--Beilinson conjecture which says that $\CH_0(Y) \simeq \Z$ for a $K3$ surface over $\overline{\Q}$.
\end{rmk}

\subsection{Comments on proof of Theorem \ref{thm:non-spherical}}

The idea of the proof of Theorem \ref{thm:non-spherical} is, roughly, to show that homological mirror symmetry matches tori $T'$ near $T$ with skyscraper sheaves $\mathcal{O}_{y'}$ for points $y'$ near $y$. 
Then it suffices to show that some nearby torus $T'$ gets matched with a point $y'$ which misses $L_H(Y)$ (we identify $H$ with a subgroup of $K(Y)$ via the isomorphism $K\left(\scrF(X)^{perf}\right) \simeq K(Y)$ induced by mirror symmetry).

It is already clear that we will need to use some notion of topology on $Y$: the relevant notion is not the Zariski topology, but rather the analytic topology arising from the non-Archimedean valuation on the Novikov field, $\val: \Lambda \to \R \cup \{\infty\}$. 
We take the opportunity to introduce the notation $|\cdot| = \exp(-\val(\cdot))$ for the corresponding norm, $U_{\Lambda} \coloneqq \val^{-1}(0) = |\cdot|^{-1}(1)$ for the unitary group of the Novikov field, and $\Lambda_{>0} \coloneqq \val^{-1}(\R_{>0})$ for the subring of elements with positive valuation.

We recall how rigid analytic geometry arises on the symplectic side, following \cite{Fukaya:cyclic}. 
Lagrangian perturbations of $T$, modulo Hamiltonian isotopy, are parametrized by a neighbourhood $A$ of $0 \in H^1(T;\R)$. 
Perturbations equipped with a $U_\Lambda$-local system are parametrized by $\LS(A) = A \times H^1(T;U_\Lambda)$. 
We denote the object of $\scrF(X)$ corresponding to $a \in \LS(A)$ by $T^\ls_a$.
Assuming for simplicity that $A$ is a polytope, $\LS(A)$ can be naturally endowed with the structure of an affinoid subdomain $\val^{-1}(A) \subset H^1(T;\Lambda^*)$. 

We consider the locus $\LS_H(A) \subset \LS(A)$ of all points $a$ such that $[T^\ls_a] \in H$.
Suppose for the moment that there were an analytic map $\psi: \LS(A) \to Y$ identifying $\LS(A)$ with an analytic neighbourhood of $y$, such that the homological mirror functor identified $T^\ls_a$ with $\mathcal{O}_{\psi(a)}$.
Then $\LS_H(A) = \psi^{-1}(L_H(Y))$ would be contained in a countable union of analytic curves in $\LS(A)$ by Lemma \ref{lem:mumford}. 
The projection of an analytic curve under the ``tropicalization map'' $\val: \LS(A) \to A$ is a tropical curve; the complement of a countable union of tropical curves in $A$ is everywhere dense, and in particular non-empty. 
A point in this complement corresponds to a Lagrangian torus $T'$ such that $[(T',\xi)]$ is not generated by objects with $K$-class in $H \subset K\left(\scrF(X)^{perf}\right)$ for any unitary local system $\xi$. 
This holds in particular for the trivial local system, which would complete the proof of Theorem \ref{thm:non-spherical} assuming the existence of the map $\psi$.

If homological mirror symmetry had been proved via family Floer theory \cite{Fukaya:cyclic,Abouzaid:ICM}, and the torus $T$ were a fibre of the SYZ fibration, then the map $\psi$ would exist by construction. 
However the existing proofs of homological mirror symmetry for K3 surfaces, at least in \cite{Seidel:HMSquartic,SS}, are different: they identify the categories via deformation theory (and even if one had proved homological mirror symmetry via family Floer theory, one may want to consider tori $T$ which are not fibres of the SYZ fibration).  In particular, the fact that the homological mirror functor matches $T$ with $\mathcal{O}_y$ does not immediately imply that nearby tori get matched with skyscraper sheaves of nearby points: this requires an argument involving the deformation theory of these objects. 

It is natural to study the deformation theory of objects of the Fukaya category using the framework of bounding cochains, rather than local systems. 
We consider the set $\BC = H^1(T;\Lambda_{>0})$ of positive-energy bounding cochains on $T$, whose points $a \in \BC$ parametrize objects $T^\bc_a$ of $\scrF(X)$.  Using formal deformation theory, together with Artin's approximation theorem \cite{Artin} to guarantee a finite radius of convergence, we prove that there exists an analytic map $\varphi$ from an analytic neighbourhood $\BC' \subset \BC$ of the zero bounding cochain to an analytic neighbourhood of $y$, having the property that the homological mirror functor takes $T^\bc_a$ to $\mathcal{O}_{\varphi(a)}$. 

We recall that the object $T^\bc_a$, corresponding to the torus $T$ equipped with a bounding cochain $a \in \BC$, is quasi-isomorphic in $\scrF(X)$ to the object $T^\ls_{\exp(a)}$, corresponding to the torus $T$ equipped with the local system $\exp(a)$ \cite{Fukaya:cyclic}. 
The exponential map identifies $\BC'$ with an analytic neighbourhood $\LS'(A) \subset \LS(A)$ of the trivial local system on $T$, so we obtain an analytic map $\psi = \varphi \circ \exp^{-1}$ from $\LS'(A)$ to an analytic neighbourhood of $y$ such that the homological mirror functor takes $T^\ls_a$ to $\mathcal{O}_{\psi(a)}$. 

This is still not quite enough to implement the proof of Theorem \ref{thm:non-spherical} outlined above, because the analytic neighbourhood $\LS'(A) \subset \LS(A)$ is not `large enough': its points correspond to local systems on the single torus $T$ which are sufficiently close to the trivial local system in the analytic topology, so cannot give us information about the other tori $T'$.
A further argument is needed: we prove that the subset $\LS_H(A) \subset \LS(A)$ is a countable union of differences of analytic sets, without reference to mirror symmetry. 
This relies on a general result that we can parametrize the quasi-isomorphism classes of objects with $K$-class in $H$ by a countable set of affine varieties, together with a careful application of Chevalley's theorem. 
The intersection $\LS_H(A) \cap \LS'(A)$ coincides with $\psi^{-1}(L_H(Y))$, which is contained in a countable union of curves. 
It follows that no component of $\LS_H(A)$ is full-dimensional, which implies that this set is contained in a countable union of curves, which allows us to conclude.

\subsection{Plan}

We summarize well-known background on the formal deformation theory of objects in $A_\infty$ categories in Section \ref{Sec:def}, and on constructions of algebraic families of objects in Section \ref{Sec:ob_fam}. 
In Section \ref{Sec:tori} we recall the construction of the affinoid domains $\BC$ and $\LS(A)$ and the relationship between them, and prove that $\LS_H(A) \subset \LS(A)$ is a countable union of differences of analytic sets. 
In Section \ref{Sec:K-theory} we show how to construct the map $\varphi$ relating points on $Y$ to tori with local systems in $X$. 
The proof of Theorem \ref{thm:non-spherical} is completed in Section \ref{Sec:proof}. 
We explain the alternative proof of Corollary \ref{cor:cob-non-spherical} via flux in Section \ref{Sec:flux}, together with some interesting analogies between flux arguments and Mumford's proof of infinite-dimensionality of $\CH_0$. 
We conclude with the speculative Section \ref{Sec:OG}, concerning the mirror to the O'Grady filtration \cite{oGrady}, which is a filtration $S_0(Y) \subset S_1(Y) \subset \ldots \subset \CH_0(Y)$ whose first part is $S_0(Y) = \Z \cdot c_Y$.

\paragraph{Acknowledgements.}  Denis Auroux pointed out the elementary proof of Corollary \ref{cor:cob-non-spherical} via flux considerations. Paul Seidel suggested a different route to proving Theorem \ref{thm:non-spherical}; we found that technically harder to implement, but it influenced our understanding. Thanks to both of them, to Daniel Huybrechts for helpful conversations, and to the anonymous referee for useful queries. 

N.S.~was partially supported by a Royal Society University Research Fellowship, a Sloan Research Fellowship, and by the National Science Foundation through Grant number DMS-1310604 and under agreement number DMS-1128155. I.S. was partially supported by a Fellowship from EPSRC and, whilst holding a Research Professorship at MSRI during Spring semester 2018,  by the National Science Foundation under Grant No. DMS-1440140. 

\section{Deformations of objects in $A_\infty$ categories}\label{Sec:def}

The purpose of this section is to summarize some basic results about deformation theory of objects in $A_\infty$ categories, following e.g. \cite{FO3,Fukaya2003,ELO:I,ELO:II,ELO:III}. 
We prove only the results we will need, and make no attempt at generality.

Let $\scrC$ be an $A_\infty$ category defined over a field $\BbK$. 
We use the conventions of \cite{Seidel:FCPLT}, in which the cochain-level morphism spaces are denoted $\hom^\bullet(C_0,C_1)$ and the cohomological morphism spaces are $\Hom^\bullet(C_0,C_1) \coloneqq H^\bullet(\hom^\bullet(C_0,C_1),\mu^1)$. 

\subsection{Maurer--Cartan equation}

Let $C$ be an object of $\scrC$. 
An element $\delta \in \hom^1(C,C)$ satisfies the \emph{Maurer--Cartan equation} if
\begin{equation}
\label{eqn:MC}
\sum \mu^*(\delta,\ldots,\delta) = 0.
\end{equation}
The left-hand side is an infinite sum, so needs a reason to converge. 
We assume the morphism spaces of $\scrC$ come equipped with exhaustive complete decreasing filtrations $F^{\ge *} \hom^\bullet(C_0,C_1)$, respected by the $A_\infty$ structure maps in the obvious way, and we only consider the Maurer--Cartan equation for $\delta \in F^{\ge 1} \hom^1(C,C)$. 
The sum then converges by completeness. 

One can define a new $A_\infty$ category whose objects are pairs $(C,\delta)$ and whose $A_\infty$ structure maps are deformed by the Maurer--Cartan solutions.
For example, the formula for the deformed differential is 
\begin{align}
\nonumber
\mu^1_{\delta_0,\delta_1}: \hom^\bullet((C_0,\delta_0),(C_1,\delta_1)) &\to \hom^{\bullet+1}((C_0,\delta_0),(C_1,\delta_1)) \\
\label{eqn:defmu1}
\mu^1_{\delta_0,\delta_1}(c) &= \sum \mu^*(\delta_0,\ldots,\delta_0,c,\delta_1,\ldots,\delta_1).
\end{align}
Once again, the infinite sum converges by completeness of the filtration. 
See, e.g., \cite[Equation (3.20)]{Seidel:FCPLT} for the general formula. 

\subsection{Obstructions}\label{subsec:obs}

Let $C$ be an object of $\scrC$, and $F^{\ge *}\hom^\bullet(C,C)$ a decreasing filtration which is degreewise finite (i.e., $F^{\ge *} \hom^\bullet(C,C)$ is a finite filtration for each $\bullet$).
A family of deformations of $C$ parametrized by an affine variety $\bA$ is an element $\delta \in F^{\ge 1} \hom^1(C,C) \otimes \mathcal{O}(\bA)$ satisfying the Maurer--Cartan equation \eqref{eqn:MC} (which converges because the filtration on $\hom^\bullet(C,C) \otimes \mathcal{O}(\bA)$ is degreewise finite and in particular complete in the graded sense). 
In particular, $\delta(a) \in \hom^1(C,C)$ is a Maurer--Cartan solution for any $a \in \bA$. 
We denote the corresponding deformed object by $C_a \coloneqq (C,\delta(a))$.

For any Zariski tangent vector $v \in T_a \bA$, we have an element $v(\delta) \in \hom^1(C_a,C_a)$. 
Taking the derivative of the Maurer--Cartan equation along $v$ shows that $v(\delta)$ is $\mu^1_{\delta(a),\delta(a)}$-closed. 

\begin{defn}
The \emph{obstruction map} of the family $\delta$ at the point $a \in \bA$ is
\begin{align*}
o: T_a \bA &\to \Hom^1(C_a,C_a) \\
o(v)& \coloneqq[v(\delta)].
\end{align*}
\end{defn}

Now suppose we have families $\delta_i$ of deformations of objects $C_i$ parametrized by the same affine variety $\bA$, and a family of morphisms $f \in \hom^\bullet(C_0,C_1) \otimes \mathcal{O}(\bA)$ which is closed in the sense that 
\begin{equation}
\label{eqn:fclos}
\mu^1_{\delta_0(a),\delta_1(a)}(f) = 0
\end{equation}
in $\hom^\bullet(C_0,C_1) \otimes \mathcal{O}(\bA)$.

\begin{lem} 
\label{lem:obs}
For any $v \in T_a\bA$, we have 
\[ \mu^2_{\delta_0(a),\delta_0(a)}(o_0(v),f) + \mu^2_{\delta_1(a),\delta_1(a)}(f,o_1(v)) = 0\]
in $\Hom^\bullet(C_{0,a},C_{1,a})$, where $o_i$ is the obstruction map of $\delta_i$.
\end{lem}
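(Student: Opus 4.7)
The plan is to differentiate the closedness equation \eqref{eqn:fclos} along the Zariski tangent vector $v$ and then read off the claimed identity as its cohomological shadow. Concretely, the equation $\mu^1_{\delta_0(a),\delta_1(a)}(f) = 0$ holds identically as an element of $\hom^\bullet(C_0,C_1)\otimes \mathcal{O}(\bA)$, so applying $v$, which acts as a derivation on the $\mathcal{O}(\bA)$ factor, produces a cochain-level identity in $\hom^\bullet(C_{0,a},C_{1,a})$ that we expect to split into exactly three kinds of contributions: one from differentiating $f$, one from differentiating the $\delta_0$-insertions, and one from differentiating the $\delta_1$-insertions.

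The key computation is to expand
\[
\mu^1_{\delta_0,\delta_1}(f) \;=\; \sum_{k,l\ge 0} \mu^{k+l+1}\bigl(\delta_0^{\otimes k},\,f,\,\delta_1^{\otimes l}\bigr)
\]
(this sum converges by degreewise finiteness of the filtration, which also lets $v$ pass under the sum) and differentiate term by term. The summands in which $v$ hits $f$ reassemble precisely into $\mu^1_{\delta_0(a),\delta_1(a)}(v(f))$. The summands in which $v$ hits some $\delta_0$ reassemble, after summing over all positions of the $v(\delta_0)$-insertion, into the deformed product $\mu^2\bigl(v(\delta_0),f\bigr)$ for the objects $(C_0,\delta_0(a)),(C_0,\delta_0(a)),(C_1,\delta_1(a))$; similarly for the $\delta_1$-side. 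The resulting identity reads
\[
\mu^1_{\delta_0(a),\delta_1(a)}\!\bigl(v(f)\bigr) \;+\; \mu^2\bigl(v(\delta_0),f\bigr) \;+\; \mu^2\bigl(f,v(\delta_1)\bigr) \;=\; 0
\]
at the cochain level. The first term is a $\mu^1_{\delta_0(a),\delta_1(a)}$-coboundary, hence vanishes in $\Hom^\bullet(C_{0,a},C_{1,a})$, and by definition $[v(\delta_i)] = o_i(v)$, giving the claimed equation.

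The main obstacle is bookkeeping rather than substance: one must be careful to sum over every position at which the derivative can land, and check that these partial sums genuinely reorganise into the deformed $\mu^2$ as defined in the deformed $A_\infty$ category (cf.\ the general analogue of \eqref{eqn:defmu1}). Once the degreewise-finite filtration hypothesis is invoked to justify commuting $v$ with the infinite sum, no further convergence or analytic issue arises, and signs are dictated by the Koszul rule applied to a derivation of degree $0$.
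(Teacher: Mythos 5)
Your proof is correct and is essentially the paper's argument: the paper likewise differentiates \eqref{eqn:fclos} along $v$ and observes that the two obstruction terms sum to $-\mu^1_{\delta_0(a),\delta_1(a)}(v(f))$ at the cochain level, hence vanish in cohomology. Your write-up simply spells out the term-by-term bookkeeping that the paper leaves implicit.
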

\begin{proof}
Differentiating \eqref{eqn:fclos} along $v$, we find that the sum of these two terms is equal to $-\mu^1_{\delta_0(a),\delta_1(a)}(v(f))$ on the cochain level, and in particular is exact.
\end{proof}

We can also consider formal deformations of an object, which can be defined without reference to an auxiliary filtration. 
Let $(R,\fm)$ be a complete local ring. 
A family of deformations of $C$ parametrized by $\spec(R)$ is a solution of the Maurer--Cartan equation $\delta \in \hom^1(C,C) \hat{\otimes} \fm$, where the hat denotes the $\fm$-adic completion of the tensor product. 
The completeness of the $\fm$-adic filtration guarantees convergence of the Maurer--Cartan equation. 
The obstruction map is defined at the unique maximal ideal:
\begin{equation}
\label{eqn:obsform}
 o: (\fm/\fm^2)^* \to \Hom^1(C,C),
 \end{equation}
and the analogue of Lemma \ref{lem:obs} holds.

\subsection{Versality}
\label{subsec:vers}

Let $R=\BbK\power{x_1,\ldots,x_j}$ be a formal power series ring, and $\fm \subset R$ the maximal ideal. 
We say that a family $\delta$ of deformations of $C$ parametrized by $\spec(R)$ is \emph{versal} if the obstruction map \eqref{eqn:obsform} is an isomorphism. 

\begin{rmk}
The versal deformation space of an object in an $A_\infty$ category may in general be some more complicated formal scheme, but we will only consider point-like objects whose versal deformation space takes this particularly simple form.
\end{rmk}

Note that if $S=\BbK\power{y_1,\ldots,y_k}$ is another power series ring, and $\eta: R \to S$ a homomorphism, we obtain a pullback family $\eta(\delta)$ of deformations parametrized by $\spec(S)$. 
The word `versal' suggests that any family of deformations of $C$ should be pulled back from the family $\delta$. 
We now prove a result of this flavour, which we will use later. 

Let $D$ be another object of $\scrC$, quasi-isomorphic to $C$, and consider a family of deformations of $D$ parametrized by $\spec(S)$ where $S = \BbK\power{y_1,\ldots,y_k}$. 
This corresponds to a solution of the Maurer--Cartan equation $\epsilon \in \hom^1(D,D) \hat{\otimes} \mathfrak{n}$, where $\mathfrak{n} \subset S$ is the maximal ideal. 

\begin{lem}
\label{lem:versdef}
In the above situation, suppose that $\delta$ is versal. 
Then there exists a homomorphism $\eta: R \to S$, and a morphism $f \in \hom^0(C,D) \hat{\otimes} S$ which is $\mu^1_{\eta(\delta),\epsilon}$-closed, such that $f(0) \in \hom^0(C,D)$ is a quasi-isomorphism. 

If $\epsilon$ is also versal, then $\eta$ is necessarily an isomorphism.
\end{lem}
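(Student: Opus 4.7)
The plan is to construct $\eta$ and $f$ by induction on the order in $\mathfrak{n}$, producing compatible reductions $\eta_n : R \to S/\mathfrak{n}^{n+1}$ and $f_n \in \hom^0(C,D) \otimes S/\mathfrak{n}^{n+1}$ satisfying $\mu^1_{\eta_n(\delta),\epsilon}(f_n) = 0$ modulo $\mathfrak{n}^{n+1}$, and then passing to the $\mathfrak{n}$-adic limit. For $n=0$, take $\eta_0$ to be the augmentation $R \to S/\mathfrak{n} = \BbK$ and $f_0 \in \hom^0(C,D)$ to be any quasi-isomorphism $C \to D$, which exists by hypothesis and automatically satisfies $\mu^1(f_0) = 0$.

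For the inductive step, pick arbitrary lifts $\tilde\eta : R \to S/\mathfrak{n}^{n+2}$ of $\eta_n$ and $\tilde f \in \hom^0(C,D) \otimes S/\mathfrak{n}^{n+2}$ of $f_n$. The induction hypothesis puts the obstruction
\[ O \coloneqq \mu^1_{\tilde\eta(\delta),\epsilon}(\tilde f) \]
in $\hom^1(C,D) \otimes (\mathfrak{n}^{n+1}/\mathfrak{n}^{n+2})$, and the $A_\infty$ relations (as used in the proof of Lemma \ref{lem:obs}) imply that $O$ is $\mu^1$-closed modulo $\mathfrak{n}^{n+2}$, defining a class $[O] \in \Ext^1(C,D) \otimes (\mathfrak{n}^{n+1}/\mathfrak{n}^{n+2})$. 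Write $\delta = \delta^{(1)} + \delta^{(2)} + \ldots$ with $\delta^{(k)} \in \hom^1(C,C) \otimes (\fm^k/\fm^{k+1})$, and modify $\tilde\eta$ to $\tilde\eta + \nu$, where $\nu$ is determined by a $\BbK$-linear map $\fm/\fm^2 \to \mathfrak{n}^{n+1}/\mathfrak{n}^{n+2}$. The only new contribution surviving modulo $\mathfrak{n}^{n+2}$ pairs $\nu(\delta^{(1)})$ with $\tilde f$ via $\mu^2$, giving
\[ \mu^1_{(\tilde\eta + \nu)(\delta),\epsilon}(\tilde f) \equiv O + \mu^2(\nu(\delta^{(1)}), f_0) \pmod{\mathfrak{n}^{n+2}}. \]
The correction $\nu \mapsto [\mu^2(\nu(\delta^{(1)}), f_0)]$ factors through the versal obstruction isomorphism of $\delta$ followed by $\mu^2(-, f_0) : \Ext^1(C,C) \xrightarrow{\sim} \Ext^1(C,D)$ (an isomorphism because $f_0$ is a quasi-isomorphism), both tensored with $\mathfrak{n}^{n+1}/\mathfrak{n}^{n+2}$. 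Hence $\nu$ can be chosen so that $[O + \mu^2(\nu(\delta^{(1)}), f_0)] = 0$; lifting the resulting coboundary produces $f' \in \hom^0(C,D) \otimes (\mathfrak{n}^{n+1}/\mathfrak{n}^{n+2})$ with $\mu^1(f') = -(O + \mu^2(\nu(\delta^{(1)}), f_0))$, and setting $\eta_{n+1} = \tilde\eta + \nu$, $f_{n+1} = \tilde f + f'$ closes the induction.

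The second statement comes from the $n = 0$ stage. With $\tilde\eta = 0$ and $\tilde f = f_0$, the obstruction reduces modulo $\mathfrak{n}^2$ to $\mu^2(f_0, \epsilon^{(1)})$; dualising the cancellation equation over $(\mathfrak{n}/\mathfrak{n}^2)^*$ yields
\[ \mu^2(-, f_0) \circ o_\delta \circ (\eta^{(1)})^* \;=\; -\,\mu^2(f_0,-) \circ o_\epsilon \]
as linear maps $(\mathfrak{n}/\mathfrak{n}^2)^* \to \Ext^1(C,D)$, where $\eta^{(1)} : \fm/\fm^2 \to \mathfrak{n}/\mathfrak{n}^2$ is the induced cotangent map and $o_\delta, o_\epsilon$ are the respective obstruction maps. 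If $\epsilon$ is versal then $o_\epsilon$ is an isomorphism, and the other three maps on the left are already isomorphisms (by versality of $\delta$ and because $f_0$ is a quasi-iso), so $(\eta^{(1)})^*$ must be an isomorphism; hence $\eta$ is an isomorphism of complete local rings. The main bookkeeping obstacle throughout is verifying that the variation of $\mu^1_{\tilde\eta(\delta),\epsilon}(\tilde f)$ under modification of $\tilde\eta$ by $\nu$ really does reduce, modulo $\mathfrak{n}^{n+2}$, to the single term $\mu^2(\nu(\delta^{(1)}),f_0)$: the higher pieces $\delta^{(k)}$ for $k \geq 2$, all $A_\infty$ operations of arity $\geq 3$, and contributions involving $\epsilon$ are killed by the order count, but this accounting needs care.
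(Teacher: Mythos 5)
Your proposal is correct and follows essentially the same route as the paper: an order-by-order (equivalently, modulo powers of $\mathfrak{n}$) solution of $\mu^1_{\eta(\delta),\epsilon}(f)=0$, absorbing the obstruction at each stage by adjusting the linear part of $\eta$ via versality of $\delta$ and invertibility of composition with the quasi-isomorphism $f_0$, then trivialising on the chain level. Your treatment of the second statement likewise matches the paper's, which applies Lemma \ref{lem:obs} to compare the two obstruction maps through $[f_0]$ and concludes via the (formal) inverse function theorem.
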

\begin{proof}
We expand in multi-indices $m$:
\[
\delta(x) = \sum_{m} \delta_{m} \,x^{m}, \qquad 
\eta(x_i) = \sum_{m} \eta^i_{m}\, y^{m}, \qquad
f(y) = \sum_m f_m \, y^m,
\]
and solve the equation
\[ \mu^1_{\eta(\delta),\epsilon}(f) = 0\] 
for $\eta$ and $f$, order-by-order in $|m|$. 
The first step of the induction is to set $f_0 \in \hom^0(C,D)$ equal to a quasi-isomorphism between $C$ and $D$. 
This is $\mu^1$-closed by definition, so the equation is solved for $|m| = 0$.
Having solved to a certain order $|m|<k$, the equation at next order yields
\begin{equation} \label{eqn:iterative}
\mu^2\left(\sum_i \delta_i \, \eta^i_m,f_0\right) + \mu^1(f_m) = (\text{known})_m \qquad \text{for all $ |m|=k$.}
\end{equation}
The RHS is $\mu^1$-closed, using the fact that $\mu^1_{\eta(\delta),\epsilon}\circ \mu^1_{\eta(\delta),\epsilon}(f) = 0$ for any $\eta$ and $f$.  
In addition, since $f_0$ is a quasi-isomorphism, composition with $f_0$ is a quasi-isomorphism of chain complexes. 
Finally, any class in $\Hom^1(C,C)$ can be represented by a linear combination of the $\delta_i$, because $\delta$ is versal. 
It follows that \eqref{eqn:iterative} admits a solution for all $m$, which completes the inductive step.

For the second part, observe that we have a diagram
\[\xymatrix{\left(\fm/\fm^2\right)^* \ar[rr]^-{[\eta]^*} \ar[d]^-{o_\delta} && \left(\mathfrak{n}/\mathfrak{n}^2\right)^* \ar[d]^-{o_\epsilon} \\
\Hom^1(C,C) \ar[r]^-{\bullet [f_0]} & \Hom^1(C,D) & \Hom^1(D,D) \ar[l]_-{[f_0]\bullet}
}\]
which commutes up to sign by Lemma \ref{lem:obs}. 
If both $\delta$ and $\epsilon$ are versal, then both obstruction maps are isomorphisms; and since $f_0$ is a quasi-isomorphism, left- or right-composition with $[f_0]$ is an isomorphism. 
It follows that $[\eta]^*$ is an isomorphism, and hence that $\eta$ is an isomorphism by the inverse function theorem.
\end{proof}

\section{Algebraic families of objects}\label{Sec:ob_fam}

Let $\scrC$ be a proper triangulated $A_\infty$ category defined over a field $\BbK$. 
The purpose of this section is to show, using as little technology as possible, how certain families of quasi-isomorphism classes of objects of $\scrC$ can be parametrized by affine varieties (we follow  \cite[Lemma A.14]{Seidel:Fukaya_lefschetz_II1/2}).
We will assume that $\scrC$ is strictly proper, i.e. finite-dimensional on the cochain level, and that it admits strict units, since this involves no loss of generality (one can replace $\scrC$ with a strictly unital minimal model) and can be arranged in appropriate models of the Fukaya category. 

\begin{rmk}
\label{rmk:param_prelim}
We are using ``parametrizing'' in a loose sense here: we mean that to each point in one of our affine varieties there corresponds an object of $\scrC$, and every object of $\scrC$ of the relevant flavour is quasi-isomorphic to that represented by a point in one of these affine varieties (but not necessarily a unique point). 
\end{rmk}

\subsection{Twisted complexes}

Recall that an object $C$ of $\scrC$ is said to be \emph{generated} by the objects $\{C_i\}$ if it is contained in the smallest triangulated subcategory of $\scrC$ which contains the objects $C_i$. 

\begin{cons}
\label{cons:param_gen}
Let $\{C_i\}$ be a countable set of objects of $\scrC$. 
We construct a countable set of affine varieties $\bA^{tw}_j$ parametrizing all quasi-isomorphism classes of objects generated by the $C_i$. 
\end{cons}

We recall the definition of the $A_\infty$ category $\scrC^{tw}$ of twisted complexes over an $A_\infty$ category $\scrC$, see \cite[Chapter 3]{Seidel:FCPLT}. 
An object is generated by the $C_i$ if and only if it is quasi-isomorphic to a twisted complex built from the $C_i$. 

Let $\scrV \subset \scrC$ be the subcategory whose objects are shifts of the $C_i$.   
We introduce a category $\scrV^{\oplus,f}$ whose objects are pairs $(V,f)$ where $V = \oplus_{j \in J} V_j$ is a finite formal direct sum of objects of $\scrV$, and $f:J \to \Z$ is a function. 
The morphisms and structure maps are as in the additive enlargement $\scrV^\oplus$, and we have a finite filtration
\[ F^{\ge *} \hom^\bullet((V,f),(W,g)) \coloneqq \bigoplus_{g(k) \ge * + f(j)} \hom^\bullet(V_j,W_k).\] 
Then we have a vector space
\[ \bA^{pre}(V,f) \coloneqq F^{\ge 1} \hom^1((V,f),(V,f))\]
parametrizing possible differentials $\delta$ in a pre-twisted complex $(V,\delta)$ respecting the filtration induced by $f$ (see \cite[Section 3l]{Seidel:FCPLT}). 
This vector space is finite-dimensional, by our assumption that $\scrC$ is strictly proper.

The subset of twisted complexes consists of those $\delta$ satisfying the Maurer--Cartan equation \eqref{eqn:MC}, which is polynomial in $\delta$ because the filtration $F^{\ge *}$ is finite. 
Thus the subset $\bA^{tw}(V,f) \subset \bA^{pre}(V,f)$ of twisted complexes is naturally an affine variety.
It is clear that all twisted complexes built from the objects $C_i$ are isomorphic to some $(V,\delta)$, where $\delta \in \bA^{tw}(V,f)$.
It is also clear that there are countably many such $(V,f)$, so this completes the construction.\qed

\begin{rmk}
\label{rmk:param}
In this case every twisted complex over the $C_i$ is \emph{isomorphic} to one represented by a point in one of our affine varieties, but our future use of the word ``parametrizing'' will not always have this stronger sense.
\end{rmk}

\subsection{Idempotent summands}\label{subsec:idemp}

Suppose now that $\scrC$ is split-closed. 
Recall that an object is said to be \emph{split-generated} by the objects $\{C_i\}$ if it is contained in the smallest split-closed triangulated subcategory of $\scrC$ which contains the objects $C_i$. 

\begin{cons}
\label{cons:split_count}
Let $\{C_i\}$ be a countable set of objects of $\scrC$. 
We construct a countable set of affine varieties $\bA^\pi_j$ parametrizing all quasi-isomorphism classes of objects split-generated by the $C_i$.
\end{cons}

We recall the definition of a homotopy idempotent $\wp$ in an $A_\infty$ category $\scrC$, and its abstract image in $\scrC^{mod}$ (see \cite[Chapter 4]{Seidel:FCPLT}). 
An object is split-generated by the $C_i$ if and only if its Yoneda image is quasi-isomorphic to the abstract image of a homotopy idempotent $\wp$ of a twisted complex $(V,\delta)$ over the $C_i$. 
 
A homotopy idempotent $\wp$ of a twisted complex $(V,\delta)$ consists of elements $\wp^d \in \hom^{1-d}_{\scrV^{tw}}(V,V)$, $d \ge 1$, satisfying
\begin{equation} \label{eqn:idempotent}
\sum_r \sum_{s_1,\ldots,s_r} \mu^r_{\scrV^{tw}}\left(\wp^{s_1},\ldots,\wp^{s_r}\right) = \left\{ \begin{array}{ll}
			\wp^{d-1} & \text{$d$ even} \\
			0 & \text{$d$ odd.}
		\end{array}\right.
\end{equation}
Explicitly, the equation $r=1$ says that the endomorphism $\wp^1$ is closed, and the equation $r=2$ says that it is idempotent up to the coboundary of $\wp^2$. 
Note that $\hom^{1-d}_{\scrV^{tw}}(V,V)$ vanishes for $d$ sufficiently large (since the total rank of $\hom^\bullet_{\scrV^{tw}}(V,V)$ is finite). 
It follows that we again have a finite set of polynomial equations in the finite set of variables $\delta,\wp^d$, whose common zero-set yields an affine variety $\bA^\pi_j$.\qed

\begin{rmk}
\label{rmk:param_pi}
Note that, as in Remark \ref{rmk:param}, every homotopy idempotent of a twisted complex over the $C_i$ is in fact \emph{isomorphic} to one represented by a point in one of the affine varieties $\bA^\pi_j$.
\end{rmk}

Let us explain how homotopy idempotents fit into the Maurer--Cartan framework of the previous section. 
Let $e_C$ denote the strict unit of an object $C\in \scrC$. Following \cite[Remark 4.10]{Seidel:FCPLT}, we consider a category $\Sigma^-\scrC$ of infinite twisted complexes $C = (C^0, C^{-1}, C^{-2}, \ldots)$ with $C^i \in \Ob\,\scrC$, and with an element $a\in \hom^k_{\Sigma^-\scrC}(C,D)$ comprising an essentially lower-triangular matrix
\[
a = (a^{ij}), \ \  a^{ij} \in \hom_{\scrC}^{k+j-i}(C^i, D^j), \quad \exists \, N \ \textrm{such \ that} \  a^{ij} = 0 \ \textrm{for} \ j<i-N.
\]
There is an exhaustive complete filtration on the morphism spaces of $\Sigma^-\scrC$, with $F^{\ge *}\hom^\bullet_{\Sigma^-\scrC}(C,D)$ consisting of morphisms with $a^{ij} = 0$ for $j < i+*$. 
There is then a category $\scrC^-$, whose objects are pairs $(C,\delta)$ with $C\in \Ob\,\Sigma^-\scrC$ and $\delta \in F^{\ge 1}\hom^1_{\Sigma^-\scrC}(C,C)$ satisfying the (resultingly convergent) Maurer--Cartan equation \eqref{eqn:MC}.

There is a canonical embedding $\scrC \hookrightarrow \scrC^-$ realised by taking the $A_{\infty}$-subcategory of objects $(C^0, 0, 0, \ldots)$. 
Furthermore, any homotopy idempotent $\wp$ for an object $C \in \scrC$ defines an object $(C,\wp)$ of $\scrC^-$ with underlying sequence $(C,C,C,\ldots)$ and 
\[
\delta_\wp^{ij} = \begin{cases} \wp^1 & i=2k, \, j=i+1\leq 0 \\ \wp^1-e_Y & i=2k+1, \, j=i+1\leq 0 \\ \wp^{j-i} & i+1<j\leq 0 \\ 0 & \textrm{else;} \end{cases}
\]
The homotopy idempotent equation \eqref{eqn:idempotent} is equivalent to the Maurer--Cartan equation for $\delta_{\wp}$.
We denote the subcategory of $\scrC^-$ consisting of objects of these two types by $\scrC^\pi$. 
The category $\scrC^{perf} \coloneqq (\scrC^{tw})^\pi$ is a model for the split-closure of $\scrC$.

Now a point $a \in \bA_j^\pi$ determines a twisted complex $(V,\delta_a)$ equipped with a homotopy idempotent $\wp_a$, and hence an object $V_a$ of $\scrC^{perf}$. 
The Maurer--Cartan elements $(\delta_a,\delta_{\wp_a})$ combine to form a single Maurer--Cartan element for an element of $(\scrC^\oplus)^-$.  
This places us within the framework of Section \ref{subsec:obs}, and in particular we have an obstruction map 
\[ o: T_a\bA \to \Hom^1_{\scrC^{perf}}(V_a,V_a).\]

\begin{rmk}
It is worth saying a word about the auxiliary filtration which ensures convergence of the Maurer--Cartan equation in this context. 
Because we assume $\scrC$ to be strictly proper, each of its $\hom$-spaces is concentrated in finitely many degrees, and it follows that the filtration $F^{\ge *}$ on the morphism spaces of $\scrC^{perf}$ is degreewise finite. 
One can combine the degreewise-finite filtration $F^{\ge *}$ with the finite filtration $G^{\ge *}$ arising from the filtration on the twisted complex, to form a degreewise-finite filtration $\mathcal{F}^{\ge *} \coloneqq \oplus_{Mi + Nj \ge *} F^{\ge i} \cap G^{\ge j}$ on the endomorphism algebra of $V_a$. 
The $A_\infty$ structure maps respect this filtration so long as $M$ and $N$ are non-negative, and by taking them to be sufficiently large we may ensure that $(\delta_a,\delta_{\wp_a}) \in \mathcal{F}^{\ge 1}\hom^1_{\scrC^{perf}}(V_a,V_a)$ (using $\delta_a \in G^{\ge 1} \cap F^{\ge 0}$, $\delta_{\wp_a} \in F^{\ge 1}$, and the finiteness of $G^{\ge *}$).
\end{rmk}

\subsection{Point-like objects}

For the purposes of this paper (in which all categories considered are 2-Calabi--Yau), an object $C$ of $\scrC$ is called \emph{point-like} if its cohomological endomorphism algebra is isomorphic to the cohomology algebra of a 2-torus, $H^\bullet(T^2;\Lambda)$ (i.e., an exterior algebra on two generators in degree $1$).

\begin{cons}
\label{cons:param_pt}
Let $\scrC$ be split-closed and admit a countable set of split-generators $\{C_i\}$. 
We construct a countable set of affine varieties $\bA^{pt}_j$ parametrizing all quasi-isomorphism classes of point-like objects in $\scrC$.
\end{cons}

For each of the affine varieties $\bA^\pi_j$ from Construction \ref{cons:split_count}, we define $\bA^{pt}_j \subset \bA^\pi_j$ to be the subset of points corresponding to point-like objects. 
It suffices to show that each $\bA^{pt}_j$ carries the structure of an affine variety. 

There is a coherent sheaf $\mathcal{E}^i \to \bA^\pi_j$ with fibre over $P=(V,f,\delta,\wp)$ being the degree $i$ part of the cohomological endomorphism algebra 
\[
H^i(\hom^*(P,P)) \coloneqq [\wp^1]\cdot H^i(\hom^*_{\scrV^{tw}}((V,\delta),(V,\delta)))\cdot [\wp^1].
\]   
The stratification by rank is lower-semicontinuous, so the subset defined by $\rk \mathcal{E}^i = \rk H^i(T^2;\Lambda)$ is a difference of affine subvarieties.  
Within this subset, the condition that the product $H^1 \otimes H^1 \to H^2$ is non-vanishing defines a Zariski-open subset, which is precisely the set of points in $\bA^\pi_j$ corresponding to point-like objects. 
A Zariski-open subset of a difference of affine subvarieties carries the structure of an affine variety, so we are done.\qed

\subsection{Objects representing classes in $K$-theory}

The next construction will make use of the following theorem of Thomason \cite{Thomason1997}, or more precisely its immediate corollary:

\begin{thm}[Thomason]
\label{thm:thomason}
Let $\scrC$ be a split-closed triangulated category. 
There is a bijection between the set of split-generating triangulated subcategories $\scrD \subset \scrC$ and the set of subgroups $G \subset K(\scrC)$. 
The subgroup corresponding to $\scrD \subset \scrC$ is $K(\scrD) \subset K(\scrC)$, and the subcategory corresponding to $G \subset K(\scrC)$ consists of all objects $C$ with $[C] \in G$.
\end{thm}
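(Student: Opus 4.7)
The plan is to verify that the assignments $\scrD \mapsto K(\scrD)$ and $G \mapsto \scrD_G := \{C \in \scrC : [C] \in G\}$ are mutually inverse bijections. First I would check the elementary structural properties of $\scrD_G$. It is a full triangulated subcategory: closure under shifts follows from $[C[1]] = -[C]$, and closure under cones follows from the relation $[Y] = [X] + [Z]$ in $K(\scrC)$ for any distinguished triangle $X \to Y \to Z$. Moreover, $\scrD_G$ is automatically \emph{dense} in $\scrC$ (and therefore split-generating, since $\scrC$ is split-closed): for every $C \in \scrC$ the biproduct $C \oplus C[1]$ has class $[C] + [C[1]] = 0 \in G$, so belongs to $\scrD_G$, and $C$ is a direct summand of this object.

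The easy half of the bijection is the identity $K(\scrD_G) = G$. The inclusion $\subseteq$ is immediate from the definition. For $\supseteq$, any element $g \in G$ can be written as $[A_0] - [A_1]$ for some $A_0, A_1 \in \scrC$; then the single object $A_0 \oplus A_1[1]$ has $K$-class equal to $g$ and hence lies in $\scrD_G$, showing $g \in K(\scrD_G)$.

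The main obstacle is the reverse identity $\scrD_{K(\scrD)} = \scrD$, which says that whenever $\scrD$ split-generates $\scrC$ and $[C] \in K(\scrD)$, the object $C$ must itself lie in $\scrD$. The inclusion $\scrD \subseteq \scrD_{K(\scrD)}$ is tautological, but the converse requires upgrading an abstract $K_0$-relation to a concrete statement about the objects themselves. My plan is to invoke Thomason's key lemma: for a dense triangulated subcategory $\scrD \subseteq \scrC$, one has $C \in \scrD$ whenever $[C] \in K(\scrD)$. Starting from $[C] = [D_+] - [D_-]$ with $D_\pm \in \scrD$, one rewrites this as $[C \oplus D_-] = [D_+]$ in $K(\scrC)$ and unwinds the presentation of $K(\scrC)$ by isomorphism classes modulo the distinguished-triangle relations; density is then used to arrange that every intermediate object arising in the witnessing zigzag of triangles already lies in $\scrD$, so that the final book-keeping forces $C$ itself to belong to $\scrD$. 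Proving this refinement is the heart of the matter; everything else amounts to formal manipulation of $K_0$-relations.
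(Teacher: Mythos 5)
Your proposal is correct and matches the paper's treatment: the paper states this result as Thomason's theorem and cites \cite{Thomason1997} without giving a proof, and the crux of your argument --- that a dense (equivalently, split-generating) triangulated subcategory contains every object whose class lies in the image of its $K$-group --- is precisely the key lemma of that same reference, which you likewise invoke rather than prove. Your surrounding formal verifications (that $\scrD_G$ is a strictly full, triangulated, dense subcategory, and that $K(\scrD_G)=G$ via the object $A_0\oplus A_1[1]$) are correct and constitute the standard reduction.
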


\begin{cor}
\label{cor:thomason}
Let $\scrC$ be a split-closed triangulated category, $\{S_i\}$ a set of split-generators for $\scrC$, and $\{C_j\}$ a set of objects of $\scrC$. 
If $C$ is an object of $\scrC$ such that the class $[C] \in K(\scrC)$ lies in the abelian subgroup generated by the classes $[C_j]$, then $C$ is generated by the objects $\{C_j\} \cup \{S_i \oplus S_i[1]\}$.
\end{cor}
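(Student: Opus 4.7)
The plan is to apply Theorem \ref{thm:thomason} to the triangulated subcategory $\scrD \subseteq \scrC$ generated by $\{C_j\} \cup \{S_i \oplus S_i[1]\}$, matching it under the Thomason bijection with the subgroup $G \subseteq K(\scrC)$ generated by the classes $[C_j]$. The conceptual point, and the reason the augmenting objects $S_i \oplus S_i[1]$ appear in the statement, is that they enforce split-density of $\scrD$ in $\scrC$ while remaining invisible in $K$-theory; once this is noticed, the result is essentially a formal consequence of Thomason.

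Concretely, I would first verify that $\scrD$ is split-generating in the sense required by Theorem \ref{thm:thomason}. Since each $S_i$ is a direct summand of $S_i \oplus S_i[1] \in \scrD$, the split-closure $\scrD^\pi$ contains every $S_i$, and the $S_i$ split-generate $\scrC$ by assumption, so $\scrD^\pi = \scrC$. I would then identify $K(\scrD) = G$: every class in $K(\scrD)$ is an integer combination of the $[C_j]$ and of $[S_i \oplus S_i[1]] = [S_i] + [S_i[1]] = [S_i] - [S_i] = 0$, giving $K(\scrD) \subseteq G$, while the reverse inclusion is immediate because each $C_j$ is an object of $\scrD$.

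Finally, reading the Thomason bijection from subgroups to subcategories, the split-generating triangulated subcategory associated to $G$ must coincide with $\scrD$, and consists precisely of those objects of $\scrC$ whose class in $K(\scrC)$ lies in $G$. By hypothesis $[C] \in G$, so $C \in \scrD$, which is the desired conclusion. I do not expect any real obstacle here beyond the initial observation that $[S_i \oplus S_i[1]] = 0$, which is exactly the mechanism that makes the $K$-theoretic datum $G$ and the triangulated subcategory $\scrD$ mutually determining.
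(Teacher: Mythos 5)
Your proposal is correct and follows essentially the same route as the paper: apply Theorem \ref{thm:thomason} to the triangulated subcategory generated by $\{C_j\} \cup \{S_i \oplus S_i[1]\}$, noting that it is split-generating because it contains the $S_i$ as summands, and that the classes $[S_i \oplus S_i[1]]$ vanish in $K(\scrC)$, so the associated subgroup is exactly the one generated by the $[C_j]$. The only difference is that you spell out the identification $K(\scrD)=G$ slightly more explicitly than the paper does, which is fine.
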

\begin{proof}
The triangulated subcategory generated by the objects $\{C_j\} \cup \{S_i \oplus S_i[1]\}$ is split-generating, since it split-generates the objects $S_i$. 
Therefore it coincides with the subcategory of all objects $C$ with $[C]$ lying in the subgroup generated by the classes $[C_j]$ together with the classes $[S_i \oplus S_i[1]] = 0$, by Theorem \ref{thm:thomason}.
\end{proof}

We now make the following:

\begin{cons}
\label{cons:param_kclass}
Let $\scrC$ be split-closed, $\{S_i\}$ be a countable set of split-generators, and $H \in K(\scrC)$. 
Then we construct a countable set of affine varieties $\bA^H_j$ parametrizing all quasi-isomorphism classes of objects $V$ with $[V] = H$.
\end{cons}

Let $C_0$ be an object with $[C_0] = H$. 
Then any object $V$ with $[V] = H$ is generated by the objects $\{C_0\} \cup \{S_i \oplus S_i[1]\}$ by Corollary \ref{cor:thomason}. 
The set of quasi-isomorphism classes of such objects is parametrized by the countable union of affine varieties from Construction \ref{cons:param_gen}. 
The $K$-class is constant on each of these affine varieties, so a subset of this countable set of affine varieties parametrizes quasi-isomorphism classes of objects with $K$-class $H$.\qed

\section{Families of tori}\label{Sec:tori}

Let $X$ be a symplectic $K3$ surface\footnote{All Lagrangians we consider are implicitly equipped with a brane structure, which consists of an orientation, grading, spin structure and $\omega$-tame almost-complex structure $J$ such that the Lagrangian bounds no non-constant $J$-holomorphic disc and intersects no non-constant $J$-holomorphic sphere. Any orientable Lagrangian with vanishing Maslov class admits such a brane structure: in particular, the condition on $J$ is generic for virtual dimension reasons (see \cite[Lemma 8.4]{Seidel:HMSquartic}).} such that $\scrF(X)$ is non-degenerate, which implies that there is a finite set of Lagrangians $\{L_i\}$ which split-generate $\scrF(X)^{perf}$. 
Let $T \subset X$ be a Maslov-zero Lagrangian torus.
In this section we consider ``the family of objects near $T$''.

\subsection{Analytic functions}

We start by recalling some basic notions from non-Archimedean analysis, namely the algebras of convergent functions on polydiscs and polytope domains. 
Everything we present here is completely elementary and suffices for our purposes, but the interested reader can find more information about these algebras and the role they play in rigid analytic geometry in \cite{BGR,Einsiedler2006}.

Recall that the field $\Lambda$ has a non-archimedean valuation
\[
\val: \Lambda \to \bR\cup\{\infty\}, \quad \val\left(\sum a_{\lambda} q^{\lambda}\right) = \mathrm{min}\{\lambda \in \bR \, | \, a_{\lambda} \neq 0\}, \quad \val(0) = +\infty
\] with associated norm $|\cdot| : \Lambda \to \R$ given by 
\[
|x| = e^{-\val(x)}.
\]
The norm defines the \emph{analytic topology} on $\Lambda$.

A \emph{polydisc} is a subset $P(b) \subset \Lambda^n$ of the form $\{x: \val(x_i) \ge b_i, i=1,\ldots,n\}$. 
A power series 
\[
f(x) \, = \, \sum_{\nu} f_{\nu} \cdot x^{\nu} \in \Lambda \power{x_1,\ldots,x_n}
\]
\emph{converges} on the polydisc if 
\[
\lim_{\|\nu\|\to\infty} \val(f_{\nu}) + \nu \cdot b = \infty
\]
where $\|\cdot\|$ denotes any choice of norm on $\bZ^n$. 
The convergent power series form a subalgebra of the formal power series (the Tate algebra), and a convergent power series can be evaluated at any point on the polydisc. 
We will call a function from the polydisc to the Novikov field \emph{analytic} if it is given by evaluation of a convergent power series.

Next we consider the map
\begin{align*}
\val: (\Lambda^*)^n &\to \R^n,\\
\val(x_1,\ldots,x_n) &= (\val(x_1),\ldots,\val(x_n)).
\end{align*}
If $P \subset \R^n$ is a bounded rational polytope, we define the polytope domain $Y_P \coloneqq \val^{-1}(P)$. 
A formal Laurent series $f(x) = \sum_{\nu \in \Z^n} f_{\nu} \cdot x^\nu$  converges on $Y_P$ if 
\[\lim_{\|\nu\| \to \infty} \val(f_\nu) + \nu \cdot p = +\infty \quad \text{for all $p \in P$}.\]
The convergent Laurent series form an algebra, and can be evaluated at any point on the polytope domain. 
We will call a function from the polytope domain to the Novikov field analytic if it is given by evaluation of a convergent Laurent series.

A subset of a polydisc or polytope domain is called an \emph{analytic subvariety} if it is the zero-set of a finite collection of analytic functions.  
We will call a subset \emph{naively constructible} if it is a countable union of differences of analytic subvarieties. 

\subsection{Bounding cochains}

We recall that the morphism spaces of the Fukaya category come equipped with the exhaustive and complete energy filtration, so the Maurer--Cartan equation \eqref{eqn:MC} converges for positive-energy $\delta$; and a positive-energy solution of the Maurer--Cartan equation is called a bounding cochain \cite{FO3}. 
We consider the space of bounding cochains on $T$. 
Since $\hom^{\bullet}_{\scrF(X)}(T,T) \simeq C^\bullet(T;\Lambda)$ is formal, the Maurer--Cartan equation is trivial. 
Therefore the space of bounding cochains is 
\[
\BC =  H^1(T;\Lambda_{>0}).
\]
For any $a \in \BC$ we denote the corresponding object  of $\scrF(X)$ by $T^\bc_a$.
Note that for any $\varepsilon>0$, the subset $\BC_\varepsilon = H^1(T;\Lambda_{\ge \varepsilon})$ of bounding cochains with energy $\ge \varepsilon$ is a polydisc.

We will consider the subcategory of the Fukaya category $\scrF(X)$ with the finite set of objects $\{L_i\}$ together with the objects $\{T^\bc_a\}_{a \in \BC_{\varepsilon}}$. 
It is essentially immediate from the definition that the $A_\infty$ structure maps in this category are analytic in $a \in \BC_\varepsilon$, for any $\varepsilon >0$.

\subsection{Local systems}

An open neighbourhood $U$ of $T\subset X$ is fibred by Lagrangian tori over an open neighbourhood of the origin $A \subset H^1(T;\R)$, with $T$ corresponding to the fibre $F_{0}$ over the origin. (It may be helpful to imagine the case in which $X$ carries a global singular SYZ fibration and $A$ is an open subset of the base.)  

Note that we have an isomorphism
\begin{align*}
\Lambda^* & \to \R \times U_\Lambda   \\
a & \mapsto \left(\val(a),u_a\right),
\end{align*}
where $u_a = q^{-\val(a)} \cdot a$. 
This induces an isomorphism 
\[ H^1(T;\Lambda^*) \simeq 
H^1(T;\bR) \times H^1(T;U_{\Lambda}),
\]
where the projection to the first factor is the map 
\[
\val: H^1(T;\Lambda^*) \to H^1(T;\bR)
\]
induced by the valuation map $\val: \Lambda^* \to \bR$.
We define the subset 
\[
\LS(A) \coloneqq \val^{-1}(A);\] 
the points $a \in \LS(A)$ correspond to objects $T^\ls_a$ of $\scrF(X)$, consisting of the fibre $F_{\val(a)}$ equipped with the unitary local system with holonomy $u_a \in H^1(F_{\val(a)};U_\Lambda)$.

Abouzaid \cite{Abouzaid:ICM}, following Fukaya \cite{Fukaya:cyclic}, proved the following key result.

\begin{thm}[Abouzaid, Fukaya]\label{thm:fuk_trick}
Consider the subcategory of the Fukaya category $\scrF(X)$ with the finite set of objects $\{L_i\}$ together with the objects $\{T^\ls_a\}_{a \in \LS(A)}$. 
By shrinking $A$ if necessary, and making appropriate choices of perturbations and bases in the morphism spaces, we can arrange that the $A_\infty$ structure maps are analytic in $a \in \LS(A)$.
\end{thm}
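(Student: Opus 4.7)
The plan is to realise the $A_\infty$ operations on the subcategory with objects $\{L_i\} \cup \{T^\ls_a\}_{a \in \LS(A)}$ as signed counts of pseudo-holomorphic polygons weighted by symplectic area and boundary holonomy, and then to exhibit each resulting structure coefficient as a convergent Laurent series in the affinoid coordinates on $\LS(A)$.

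First I would fix the geometric data. Using the Weinstein neighbourhood theorem, identify a neighbourhood of $T \subset X$ with a neighbourhood of the zero section in $T^*T$, so that for $t$ in a sufficiently small open $A \subset H^1(T;\R)$ the fibre $F_t$ is realised as the graph of a closed one-form representing $t$. Fixing a basis of $H^1(T;\Z)$ identifies $H^1(T;\Lambda^*)$ with $(\Lambda^*)^n$, so that $\LS(A) = \val^{-1}(A)$ acquires natural affinoid coordinates $a = (a_1,\ldots,a_n)$ with $\val(a) = t$. Next, choose an $\omega$-tame almost complex structure $J$ -- generic outside a neighbourhood of $T$, of a standard fibered form inside -- together with Floer perturbation data so that every $F_t$ is a Maslov-zero brane bounding no non-constant $J$-holomorphic disc, and so that all relevant polygon moduli with boundary on any tuple drawn from $\{L_i\} \cup \{F_t\}_{t \in A}$ are transversely cut out. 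To ensure that the structure maps land in a common ambient space as $a$ varies, pass to a cyclic or Morse-style cochain model in which $\hom^\bullet_{\scrF(X)}(T^\ls_{a_0},T^\ls_{a_1})$ is identified with a fixed finite-dimensional $\Lambda$-vector space independent of the $a_i$.

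Next I would pin down the $a$-dependence of the polygon counts. For a fixed input-output pattern of generators, the moduli space of polygons decomposes by relative class $\beta \in H_2(X,\bigcup_i F_{t_i})$. Exactness inside $T^*T$ yields a decomposition $\omega(\beta) = \omega_0(\beta) + \sum_i \langle \partial_i \beta, t_i \rangle$, where $\omega_0$ and the boundary functionals depend only on the integer data of $\partial \beta$; the local-system contribution is the monomial $\prod_i u_i^{\partial_i \beta}$. Combining these, the $\beta$-th contribution to each structure coefficient is the single Laurent monomial $\pm(\#\mathcal{M}_\beta)\, q^{\omega_0(\beta)} \prod_i a_i^{\partial_i \beta}$ in the coordinates on $\LS(A)$.

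The main obstacle is convergence of the total sum over $\beta$, and this is where the freedom to shrink $A$ is used. By Gromov compactness applied uniformly over the compact polytope $A$ (to a smooth family $\{J_t\}_{t \in A}$), the set $\{\beta : \exists t \in A, \mathcal{M}_\beta \neq \emptyset \text{ and } \omega(\beta;t) \le E\}$ is finite for every $E > 0$. Since the valuation of the $\beta$-th monomial at a point $a \in \LS(A)$ equals $\omega(\beta;\val(a)) = \omega_0(\beta) + \langle \partial\beta, \val(a)\rangle$, this finiteness is precisely the convergence criterion for a Laurent series on the polytope domain $\val^{-1}(A)$. Hence every structure coefficient defines an analytic function on $\LS(A)$. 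The $A_\infty$ relations hold pointwise for each $a$ by the standard gluing arguments and therefore extend to the analytic level automatically. Structure maps involving both the $L_i$ and the $T^\ls_a$ are handled identically: only the boundary components lying on fibres $F_{t_i}$ produce $a$-dependence, while components on the $L_i$ contribute a fixed $\Lambda$-scalar, leaving the convergence analysis unchanged.
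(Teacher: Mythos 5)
The paper does not actually prove Theorem \ref{thm:fuk_trick}: it is imported from Fukaya and Abouzaid, and the sentence immediately after the statement (pointing to \cite[Lemma 3.2, Equation 3.19]{Abouzaid:ICM}) flags exactly the ingredient your sketch is missing. Your outline reproduces the easy half of the standard argument --- areas vary affinely in the flux, unitary holonomy contributes monomials, and Gromov compactness over the compact polytope $A$ gives precisely the convergence criterion for a Laurent series on $\val^{-1}(A)$ --- but it silently assumes the hard half, namely that the integer counts $\#\mathcal{M}_\beta$ and the indexing of relative classes $\beta$ are independent of $t=\val(a)$. As you have set things up, the moduli problems genuinely change as the fibres $F_{t_i}$ move (the intersection points with the $L_i$ and with each other move, and rigid polygons can be born or die in the family), so a priori $\#\mathcal{M}_\beta$ is only piecewise constant in $t$ and your total sum is only piecewise a Laurent series; this is exactly the wall-crossing phenomenon that makes family Floer theory delicate, and it is what the ``appropriate choices of perturbations'' in the statement are for. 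The mechanism that removes it --- the Fukaya trick --- is to fix Floer/perturbation data at a reference fibre and transport it to $F_t$ by the diffeomorphisms realizing the flux, so that for $t$ in a sufficiently small polytope the moduli problems are literally identified and only the weights vary. This also repairs your transversality step: a single generic $J$ being regular simultaneously for the uncountable family of boundary conditions $\{F_t\}_{t\in A}$ does not follow from a Sard--Smale argument, whereas transporting fixed regular data does the job.

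A second, related omission: for polygons the boundary piece $\partial_i\beta$ on each Lagrangian is an arc, not a loop, so the pairing $\langle \partial_i\beta, t_i\rangle$ in your area decomposition is not defined until the arcs are closed up using base points at the generators. The resulting generator-dependent, affine-in-$t$ correction terms are precisely what must be absorbed by rescaling the basis elements by powers of $q$ --- the ``rescaling by flux'' of \cite[Lemma 3.2, Equation 3.19]{Abouzaid:ICM} that the paper cites. With transported perturbation data, the induced identification of generators across $t$, and this rescaling in place, your convergence argument does go through and is the standard one.
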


The appropriate choice of basis in the morphism spaces involves some rescaling by flux, see \cite[Lemma 3.2, Equation 3.19]{Abouzaid:ICM}.

The relation between bounding cochains and local systems has been established by Fukaya in \cite[Lemma 13.1]{Fukaya:cyclic}. 
It involves the map $\exp: \BC \to \LS(A)$ induced by the exponential map $\exp: \Lambda_{>0} \to U_\Lambda $. 

\begin{thm}[Fukaya]\label{thm:bc_ls}
For any $a \in \BC$, the object $T^\bc_a$ is quasi-isomorphic to the object $T^\ls_{\exp(a)}$ in $\scrF(X)$.
\end{thm}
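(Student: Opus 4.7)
The plan is to prove the quasi-isomorphism by identifying the bounding-cochain and local-system deformations of $T$ operation-by-operation on each pseudo-holomorphic disc, and then taking the identity morphism under the resulting identification.

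First, I would fix a cochain-level model of the Fukaya $A_\infty$-algebra of $T$ in which $a \in H^1(T;\Lambda_{>0})$ is represented by a genuine closed 1-form $\alpha \in \Omega^1(T;\Lambda_{>0})$ (using formality of the Fukaya algebra of the torus and the existence of harmonic representatives), and in which the unitary local system with holonomy $\exp(a)$ is implemented by weighting each pseudo-holomorphic disc $u$ with boundary on $T$ by $\exp\bigl(\int_{\partial u}\alpha\bigr) = \exp\langle a,[\partial u]\rangle$. In the bounding-cochain picture, the deformed structure map $\mu^k_a$ is obtained by inserting $n \ge 0$ auxiliary boundary marked points carrying $\alpha$, cyclically ordered with respect to the $k{+}1$ fixed inputs, and integrating over their positions.

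The central computation is then a cyclic-configuration-space identity: for each disc $u$ the insertion of $n$ copies of $\alpha$ yields, after integration, the factor $\tfrac{1}{n!}\bigl(\int_{\partial u}\alpha\bigr)^n$, and summing over $n$ produces
\[
\sum_{n\ge 0}\frac{1}{n!}\left(\int_{\partial u}\alpha\right)^n \;=\; \exp\langle a,[\partial u]\rangle,
\]
which is precisely the weight by which the same disc contributes to the local-system-deformed $\mu^k$. The identical disc-by-disc comparison works verbatim for $\mu^k$ operations whose inputs include morphisms in $\hom(L_i,T)$ and $\hom(T,L_i)$ for the split-generators $L_i$. Hence after the canonical identification of underlying morphism complexes, the two deformed $A_\infty$-structures on the full subcategory spanned by $T^\bc_a$ (respectively $T^\ls_{\exp(a)}$) together with the $L_i$ coincide term-by-term. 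In particular the strict unit $e_T$, reinterpreted as an element $f \in \hom_{\scrF(X)}^0(T^\bc_a,T^\ls_{\exp(a)})$, is closed under the deformed differential $\mu^1_{a,\exp(a)}$ and induces the identity on cohomology, and so is a quasi-isomorphism.

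The main technical obstacle is setting up a cochain model in which the combinatorial comparison above is genuinely rigorous: one needs perturbation data compatible with insertion of $1$-forms at auxiliary boundary marked points, honest cyclic symmetry so that the $\tfrac{1}{n!}$ factor emerges from the cyclically ordered configuration space rather than from abstract Stasheff combinatorics, and a compatible way to encode unitary local systems as disc weights, all in a fashion that glues into the ambient Fukaya category of $X$ used to prove \Cref{thm:fuk_trick}. Convergence of the exponential series is automatic because $\alpha$ has strictly positive $q$-adic valuation and non-constant pseudo-holomorphic discs on $X$ have energy bounded below.
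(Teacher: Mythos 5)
The paper does not actually supply a proof of Theorem \ref{thm:bc_ls}: it is quoted verbatim from Fukaya \cite[Lemma 13.1]{Fukaya:cyclic}, and your outline is in essence a sketch of Fukaya's own strategy, namely the ``divisor axiom'' computation in which inserting $n$ copies of a closed one-form representative $\alpha$ of $a$ along the boundary of a disc $u$ integrates to $\tfrac{1}{n!}\bigl(\int_{\partial u}\alpha\bigr)^n$, so that summing over $n$ reproduces the local-system weight $\exp\langle a,[\partial u]\rangle$. So you have correctly identified the mechanism. The problem is that, as a standalone proof, the argument assumes exactly what the cited result establishes. The disc-by-disc identity with the $\tfrac{1}{n!}$ factor is not a formal consequence of having a bounding cochain in $H^1$: it requires a chain-level model of the Fukaya category (de Rham or similar) whose perturbation data are compatible with the forgetful maps at the auxiliary boundary marked points, together with unitality/cyclic symmetry, and constructing such a model while remaining consistent with the ambient category used in Theorem \ref{thm:fuk_trick} is the technical heart of \cite{Fukaya:cyclic}. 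Your paragraph labelled ``the main technical obstacle'' is therefore not a remark about implementation details; it is the entire content of the theorem, and in the strictly proper, Hamiltonian-perturbation model of the Fukaya category used elsewhere in this paper the naive insertion of $1$-forms at boundary marked points is not even defined, so one must either work in Fukaya's model and transfer the statement, or cite the result as the paper does.

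A second, smaller inaccuracy: for operations with inputs in $\hom(L_i,T)$ and $\hom(T,L_i)$ the comparison is not ``verbatim.'' The boundary of the relevant polygon meets $T$ in arcs, not closed loops, so the local-system weight is the parallel transport along each arc rather than $\exp\langle a,[\partial u]\rangle$, and the identification of the morphism complexes underlying $T^\bc_a$ and $T^\ls_{\exp(a)}$ depends on a choice of trivialization of the flat bundle at the intersection points. This is repairable --- with $\alpha$ closed, the sum over insertions on each arc exponentiates to the parallel transport of the connection $d+\alpha$, which represents the local system $\exp(a)$ --- but it has to be said, since otherwise the claim that the two deformed structures ``coincide term-by-term'' and that the unit $e_T$ is literally closed for $\mu^1_{a,\exp(a)}$ does not parse. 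The convergence remark (positivity of valuation of $a$, Gromov compactness giving energy bounds) is correct and is indeed why $\BC$ is taken to be $H^1(T;\Lambda_{>0})$.
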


\subsection{Objects with fixed $K$-class}

Now let us fix a class $H \in K\left(\scrF(X)^{perf}\right)$, and consider the subset $\LS_H(A) \subset \LS(A)$ (respectively, $\BC_H \subset \BC$) consisting of objects with $[T^\ls_a] = H$ (respectively, $[T^\bc_a] = H$).

\begin{lem}
\label{lem:kclass_an}
The subset $\LS_H(A) \subset \LS(A)$ (respectively, $\BC_H \subset \BC$) is naively constructible. 
\end{lem}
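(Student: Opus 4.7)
The strategy is to combine Construction~\ref{cons:param_kclass}, which parametrizes all objects with $K$-class $H$ by a countable collection of affine varieties $\bA^H_j$, with a relative Chevalley-type elimination over the affinoid base $\LS(A)$. Since a countable union of naively constructible sets is naively constructible, it suffices to treat each $j$ separately, and the case of $\BC_H$ will follow by the identical argument with $T^\ls_a$ replaced by $T^\bc_a$ (or alternatively by pulling back via the analytic exponential $\exp : \BC \to \LS(A)$ and using Theorem~\ref{thm:bc_ls} to identify $\BC_H = \exp^{-1}(\LS_H(A))$).

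Fix $j$, and consider the $A_\infty$ subcategory of $\scrF(X)^{perf}$ containing the families $\{T^\ls_a\}_{a \in \LS(A)}$ and $\{V_b\}_{b \in \bA_j^H}$, together with the auxiliary split generators $\{L_i\}$ used in Section~\ref{Sec:ob_fam}. By Theorem~\ref{thm:fuk_trick} the structure maps are analytic in $a$, and by the construction of $\bA_j^H$ they are polynomial in $b$. I would then form the quasi-isomorphism incidence variety
\[
\widetilde{W}_j \,\subset\, \LS(A) \times \bA_j^H \times V,
\]
where $V$ is a finite-dimensional $\Lambda$-affine space of tuples $(f,g,s,t)$ with $f\in\hom^0(T^\ls_a,V_b)$, $g\in\hom^0(V_b,T^\ls_a)$, $s\in\hom^{-1}(V_b,V_b)$, $t\in\hom^{-1}(T^\ls_a,T^\ls_a)$, cut out by
\[
\mu^1(f)=0,\ \ \mu^1(g)=0,\ \ \mu^2(g,f)-e_{T^\ls_a}=\mu^1(t),\ \ \mu^2(f,g)-e_{V_b}=\mu^1(s).
\]
By Construction~\ref{cons:param_kclass}, $a \in \LS_H(A)$ iff there exist $j$ and $(b,f,g,s,t)$ with $(a,b,f,g,s,t) \in \widetilde{W}_j$, i.e.\ $\LS_H(A) = \bigcup_j \pi(\widetilde{W}_j)$, where $\pi$ denotes projection to $\LS(A)$.

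The defining equations of $\widetilde{W}_j$ are analytic in $a$ and polynomial in the remaining coordinates, so one can view $\widetilde{W}_j$ as an algebraic subvariety of an affine space over the ring $\mathcal{O}(\LS(A))$ of analytic functions on the affinoid $\LS(A)$. A relative form of Chevalley's theorem then shows that $\pi(\widetilde{W}_j)$ is a finite union of differences of analytic subvarieties of $\LS(A)$, and the countable union over $j$ is therefore naively constructible.

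The main obstacle is justifying this Chevalley step rigorously in the mixed analytic/algebraic setting, since the classical theorem concerns algebraic morphisms over a field rather than over an affinoid algebra. I would proceed by eliminating the polynomial coordinates one at a time via resultants (to eliminate an existential quantifier) and via the auxiliary-variable trick $xy=1$ (to rewrite inequalities $x\neq 0$ as equations); each elimination produces finitely many analytic subvarieties of $\LS(A)$ or their complements, and the process terminates after finitely many steps because there are only finitely many polynomial coordinates to eliminate.
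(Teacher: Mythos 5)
Your overall architecture (countable parametrizing affine varieties, an incidence set of witnesses, then a Chevalley-type projection to $\LS(A)$) is the same as the paper's, and your reduction of $\BC_H$ to the same argument is fine. But the step you flag as "the main obstacle" is a genuine gap, and the paper resolves it by a different and much cleaner device than the resultant/Rabinowitsch elimination you sketch: in the proof of Lemma \ref{lem:pointlike_LS_are_analytic} one observes that the finitely many defining polynomials in $(\delta,\wp^1,f,g)$ have only finitely many coefficients, hence only finitely many analytic functions of $a$ ever occur; these define an analytic map $\phi:\LS(A)\to\Lambda^N$, one replaces the analytic functions by the coordinates of $\Lambda^N$ to obtain a purely algebraic incidence variety $\tilde{Q}$ over the algebraically closed field $\Lambda$, applies the classical Chevalley theorem to $\mathrm{pr}_1(\tilde{Q})$, and then pulls the resulting constructible set back along $\phi$ (pullbacks of algebraic subvarieties under an analytic map are analytic). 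No elimination theory over the affinoid algebra $\mathcal{O}(\LS(A))$ is needed, and indeed your elimination-by-resultants sketch would not obviously terminate in a proof: resultants only control common zeros when leading coefficients do not degenerate, which is exactly the phenomenon Chevalley's theorem is designed to handle, so as written this step is missing rather than merely technical.

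There is a second, more structural problem with your incidence set: your homotopy witness $s\in\hom^{-1}(V_b,V_b)$ does not range over a finite-dimensional space. In the model of the split-closure used in the paper (homotopy idempotents realized as infinite twisted complexes, Section \ref{subsec:idemp}), only the morphism spaces between an object of $\scrQ^{tw}$ and an idempotent object are finite-dimensional in each degree; $\hom^{-1}(V_b,V_b)$ is an infinite band-matrix space, so your "finite-dimensional $\Lambda$-affine space of tuples $(f,g,s,t)$" is not finite-dimensional and the claim that "there are only finitely many polynomial coordinates to eliminate" fails. The paper avoids homotopy witnesses altogether: it first intersects Construction \ref{cons:param_kclass} with Construction \ref{cons:param_pt}, so the parametrized objects are point-like, and then only certifies that $T^\ls_a$ is a direct summand, via the chain-level equations $\mu^1(f)=0$, $\mu^1(g)=0$, $\mu^2(\mu^2(f,\wp^1),g)=e_a$. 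No homotopies are needed because $\hom^\bullet(T^\ls_a,T^\ls_a)$ is concentrated in non-negative degrees with rank one in degree zero (so "cohomologous to the unit" forces equality on the chain level), and since the point-like object $(V,\delta,\wp)$ has rank-one degree-zero endomorphisms, any nontrivial summand is automatically quasi-isomorphic to it, which upgrades "summand" to the quasi-isomorphism you need. With that formulation all variables $(\delta,\wp^1,f,g)$ lie in finite-dimensional spaces and the finite-type Chevalley argument above goes through; without some such device (point-likeness plus the summand trick, or an a priori bound replacing $s$), your incidence construction does not yield a constructible projection.
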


We will give the proof for local systems, as the argument for bounding cochains is identical. 
In order to prove Lemma \ref{lem:kclass_an}, we observe that the objects $T^\ls_a$ parametrized by points $a \in \LS(A)$ are point-like; combining Constructions \ref{cons:param_kclass} and \ref{cons:param_pt}, we have a countable set of affine varieties parametrizing quasi-isomorphism classes of point-like objects split-generated by the $\{L_i\}$ and having the given $K$-class. 
Therefore it suffices to prove the following:

\begin{lem} \label{lem:pointlike_LS_are_analytic}
Let $\bA$ be one of these affine varieties. 
Then the subset of points $a \in \LS(A)$ consisting of objects $T^\ls_a$ lying in one of the quasi-isomorphism classes parametrized by $\bA$, is a finite union of differences of analytic subvarieties.
\end{lem}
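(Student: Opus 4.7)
The plan is to express the quasi-isomorphism locus
\[Z = \{(a,p) \in \LS(A) \times \bA : T^\ls_a \simeq V_p\}\]
as the projection of a finite-dimensional subvariety cut out by polynomial-in-$p$ equations with analytic-in-$a$ coefficients, and then apply a parametric form of Chevalley's theorem to project further to $\LS(A)$.

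First, I would establish that the morphism complexes $\hom^\bullet(T^\ls_a, V_p)$ and $\hom^\bullet(V_p, T^\ls_a)$ have fixed underlying graded vector spaces (depending only on $\bA$), with differentials and $\mu^2$-operations whose matrix entries are analytic in $a$ and polynomial in $p$. Analyticity in $a$ follows from Theorem \ref{thm:fuk_trick}, combined with the fact that $V_p$ is built from the split-generators $\{L_i\}$ via a twisted complex with homotopy idempotent; polynomial dependence on $p$ follows from the construction of $\bA$, since $\delta_p$ and $\wp_p$ are polynomial in $p$ and the deformed $A_\infty$-operations \eqref{eqn:defmu1} involve only finitely many non-trivial terms.

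Next, I would characterise $T^\ls_a \simeq V_p$ via the existence of a tuple $(f, g, h, h')$ with $f \in \hom^0(T^\ls_a, V_p)$, $g \in \hom^0(V_p, T^\ls_a)$, $h \in \hom^{-1}(T^\ls_a, T^\ls_a)$, $h' \in \hom^{-1}(V_p, V_p)$ satisfying $\mu^1(f) = 0$, $\mu^1(g) = 0$, $\mu^1(h) = \mu^2(f, g) - e_{T^\ls_a}$, and $\mu^1(h') = \mu^2(g, f) - e_{V_p}$. The set of such tuples forms a subvariety $\tilde I \subset \LS(A) \times \bA \times W$ (where $W$ is the finite-dimensional $\Lambda$-vector space parametrising $(f, g, h, h')$) cut out by polynomial-in-$(p, f, g, h, h')$ equations with analytic-in-$a$ coefficients, and its image under projection to $\LS(A) \times \bA$ is $Z$. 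To project further to $\LS(A)$, I would invoke Chevalley's theorem: letting $\calA$ denote the Tate algebra of analytic functions on $\LS(A)$, which is a Noetherian Jacobson ring whose maximal ideals correspond to the points of $\LS(A)$, the subvariety $\tilde I$ defines a closed subscheme of the finite-type $\calA$-scheme $\Spec(\calA \otimes_\Lambda \Lambda[\bA \times W])$. Chevalley then guarantees that its scheme-theoretic image in $\Spec(\calA)$ is constructible, and restricting to closed points yields a finite union of differences of analytic subvarieties of $\LS(A)$, which coincides with the desired locus.

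The main obstacle will be the careful implementation of the Chevalley step. One must verify that $\calA$ is Noetherian and Jacobson (which holds because $A$ is a bounded rational polytope), and that constructible subsets of $\Spec(\calA)$ translate cleanly to finite unions of differences of analytic subvarieties of $\LS(A)$ upon passage to rigid-analytic points. It is equally essential that the quasi-isomorphism criterion be formulated via finitely many polynomial equations with only finitely many auxiliary variables, so that Chevalley produces a \emph{finite} union rather than a countable one.
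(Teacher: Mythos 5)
Your overall strategy (cut out an incidence locus by equations analytic in $a$ and algebraic in the remaining variables, then eliminate the auxiliary variables by Chevalley) is the same as the paper's, and your Chevalley step over the Tate algebra of $\LS(A)$ is a workable alternative to the paper's device of factoring the finitely many analytic coefficient functions through a map $\phi:\LS(A)\to\Lambda^N$ and applying classical Chevalley over $\Lambda$. But there is a genuine gap in the way you encode quasi-isomorphism. Your auxiliary variable $h'\in\hom^{-1}(V_p,V_p)$, and the equation $\mu^1(h')=\mu^2(g,f)-e_{V_p}$ it enters, do not live in a finite-dimensional space: $V_p$ is an idempotent-type object of $\scrC^{perf}$, i.e.\ an infinite (telescope-type) twisted complex $(V,V,V,\ldots)$, and in that model its endomorphism space in any fixed degree is an infinite band matrix space, hence infinite-dimensional; likewise $\mu^2(g,f)-e_{V_p}$ has infinitely many matrix entries, so the equation is an infinite system. (The paper is careful about exactly this point in Section \ref{Sec:K-theory}: only $\hom^k(X,(Y,\wp))$ and $\hom^k((Y,\wp),X)$ with $X$ an honest twisted complex are degreewise finite-dimensional, and the module-category model of the split-closure is rejected for the same reason.) Since you yourself note that finitely many equations in finitely many variables is essential for Chevalley to yield a \emph{finite} union of differences, the argument as written does not go through, and no choice of model obviously rescues the two-sided homotopy-equivalence criterion.

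The paper's proof avoids this by never using endomorphisms of $V_p$ at all. It takes $f\in\hom^0(V,T^\ls_a)$ and $g\in\hom^0(T^\ls_a,V)$, morphisms to and from the \emph{finite} twisted complex $V$, and imposes only the one-sided chain-level condition $\mu^2(\mu^2(f,\wp^1),g)=e_a$, which says $T^\ls_a$ is a direct summand of $(V,\delta,\wp)$. Two features of the situation then upgrade this: since $\hom^\bullet(T^\ls_a,T^\ls_a)$ is concentrated in non-negative degrees with rank one in degree zero, ``cohomologous to $e_a$'' forces equality on the chain level (so no homotopy variable $h$ is needed either); and since the objects parametrized by $\bA=\bA^{pt}_j$ are point-like, their degree-zero endomorphism algebra has rank one, so they admit no non-trivial idempotent decomposition and a non-zero summand is automatically quasi-isomorphic to the whole object. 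This is the missing idea in your proposal: replace the symmetric two-sided criterion by the one-sided summand criterion plus point-likeness, so that all auxiliary variables ($f$, $g$, $\wp^1$, $\delta$) range over finite-dimensional spaces and the defining equations are finite in number, after which your elimination step (or the paper's pullback under $\phi$) applies.
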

\begin{proof}
The points of $\bA$ parametrize pairs $(\delta,\wp)$, where $\delta$ is a Maurer--Cartan element for a fixed $V = \oplus L_{i_j}[\sigma_{i_j}]$, upper triangular with respect to a fixed filtration, and $\wp$ is a homotopy idempotent for the twisted complex $(V,\delta)$, satisfying appropriate constraints so that the object $(V,\delta,\wp)$ is point-like. 
By Theorem \ref{thm:fuk_trick}, we may assume (possibly after shrinking $A$, and making appropriate choices in the definition of the Fukaya category) that the vector spaces $\hom^0(V,T^\ls_a)$, $\hom^0(T^\ls_a,V)$ and $\hom^0(T^\ls_a,T^\ls_a)$ are independent of $a$, and the $A_\infty$ structure maps are analytic in $a$. Recall that  $\hom^\bullet(T^\ls_a, T^\ls_a) \simeq C^\bullet(F_{\val(a)};\Lambda)$ (since the endomorphisms of a rank one local system are trivial). We take a strictly proper chain-level model for the Fukaya category,  as in \cite{Seidel:HMSquartic}, in which this cochain complex is concentrated in non-negative degrees and has rank one in degree zero.  The identity cochain $e_a=e_a^{\ls}$ is  independent of the holonomy of the local system and depends on $a$ at most through rescaling by a function analytic in $a$. 

We consider the subset 
\[ Q \subset \LS(A) \times \bA \times \hom^0(V,T) \times \hom^0(T,V)\]
consisting of all $(a,(\delta,\wp),f,g)$ such that 
\begin{align*}
\mu^1_{\delta,a}(f) &= 0 \\
\mu^1_{a,\delta}(g) &= 0 \\
\mu^2_{a,\delta}(\mu^2_{a,\delta}(f,\wp^1),g)  &= e_a\quad \text{is the identity in $\hom^0(T^\ls_a,T^\ls_a)$.}
\end{align*}
The object $T^\ls_a$ is a direct summand of the object $(V,\delta,\wp)$ if and only if there exist $(f,g)$ such that $(a,\alpha,f,g)$ lies in $Q$ (using the fact that $\hom^*(T,T)$ is concentrated in non-negative degrees, so any element cohomologous to $e_a \in \hom^0(T^\ls_a,T^\ls_a)$ is $e_a$ itself). 
The rank of the degree-$0$ endomorphism algebra of $(V,\delta, \wp)$ is equal to $1$, so any non-trivial direct summand is quasi-isomorphic to $(V,\delta,\wp)$.
It follows that the desired subset of $\LS(A)$ is precisely the projection $\mathrm{pr}_1(Q)$, where $\mathrm{pr}_1$ denotes projection onto the first factor.

The defining equations of $Q$ are linear in $f$, $g$ and $\wp^1$, polynomial in $\delta$, and analytic in $a$ by Theorem \ref{thm:fuk_trick}. 
They do not involve $\wp^{\ge 2}$. 
We now observe that only finitely many analytic functions in $a$ can enter into these defining equations, as there are finitely many non-zero coefficients in each of the finitely-many defining polynomials in $\delta,f,g, \wp^1$.

Let us suppose that there are $N$ of these analytic functions. 
They define an analytic map $\phi: \LS(A) \to \Lambda^N$. 
We have a corresponding set 
\[ \tilde{Q} \subset  \Lambda^N \times \bA  \times \hom^0(V,T) \times \hom^0(T,V)\]
defined by equations which are linear in $f$, $g$ and $\wp^1$, polynomial in $\delta$, and linear in the coordinates on $\Lambda^N$ standing in for the analytic functions on $\LS(A)$, so that $Q$ is the preimage of $\tilde{Q}$ under the map $\phi \times \id \times\id\times \id$. 
It follows that $\mathrm{pr}_1(Q) = \phi^{-1}\!\left(\mathrm{pr}_1\!\left(\tilde{Q}\right)\right)$. 

Since $\tilde{Q}$ is an algebraic subvariety (and $\Lambda$ is an algebraically closed field), it follows by Chevalley's theorem that $\mathrm{pr}_1\!\left(\tilde{Q}\right)$ is a finite union of differences of algebraic subsets. 
Therefore $\mathrm{pr}_1(Q)$, being its pullback under the analytic map $\phi$, is a finite union of differences of analytic sets as required.
\end{proof}

\section{Matching points with tori}\label{Sec:K-theory}

Throughout this section we work in the setting of Theorem \ref{thm:non-spherical}:   $X$ is a symplectic $K3$ surface which is homologically mirror to a $K3$ surface $Y$ over $\Lambda$, and $T \subset X$ is a Maslov-zero Lagrangian torus which the homological mirror functor sends to the skyscraper sheaf $\mathcal{O}_y$ of a closed point $y \in Y$. 

We fix a finite set of objects $\{L_i\}$ of $\scrF(X)$ which split-generate $\scrF(X)^{perf}$, and denote the subcategory with those objects by $\scrV$. 
We denote the corresponding objects of $\EuD(Y)$ by $\{\mathcal{E}_i\}$, and denote the subcategory with those objects by $\scrW$.

\subsection{Families of points near $y$}

Recall that given the split-generators $\{L_i\}$ of $\scrF(X)^{perf}$, Construction \ref{cons:param_pt} produces a countable set of affine varieties $\bA^{pt}_j$ parametrizing all quasi-isomorphism classes of point-like objects of $\scrF(X)^{perf}$. 

\begin{lem} \label{lem:express_points_as_twisted_complexes}
There exists a Zariski-open neighbourhood $\bB$ of $y \in Y$, and a morphism of affine varieties $\chi:\bB \to \bA^{pt}_j$ for some $j$, such that $\mathcal{O}_b$ is quasi-isomorphic to the image of $ \chi(b)$ under the homological mirror functor, for all $b\in \bB$.
\end{lem}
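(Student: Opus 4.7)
The plan is to transport the problem to the $Y$-side, where the algebraicity of $Y$ can be exploited, and then push the resulting family back to the $X$-side via the mirror equivalence. One can arrange an $A_\infty$-quasi-equivalence from the subcategory $\scrW \subset \EuD(Y)$ on $\{\mathcal{E}_i\}$ to the subcategory $\scrV \subset \scrF(X)^{perf}$ on $\{L_i\}$ sending $\mathcal{E}_i \mapsto L_i$ on the nose, and its higher $A_\infty$-components make it polynomial in the data of twisted complexes with homotopy idempotents. It therefore induces a morphism of affine varieties from the $Y$-side analogue of $\bA^{pt}_j$ (built from $\{\mathcal{E}_i\}$) to the $X$-side $\bA^{pt}_j$, so it suffices to construct $\chi$ on the $Y$-side and post-compose with this transport map.

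To fix $j$, use that $\{\mathcal{E}_i\}$ split-generates $\EuD(Y)$ to present $\mathcal{O}_y$ as a point-like twisted complex with homotopy idempotent $(V, d_0, e_0)$ over the $\mathcal{E}_i$; this determines an index $j$ and a base point $a_0$. The family is then extracted from the tautological algebraic family of skyscraper sheaves on $Y$, namely the structure sheaf of the diagonal $\mathcal{O}_\Delta$ on $Y \times Y$, viewed through the first projection as a $Y$-family of objects of $\EuD(Y)$. Concretely, I want a Zariski-open $\bB \ni y$ together with a presentation of $\mathcal{O}_\Delta|_{\bB \times Y}$ as a twisted complex with homotopy idempotent built from $\mathcal{O}_\bB \boxtimes \mathcal{E}_i$, whose differentials and idempotent components have entries in $\mathcal{O}(\bB)$. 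The defining equations \eqref{eqn:MC} and \eqref{eqn:idempotent} being polynomial, such data is precisely a morphism of affine varieties $\bB \to \bA^{pt}_j$ (on the $Y$-side) whose fibre at $b$ is a presentation of $\mathcal{O}_b$, as desired.

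The main obstacle is the existence of this relative presentation. At the single point $y$ it is supplied by $(V, d_0, e_0)$, and the task is to extend algebraically over a Zariski neighborhood. This amounts to showing that the split-generation of $\EuD(Y)$ by $\{\mathcal{E}_i\}$ propagates to algebraic families of objects. It can be argued by a standard lifting-obstruction analysis: the higher components of $d$ and of the idempotent are upgraded inductively, with obstructions lying in sections of coherent $\Ext$-sheaves on the parameter variety which vanish on the fibre over $y$ and hence on a Zariski neighborhood by semicontinuity. A technical subtlety is that a homotopy idempotent consists of the infinite sequence $\{\wp^d\}_{d \geq 1}$, but only finitely many are nontrivial because $\hom^\bullet_{\scrW^{tw}}(V, V)$ is finite-dimensional by strict properness of $\EuD(Y)$; this reduces the construction to a finite system of polynomial conditions, as already exploited in Construction~\ref{cons:split_count}.
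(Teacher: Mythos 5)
Your first step (transporting twisted complexes with homotopy idempotents through a quasi-equivalence $\scrW \to \scrV$ sending $\mathcal{E}_i \mapsto L_i$, and observing this gives a polynomial map of the parametrizing affine varieties) is sound and is essentially the paper's functoriality step, and your instinct to get the family from $\mathcal{O}_\Delta$ points in the right direction. But the crux of the lemma is exactly the step you leave to a ``standard lifting-obstruction analysis'', and as sketched that step fails. You want to extend the single presentation $(V,d_0,e_0)$ of $\mathcal{O}_y$ to a presentation of $\mathcal{O}_\Delta|_{\bB\times Y}$ with coefficients in $\mathcal{O}(\bB)$, and you justify the extension by saying the obstructions are ``sections of coherent $\Ext$-sheaves which vanish on the fibre over $y$ and hence on a Zariski neighbourhood by semicontinuity''. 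That principle is not true: a section of a coherent sheaf that vanishes at a point need not vanish on any neighbourhood, semicontinuity of fibre dimensions of $\Ext$-sheaves says nothing about the vanishing of a specific class, and the relevant $\Ext$-sheaves here do not have zero fibre at $y$ (the objects are point-like, with nontrivial $\Ext^1$ and $\Ext^2$), so Nakayama-type arguments do not apply either. At best an order-by-order lifting produces a formal solution over the completed local ring at $y$; promoting that to a genuine Zariski-open (or even \'etale) neighbourhood needs an algebraization/approximation input which you do not supply, and you also never specify a DG model in which the morphism complexes $\hom(\mathcal{E}_i,\mathcal{O}_b)$ vary algebraically (indeed are finite free $\mathcal{O}(\bB)$-modules) so that the equations you want to solve even make sense over $\mathcal{O}(\bB)$.

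The paper avoids obstruction theory entirely at this point. It takes a \v{C}ech model and shrinks $\bB$ to an affine open contained in all cover elements on which the $\mathcal{E}_i^\vee$ are free, so that $b \mapsto \hom^\bullet(\mathcal{E}_i,\mathcal{O}_b)$ becomes a \emph{strictly proper} family of right $\scrW$-modules (finite free at the cochain level). It then invokes homological smoothness of $\scrW$: a fixed homotopy idempotent on a fixed twisted complex of Yoneda bimodules resolves the diagonal bimodule, and tensoring the family of modules with this resolution (Lemma \ref{lem:build_tw_cpx}) produces a family of twisted complexes with homotopy idempotents whose dependence on $b$ is manifestly algebraic -- no pointwise presentation ever needs to be extended, so no obstruction classes arise. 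If you replace your lifting argument with this ``resolution of the diagonal'' mechanism (which is the categorical shadow of the Beilinson-type resolution of your $\mathcal{O}_\Delta$), your outline becomes essentially the paper's proof; without it, the key step is unsupported.
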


In order to prove Lemma \ref{lem:express_points_as_twisted_complexes}, it will help to start with a preliminary result about families of modules. 

We start with some background. 
Let $\bB$ be an affine variety and $\scrV$ a DG category. 
Recall the notion of a \emph{family of (right) DG $\scrV$-modules parametrized by $\bB$}: it is an $\mathcal{O}_\bB$-linear DG functor $\scrV^{op} \otimes \mathcal{O}_\bB \to \cC$, where $\cC$ is the DG category of cochain complexes of projective  $\mathcal{O}_{\bB}$-modules. 
Note that if $\scrM$ is a family of DG $\scrV$-modules parametrized by $\bB$ and $b \in \bB$, the specialization $\scrM_b$ is an ordinary DG $\scrV$-module.
We will say that a family of DG $\scrV$-modules $\scrM$ is \emph{classified by} a morphism $f:\bB \to \bA^\pi_j$ to one of the affine varieties from Construction \ref{cons:split_count}, if $\scrM_b$ is quasi-isomorphic to the abstract image of the homotopy idempotent $\wp_{f(b)}$, for all $b \in \bB$. 
 
We will call a family of DG $\scrV$-modules \emph{strictly proper} if the corresponding functor lands inside $\cC^{fin}$, the cochain complexes which are free and finitely-generated on the cochain level. 

\begin{lem}
\label{lem:build_tw_cpx}
Suppose that $\scrV$ is a homologically smooth DG category, and $\scrM$ a strictly proper family of DG $\scrV$-modules parametrized by the affine variety $\bB$. 
Then $\scrM$ is classified by a morphism $f:\bB \to \bA^\pi_j$.
\end{lem}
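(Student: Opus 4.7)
The plan is to invoke the homological smoothness of $\scrV$ in the following concrete form: the diagonal $\scrV$-$\scrV$-bimodule $\Delta_\scrV$ is a perfect bimodule. Hence there exists a finite twisted complex $(R,\delta_R)$ over bimodules of Yoneda type (bimodules of the form $\scrV(-,L_i)\otimes_\Lambda \scrV(L_j,-)$), together with a homotopy idempotent $\wp_R$ on $(R,\delta_R)$ whose abstract image is quasi-isomorphic to $\Delta_\scrV$. This resolution is the universal tool for trading the identity for a finite twisted complex, which is exactly what is needed to land in an $\bA^\pi_j$.

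First I would tensor the data $(R,\delta_R,\wp_R)$ on the appropriate side with the family $\scrM$. Each Yoneda-type term $\scrV(-,L_i)\otimes_\Lambda\scrV(L_j,-)$ contributes, after tensoring, a summand $\scrM(L_j)\otimes_\Lambda\scrV(-,L_i)$. Because $\scrM$ is strictly proper, each $\scrM(L_j)$ is a bounded complex of finitely-generated free $\mathcal{O}_\bB$-modules whose ranks are independent of $b\in\bB$, so the underlying ``type'' $(V,f)$ of the resulting twisted complex over $\scrV$ is uniform over $\bB$. The tensored structure maps $\delta(b)$ and idempotent data $\wp(b)$ are $\mathcal{O}_\bB$-linear composites of the structure maps of $R$ with those of $\scrM$ (which are themselves $\mathcal{O}_\bB$-linear on the cochain level). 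They satisfy the Maurer--Cartan equation and the homotopy idempotent equations of Section~\ref{subsec:idemp} because $(R,\delta_R,\wp_R)$ does, and these equations reduce to finite polynomial relations thanks to the finiteness of $R$. The assignment $b\mapsto(\delta(b),\wp(b))$ therefore defines a morphism of affine varieties $f:\bB\to\bA^\pi_j$ for the appropriate index $j$.

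Second I would verify the classifying condition. Fibre-wise over $b\in\bB$, the abstract image of $\wp(b)$ on $(V,\delta(b))$ is
\[
\scrM_b\otimes_\scrV (R,\delta_R,\wp_R)\;\simeq\;\scrM_b\otimes_\scrV\Delta_\scrV\;\simeq\;\scrM_b,
\]
using that $(R,\wp_R)$ resolves $\Delta_\scrV$ as a bimodule and that the (derived) tensor product with the diagonal is the identity on perfect modules. This is exactly the condition in the definition of ``classified by $f$''.

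\textbf{Main obstacle.} The technical delicacy is to make precise that the ``tensoring $\scrM$ with a bimodule twisted complex equipped with a homotopy idempotent'' operation actually lands inside the affine variety $\bA^\pi_j$, rather than merely inside its ambient vector space: one needs to check that the finite filtration $F^{\ge *}$ from the twisted complex structure of $R$ pulls back to a finite filtration of the correct shape on the endomorphism algebra of the result, so that the Maurer--Cartan and idempotent sums truncate to the polynomial equations cutting out $\bA^\pi_j$. A second step, which is a standard input rather than an obstruction, is to know that homological smoothness of $\scrV$ actually delivers such a finite Yoneda-type bimodule resolution $R$ together with a strict homotopy idempotent $\wp_R$ with image quasi-isomorphic to $\Delta_\scrV$; this is obtained from a truncated bar construction combined with the compactness of $\Delta_\scrV$ in the derived category of bimodules.
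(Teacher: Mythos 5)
Your proposal is correct and follows essentially the same route as the paper: both exploit homological smoothness as a finite twisted complex of Yoneda-type bimodules with a homotopy idempotent resolving the diagonal, tensor $\scrM$ with it (using strict properness so the result is a genuine finite twisted complex over $\scrV$, uniform in $b\in\bB$), and conclude via $\scrM_b\otimes_\scrV\scrV_\Delta\simeq\scrM_b$ that the resulting family of pairs $(\delta(b),\wp(b))$ gives the classifying morphism to $\bA^\pi_j$. The paper packages the same argument as a homotopy-commutative diagram of DG functors together with the universality of the varieties $\bA^\pi_j$ (Remark \ref{rmk:param_pi}), which handles the finiteness and filtration bookkeeping you flag as the main obstacle.
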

\begin{proof}
All functors, modules and categories in the proof are assumed to be DG. 
We start by considering the case that $\bB = \spec \BbK$ is a point, so $\scrM$ is simply a  $\scrV$-module. 
We recall some background on categories of DG modules, cf. \cite{Seidel2008c} and \cite[Section 2]{Ganatra2013}. 

The category of left $\scrV$-modules is denoted $\scrV\lmod$, the category of right $\scrV$-modules is $\rmod\scrV$, and the category of $\scrV\text{-}\scrV$-bimodules is $\scrV\bimod\scrV$ (the notation follows \cite[Section 2]{Ganatra2013}, but note that we only consider DG modules -- these form a quasi-equivalent full subcategory of the category of $A_\infty$ modules considered in \emph{op. cit.}). 
We have left and right Yoneda functors
\[ \yon^r: \scrV \to \rmod\scrV,\quad \yon^\ell: \scrV^{op} \to \scrV\lmod,\]
a tensor product functor
\[ \scrV \lmod \otimes \rmod\scrV  \to \scrV\bimod \scrV,\]
convolution functors given by tensoring with a (bi-)module, e.g.
\[ \scrM \otimes_\scrV (-): \scrV \bimod \scrV \to \rmod \scrV\]
(cf. \cite[Section 2]{Seidel2008c} -- note that the bar resolution provides a canonical way of taking an appropriately derived tensor product, but this remains a DG rather than $A_\infty$ functor), and a quasi-isomorphism of functors
\[ \scrM \otimes_\scrV \yon^\ell(-) \xrightarrow{\sim} \scrM(-)\] 
(given by the augmentation $\scrM(Y) \otimes \hom(X,Y) \to \scrM(X)$ of the bar resolution).

Combining these pieces, we obtain a homotopy-commutative diagram of functors
\[\xymatrixcolsep{5pc}\xymatrix{ \scrV^{op} \otimes \scrV \ar[r]^-{\yon^\ell \otimes \yon^r} \ar[d]_-{\scrM \otimes \id} & \scrV\bimod\scrV  \ar[d]^-{\scrM \otimes_\scrV (-)} \\
\cC \otimes \scrV \ar[r]^-{\otimes_\BbK \circ (\id \otimes \yon^r )} & \rmod\scrV.}\]

We now take twisted complexes in this diagram. 
We also observe that there is a `tensor with a finite-dimensional cochain complex' functor 
\[ \cC^{fin} \otimes \scrV \to \scrV^{tw}\]
(however note that the functor does not exist if we replace $\cC^{fin}$ with $\cC$ due to the finiteness constraint on twisted complexes). 
Using the fact that $\scrM$ is strictly proper, we obtain another diagram of DG functors
\[\xymatrix{ \left(\scrV^{op} \otimes \scrV\right)^{tw} \ar[r] \ar[d] & \left(\scrV\bimod\scrV\right)^{tw} \ar[d] \\
\scrV^{tw} \ar[r] & \left(\rmod\scrV\right)^{tw},}\]
commutative up to homotopy. 

Now by the definition of homological smoothness of $\scrV$, there exists a twisted complex $(C,\delta) \in \left(\scrV^{op} \otimes \scrV\right)^{tw}$, together with a homotopy idempotent $\wp$, whose image in $(\scrV\bimod\scrV)^{tw}$ represents the diagonal bimodule $\scrV_\Delta$. 
Since $\scrM \otimes_\scrV  \scrV_\Delta \simeq \scrM$, it follows by commutativity that the image of this homotopy idempotent in $\scrV^{tw}$ represents $\scrM$. 

Thus, given a resolution of the diagonal, we obtain a natural way of forming a twisted complex together with a homotopy idempotent representing a given strictly proper $\scrV$-module. 
We now apply this to a strictly proper family of $\scrV$-modules $\scrM$ over an arbitrary affine variety $\bB$. 
We obtain a family of twisted complexes with homotopy idempotents representing the fibres; and this family must be pulled back from one of the affine varieties $\bA^\pi_j$ by the universality of the construction of the latter (see Remark \ref{rmk:param_pi}).
\end{proof}

\begin{proof}[Proof of Lemma \ref{lem:express_points_as_twisted_complexes}]
Since $Y$ is a smooth variety, we may assume that the split-generators $\mathcal{E}_i$ are bounded complexes of locally free sheaves of finite rank. 

To define our DG enhancement $\EuD(Y)$ of the bounded derived category $D^b Coh(Y)$, we use a \v{C}ech model following \cite[Section 5]{Seidel:HMSquartic}, 
i.e., we choose a finite affine open cover $\mathfrak{U}$ of $Y$ and set 
\[\hom^\bullet_{\EuD(Y)}(\mathcal{E},\mathcal{F}) \coloneqq \check{C}^\bullet(\mathfrak{U},\scrHom(\mathcal{E},\mathcal{F})).\]

We may assume that $y$ lies in the common intersection of the affine open sets in our cover. 
The complexes $\mathcal{E}_i^\vee$ are locally free of finite rank, so their restrictions to a sufficiently small open affine neighbourhood $\bB$ of $y$ are free. 
Shrinking $\bB$ further, we may assume that it is contained in the common intersection of the sets in our cover. 
If $\iota: \bB \hookrightarrow Y$ is the inclusion, then the complexes $\check{C}^\bullet(\mathfrak{U},\iota_*\iota^*\mathcal{E}_i^\vee)$ are free $\mathcal{O}_\bB$-modules of finite rank, whose fibres over a point $b \in \bB$ coincide with $\hom^\bullet_{\EuD(Y)}(\mathcal{E}_i,\mathcal{O}_b)$. 

Thus, we have constructed a strictly proper family of right DG $\scrW$-modules parametrized by $\bB$, representing the Yoneda images of the skyscraper sheaves $\mathcal{O}_b$ for $b \in \bB$. 
Since $\scrW$ is a homologically smooth DG category we may apply Lemma \ref{lem:build_tw_cpx}, to obtain a family of homotopy idempotents of twisted complexes over $\scrW$ parametrized by $\bB$. 
Twisted complexes and homotopy idempotents are functorial under $A_\infty$ functors (see \cite[Section 3m]{Seidel:FCPLT} for twisted complexes; homotopy idempotents are functorial because they can be regarded as non-unital $A_\infty$ functors from the ground field \cite[Section 4b]{Seidel:FCPLT}, which can be post-composed with any $A_\infty$ functor), so applying the homological mirror functor we obtain a corresponding family of homotopy idempotents of twisted complexes over $\scrV$. 
The objects represented by points $b \in \bB$ are all clearly point-like, so this family is classified by a morphism $\bB \to \bA_j^{pt}$, by the universality of the construction of the latter (see Remark \ref{rmk:param_pi}).
\end{proof}

\subsection{Versality of the family of points}

It is well-known that the family of skyscraper sheaves of points is a versal family, cf. \cite[Section 5]{ELO:III}. 
In particular we have:

\begin{lem}\label{lem:ptsvers}
The resulting obstruction map
\[ T_y \bB \xrightarrow{o} \Hom^1_{\scrF(X)^{perf}}(\chi(y),\chi(y))\]
is an isomorphism.
\end{lem}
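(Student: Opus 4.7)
The plan is to transport the obstruction computation to the algebraic side via the mirror equivalence, and then invoke the classical fact that the tautological family of skyscraper sheaves on a smooth surface is infinitesimally versal at every closed point.

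First, I would observe that by the construction in the proof of Lemma~\ref{lem:express_points_as_twisted_complexes}, the family of twisted complexes with homotopy idempotents over $\scrV$ parametrized by $\bB$ is obtained by applying the homological mirror functor (together with Lemma~\ref{lem:build_tw_cpx}) to a strictly proper family of DG $\scrW$-modules whose specialization at $b \in \bB$ is quasi-isomorphic to the Yoneda module $\yon(\mathcal{O}_b)$. Since the obstruction map is a derived invariant --- compatible with $A_\infty$ equivalences and with replacement of objects by quasi-isomorphic ones, up to the induced isomorphisms on $\Hom^1$ --- the map $o$ is identified with the obstruction map for the tautological family of skyscraper sheaves on $\bB \subset Y$ at $y$,
\[
o': T_y Y \longrightarrow \Ext^1_{\EuD(Y)}(\mathcal{O}_y,\mathcal{O}_y).
\]

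Next I would recognise $o'$ as the classical Kodaira--Spencer map of the skyscraper family. Since $Y$ is a smooth surface, a local Koszul resolution of $\mathcal{O}_y$ computes
\[
\Ext^i_{\EuD(Y)}(\mathcal{O}_y,\mathcal{O}_y) \,\cong\, \bigwedge\nolimits^i T_y Y,
\]
so both source and target of $o'$ have dimension $2 = \dim Y$, and it suffices to prove injectivity. Given $0 \neq v \in T_y Y$, the associated map $\Spec \BbK[\epsilon]/(\epsilon^2) \hookrightarrow Y$ pulls back the universal family of skyscrapers to a square-zero self-extension of $\mathcal{O}_y$ whose scheme-theoretic support is the non-reduced length-two subscheme of $Y$ determined by $v$; this extension is manifestly non-split (its support distinguishes it from $\mathcal{O}_y\oplus\mathcal{O}_y[\epsilon]$), so it represents a nonzero class in $\Ext^1(\mathcal{O}_y,\mathcal{O}_y)$ and $o'(v)\neq 0$. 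This is the standard fact cited as \cite[Section 5]{ELO:III}.

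The main obstacle is the first step: verifying carefully that the various derived constructions entering the definition of $\chi$ --- the \v{C}ech DG enhancement, the convolution with a twisted-complex resolution of the diagonal bimodule used in Lemma~\ref{lem:build_tw_cpx}, and the $A_\infty$-functorial passage from modules to idempotents of twisted complexes --- combine to give a family whose obstruction map really does agree, up to the induced isomorphism on $\Hom^1$, with the Kodaira--Spencer map for $\{\mathcal{O}_b\}_{b\in\bB}$. Since each ingredient is a DG or $A_\infty$ functor and the obstruction map depends only on the cohomology class of $v(\delta)$, this reduces to a bookkeeping check that requires no further geometric input.
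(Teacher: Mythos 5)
Your proposal is correct and follows the same overall strategy as the paper: transport the obstruction map through the constructions of Lemma \ref{lem:express_points_as_twisted_complexes} back to the family of $\scrW$-modules representing the skyscraper sheaves, and reduce to the classical fact that $T_yY \to \Ext^1(\mathcal{O}_y,\mathcal{O}_y)$ is an isomorphism for a point on a smooth surface. Where you differ is in how the two halves are carried out. For the classical fact, the paper simply cites the connecting homomorphism of $0 \to T^*_yY \otimes \mathcal{O}_y \to \mathcal{O}_Y/\mathcal{I}_y^2 \to \mathcal{O}_y \to 0$ being an isomorphism, while you give a self-contained argument (Koszul dimension count plus non-splitness of the first-order deformation with length-two support); both are fine. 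For the identification of the categorical obstruction map with this classical map -- which you rightly flag as the main obstacle and then defer to ``bookkeeping'' -- the paper supplies a concrete device: it tensors the family $\scrM$ with the short exact sequence $0 \to (\fm/\fm^2)\otimes \mathcal{O}_\bB/\fm \to \mathcal{O}_\bB/\fm^2 \to \mathcal{O}_\bB/\fm \to 0$ to get a \emph{naive short exact sequence} of $\scrW$-modules representing the Yoneda image of the ideal-square sequence, and then compares connecting morphisms: the canonical splitting $f \mapsto f(y)$ identifies the connecting morphism on the chain level with the obstruction map of $\scrM$, while on the other hand the connecting morphism is the Yoneda image of the classical isomorphism. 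Your appeal to derived invariance of obstruction maps under DG/$A_\infty$ functors and quasi-isomorphic replacement is the right principle (it is also how the paper passes from $\rmod\scrW$ to $\scrW^{perf}$ and across the mirror functor), but the naive-SES comparison is the content that actually pins down the obstruction map of the module family as the Kodaira--Spencer map, so your write-up would need that (or an equivalent chain-level comparison) to be complete; there is no error, just a step left at the level of assertion that the paper makes explicit.
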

\begin{proof}
This is a reflection of the fact that the homomorphism
\begin{equation}
\label{eqn:obspt}
 T_y Y \to \Ext^1(\mathcal{O}_y,\mathcal{O}_y),
 \end{equation}
arising from the connecting homomorphism for the short exact sequence of coherent sheaves
\begin{equation}
\label{eqn:sespt}
 0 \to T^*_y Y \otimes \mathcal{O}_y \to \mathcal{O}_Y/\mathcal{I}_y^2 \to \mathcal{O}_y \to 0,
 \end{equation}
 is an isomorphism. 
 
To make the connection we use the notion of a \emph{naive short exact sequence} of $\scrW$-modules
\[ 0 \to \scrP \to \scrQ \to \scrR \to 0,\]
which consists of the inclusion $\scrP \subset \scrQ$ of a submodule, i.e. a subspace $\scrP(W) \subset \scrQ(W)$ for each object $W$ preserved by all structure maps, together with the projection to the quotient module  $\scrR$. 
A choice of splittings of the short exact sequences of vector spaces determines a connecting morphism in $\hom^1_{\rmod\scrW}(\scrR,\scrP)$, whose cone is strictly isomorphic to $\scrQ$, cf. \cite[Section 2]{Seidel2008c} and \cite[Section 3p]{Seidel:FCPLT}. 

Let $\scrM$ denote the strictly proper family of $\scrW$-modules parametrized by $\bB$ that was constructed in the proof of Lemma \ref{lem:express_points_as_twisted_complexes}, representing the Yoneda images of skyscraper sheaves of points $b \in B$. 
Taking the tensor product of $\scrM$ with the short exact sequence of $\mathcal{O}_\bB$-modules
\begin{equation}
\label{eqn:OBses} 0 \to \left(\fm /\fm^2\right) \otimes \mathcal{O}_\bB/\fm \to \mathcal{O}_\bB/\fm^2 \to \mathcal{O}_\bB/\fm \to 0,
\end{equation}
we obtain a naive short exact sequence of $\scrW$-modules representing the Yoneda image of the short exact sequence of coherent sheaves \eqref{eqn:sespt}.

The connecting homomorphism is an element of $\Hom^1_{\rmod\scrW}(\scrM_y,\fm/\fm^2 \otimes \scrM_y)$, which can be regarded as a homomorphism
\[ T_y\bB \to \Hom^1_{\rmod\scrW}(\scrM_y,\scrM_y).\]
It is routine to verify that this coincides with the Yoneda image of the isomorphism \eqref{eqn:obspt}, and hence is an isomorphism. 
On the other hand, there is a canonical splitting for the short exact sequence \eqref{eqn:OBses} given by the map $\mathcal{O}_\bB/\fm \to \mathcal{O}_\bB/\fm^2$ sending $f \mapsto f(y)$; this determines a splitting of the short exact sequence of $\scrW$-modules, and it is routine to verify that the corresponding connecting morphism coincides on the chain level with the obstruction map for the family of modules $\scrM$. 
Hence the latter is an isomorphism, and therefore so is the obstruction map for the corresponding family of objects of $\scrW^{perf}$.
\end{proof}

\subsection{Families of bounding cochains on $T$}

The next result uses Artin's approximation theorem \cite[Theorem 1.2]{Artin}, which says roughly that if an analytic system of equations admits a formal solution, then it admits an analytic solution. 
Precisely, consider a finite collection $\{f_i(x,y)\}_{1\leq i\leq N}$ of convergent analytic functions in variables $x  \in \Lambda^n$, $y \in \Lambda^m$. 
Then we have:

\begin{thm}[Artin] \label{thm:Artin}
If there is a formal (power series) solution $q(x)$ of  $\{f_i(x,q(x)) = 0\}_{1\leq i\leq N}$, with $q(0) = 0$, then there is a convergent solution $Q(x)$. Moreover, for any $k$, there exists a convergent solution whose $k$-jet coincides with that of the formal solution.
\end{thm}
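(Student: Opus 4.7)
The plan is to follow Artin's original strategy from \cite{Artin}: first reduce the convergent problem to one about polynomial equations in $y$, then handle the polynomial case by Weierstrass preparation and a nested induction.

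For the first reduction, I would absorb the $k$-jet matching requirement at the outset by writing $q(x) = q_{<k}(x) + x^k r(x)$ and substituting $y = q_{<k}(x) + x^k z$, obtaining a convergent system $g_i(x,z) = 0$ with formal solution $z = r(x)$ vanishing at the origin; any convergent solution $Z(x)$ then yields the desired $Q(x) = q_{<k}(x) + x^k Z(x)$ matching $q$ to order $k$. Next I would expand each $g_i$ as a power series in $z$ and truncate at sufficiently large $z$-degree. The discarded higher-order $z$-terms vanish to very high order in $x$ when one substitutes any $z$ vanishing to appropriate order, so an implicit-function iteration (essentially Newton's method, applied after establishing some minor for the Jacobian in $z$) lifts a sufficiently accurate polynomial solution to an exact solution of the full convergent system.

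For the polynomial case, I would induct on the lexicographic pair (number of unknowns $m$, maximum total degree in $z$). After a generic linear change in $z$, the Weierstrass preparation theorem factors one of the equations as $u \cdot p$, where $u$ is a unit in the ring of convergent series and $p$ is a distinguished polynomial in $z_m$ of some degree $d$ with coefficients convergent in $(x, z_1, \ldots, z_{m-1})$ and vanishing at the origin. The Weierstrass division theorem then replaces the remaining equations by remainders of $z_m$-degree strictly less than $d$. One may now either eliminate $z_m$ via resultants or treat it as an auxiliary polynomial unknown of bounded degree: either move strictly decreases the chosen invariant, the original formal solution descends to a formal solution of the reduced system, and induction produces a convergent solution, which lifts back through the Weierstrass factorisation using the analytic implicit function theorem applied to $p(x, z_1, \ldots, z_m) = 0$ at the known convergent values of $(z_1, \ldots, z_{m-1})$.

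The hard part is the inductive step: making the reduction strictly decrease a well-chosen invariant without destroying the formal solution. The delicacy lies in ensuring that the coordinate change putting an equation into Weierstrass-regular form is compatible with the vanishing $q(0)=0$, that the distinguished polynomial produced has coefficients vanishing at the origin (so Weierstrass preparation is legitimate), and that the remainders really are simpler in the chosen lexicographic sense rather than merely rearranged. Handling these points simultaneously is precisely the substance of Artin's argument, and it requires tracking both parts of the lexicographic invariant together rather than reducing them one at a time.
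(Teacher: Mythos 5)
You are being asked to prove a statement the paper itself does not prove: Theorem \ref{thm:Artin} is quoted verbatim from \cite[Theorem 1.2]{Artin} and used as a black box, so there is no in-paper argument to compare against. Your sketch therefore has to stand on its own as a proof of Artin's theorem, and as such it has a genuine gap: both places where you invoke an implicit-function-type statement presuppose exactly the nondegeneracy that the theorem is designed to dispense with. In the reduction to polynomial equations you propose Newton iteration ``after establishing some minor for the Jacobian in $z$''; but in general no minor of $\partial f/\partial y$ is nonvanishing along the formal solution, and producing a usable one requires replacing the system by generators of a suitable prime ideal, invoking the Jacobian criterion (this is where characteristic zero enters), and then correcting an approximate solution modulo the \emph{square} of the chosen minor via Tougeron's implicit-function lemma. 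That is the core of Artin's argument, not a preliminary remark.

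The same problem recurs at the end of your induction: lifting back through the Weierstrass factorisation by applying the analytic implicit function theorem to $p(x,z_1,\ldots,z_m)=0$ requires $\partial p/\partial z_m$ to be nonzero at the point in question, i.e.\ the formal solution must be a simple root of the distinguished polynomial, which is precisely what fails in the singular situations where approximation is nontrivial. Elimination by resultants has the dual defect: a convergent solution of the eliminated system is not guaranteed to lift to a solution of the original one, so the backward implication in your induction is not secured. Note also that Artin's own induction runs on the number of independent variables $x_1,\ldots,x_n$, not on the unknowns: Weierstrass preparation is applied in $x_n$ to the Jacobian minor evaluated along the formal solution, and the unknowns are re-expanded as polynomials in $x_n$ of bounded degree whose coefficients become new unknowns over the convergent series ring in $x_1,\ldots,x_{n-1}$ (the ``nested'' trick), after which the inductive hypothesis and Tougeron's lemma finish the argument. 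Your lexicographic induction on (number of unknowns, degree in $z$) does not obviously terminate with the two-way solvability correspondences intact, and without the simple-root hypothesis it cannot be closed as written. For the purposes of the paper none of this machinery is needed, since the theorem is simply cited; but as a proof attempt the sketch omits the decisive ideas.
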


We combine Artin's theorem with the formal deformation theory from Section \ref{subsec:vers} to prove:

\begin{lem}\label{lem:local_one_dim}
There exists an analytic neighbourhood $\bB_\epsilon$ of $y \in \bB$, together with an analytic map $\eta: \bB_\epsilon \to \BC$ sending $y$ to $0$, which is an analytic isomorphism onto a neighbourhood of $0 \in \BC$, such that the object $\mathcal{O}_z$ is quasi-isomorphic to the image of $T^\bc_{\eta(z)}$ under the homological mirror functor, for all $z\in \bB_{\varepsilon}$. 
\end{lem}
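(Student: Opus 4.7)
The plan is to match the bounding-cochain family of deformations of $T$ with the family of skyscrapers near $y$: first formally via Lemma \ref{lem:versdef}, and then analytically via Artin's approximation theorem. Both families will be shown to be versal at their basepoints, so Lemma \ref{lem:versdef} will produce a formal ring isomorphism between their parameter spaces together with a compatible formal quasi-isomorphism. Theorem \ref{thm:Artin} will then upgrade this to an analytic solution, and the non-Archimedean inverse function theorem will convert it into the required $\eta$.

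I would first verify versality on both sides. The skyscraper family is encoded by a Maurer--Cartan element $\epsilon$ parametrized by $S := \widehat{\mathcal{O}}_{y,\bB} \cong \Lambda\power{s_1,s_2}$, pulled back from the family on $\bA^{pt}_j$ along the algebraic map $\chi$ of Lemma \ref{lem:express_points_as_twisted_complexes}; it is versal at $y$ by Lemma \ref{lem:ptsvers}. On the bounding-cochain side, choose a basis $e_1, e_2$ of $H^1(T;\Lambda)$ and set $\delta := x_1 e_1 + x_2 e_2$ over $R := \Lambda\power{x_1,x_2}$: formality of $\hom^\bullet(T,T)$ makes the Maurer--Cartan equation trivial, and the obstruction map is the tautological identification $H^1(T;\Lambda) \simeq \Hom^1(T,T)$, so $\delta$ is versal at $0$. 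With the hypothesized quasi-isomorphism between $T$ and $\chi(y)$ supplying the initial datum $f_0$, Lemma \ref{lem:versdef} produces a formal ring isomorphism $\eta^{\mathrm{fml}}: R \to S$ together with a $\mu^1$-closed formal morphism $f^{\mathrm{fml}} \in \hom^0(T,\chi(y)) \hat\otimes S$ reducing to $f_0$ modulo $\fm_S$.

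Next I would upgrade $(\eta^{\mathrm{fml}}, f^{\mathrm{fml}})$ to a convergent solution. The $A_\infty$ operations involving $T^\bc_a$ are analytic in $a \in \BC_\varepsilon$ (Section \ref{Sec:tori}), and $\epsilon$ is polynomial in local coordinates $s$ on $\bB$ near $y$ because $\chi$ is a morphism of affine varieties. Hence the cocycle equation $\mu^1_{\delta(\eta(s)),\epsilon(s)}(f(s)) = 0$, whose infinite sum converges by completeness of the energy filtration, forms a finite system of convergent analytic equations in the finite-dimensional unknowns $\eta \in H^1(T;\Lambda)$ and $f \in \hom^0(T,\chi(y))$ with analytic coefficients in $s$. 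Theorem \ref{thm:Artin} then produces an analytic solution $(\eta, f)$ whose $1$-jet at $y$ agrees with $(\eta^{\mathrm{fml}}, f^{\mathrm{fml}})$. Since $\eta^{\mathrm{fml}}$ is a formal isomorphism, its linear part at $y$ is invertible, so the non-Archimedean inverse function theorem makes $\eta$ an analytic isomorphism from some neighborhood $\bB_\varepsilon$ of $y$ onto a neighborhood of $0 \in \BC$. For $z \in \bB_\varepsilon$, the specialization $f(z)$ is a closed chain-level morphism $T^\bc_{\eta(z)} \to \chi(z)$ whose value at $z=y$ is the quasi-isomorphism $f_0$; by lower-semicontinuity of the rank of the induced map on cohomology, $f(z)$ remains a quasi-isomorphism after possibly shrinking $\bB_\varepsilon$, giving the conclusion.

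I expect the main obstacle to be packaging the deformation-theoretic matching as a genuine finite analytic system in finitely many unknowns, so that Artin's theorem applies. This requires combining the analyticity of the $T^\bc_a$-family with the polynomial structure of $\chi$, working within a common ambient model (such as the $\scrC^-$ framework of Section \ref{subsec:idemp}) in which both Maurer--Cartan elements simultaneously live, and confirming that after substitution of $\eta$ the deformed differential $\mu^1_{\delta(\eta(s)),\epsilon(s)}$ still depends analytically on $s$ and on the coefficients of $\eta$.
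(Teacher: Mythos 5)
Your overall architecture is the same as the paper's: versality of the skyscraper family (Lemma \ref{lem:ptsvers}) and of the tautological bounding-cochain family, Lemma \ref{lem:versdef} to produce a formal solution with invertible linear part, Artin approximation (Theorem \ref{thm:Artin}) to make it convergent, and the non-Archimedean inverse function theorem to get the local isomorphism $\eta$. The ``main obstacle'' you flag but do not resolve --- that the matching equation really is a \emph{finite} system of convergent analytic equations --- is dealt with in the paper exactly where you guess: one works in $\scrQ^{perf}$ built from the strictly proper category $\scrQ$, so that morphisms into the idempotent object $\chi(z)$ are degree-bounded ($r^{ij}=0$ for $j<i-N$); this bounds the number of $\delta_\wp$-insertions, and strict lower-triangularity of $\delta$ bounds the $\delta$-insertions between them, so the deformed differential is a polynomial in $z$ with coefficients convergent in $a$. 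Pointing at the $\scrC^-$ framework is the right instinct, but as written this step is an IOU rather than an argument.

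The genuine gap is your last step. You assert that $f(z)$ stays a quasi-isomorphism near $y$ ``by lower-semicontinuity of the rank of the induced map on cohomology.'' But $\chi(z)$ is an idempotent object, i.e.\ an infinite twisted complex, so the chain complexes $\hom^\bullet_{\scrQ^{perf}}(T^\bc_{\eta(z)},\chi(z))$ are unbounded (finite-dimensional only in each fixed degree), and there is no fixed finite-dimensional family to which semicontinuity of rank applies: acyclicity of the cone of composition with $f(z)$ in \emph{all} degrees is not an open condition, since the per-degree neighbourhoods of $y$ on which cohomology vanishes may shrink as the degree grows. The paper's proof is structured precisely to avoid this: it only needs the cohomological unit of $\hom^0(T^\bc_{a(z)},T^\bc_{a(z)})$ to lie in the image of $\mu^2_{\scrQ^{perf}}(\cdot,r(z))$, a statement involving finitely many degrees of the cone and hence open in $z$; this exhibits $T^\bc_{a(z)}$ as an idempotent summand of $\chi(z)$, and since both objects are point-like (rank-one degree-zero endomorphisms, hence no nontrivial idempotent decompositions) the summand is the whole object and $r(z)$ is a quasi-isomorphism. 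You need this, or some equivalent finite-degree argument, for the conclusion. A further small omission: one must check that $\eta(z)$ has positive valuation, i.e.\ actually lies in $\BC$, which holds after shrinking $\bB_\varepsilon$ by continuity of the analytic solution.
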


In order to prove the Lemma we will work with the category $\scrQ \supset \scrV$ whose objects are the chosen finite set of split-generators $\{L_i\}$ for $\scrF(X)$, together with the objects $T^\bc_a$ for $a\in \BC$. Using homotopy units as in \cite{FO3}, there is a strictly unital chain-level model for $\scrQ$. 
We remark that we work with a slightly different construction of the Fukaya category from \cite{FO3}, following \cite{Seidel:HMSquartic}, which has the advantage that it is strictly proper (as is required for our constructions). 
Homotopy units have been constructed in this framework in \cite{Ganatra2013}.

Since $\scrQ$ admits strict units, we may consider the split-closure $\scrQ^{perf}$ constructed in Section \ref{subsec:idemp}. 
Even though $\scrQ$ is strictly proper, $\scrQ^{perf}$ need not be. 
Nevertheless, it has a weaker finite-dimensionality property. 
Namely, we recall that $\scrQ^{perf}$ has two types of objects: objects $X$ of $\scrQ^{tw}$, and objects $(Y,\wp)$ corresponding to a homotopy idempotent of an object $Y$ of $\scrQ^{tw}$. 
Using the fact that $\scrQ$ is strictly proper, one easily verifies that the morphism groups $\hom^k_{\scrQ^{perf}}(X, (Y,\wp))$ and $\hom^k_{\scrQ^{perf}}((Y, \wp), X)$ are finite-dimensional in each fixed degree $k$. 

In particular, the category $\scrQ^{perf}$ contains both the objects $T^\bc_a$ and objects quasi-representing idempotents up to homotopy as parametrized by the affine varieties $\bA_j^{pt}$, and morphisms between objects of the respective types are finite-dimensional in each degree.  (One can also construct a split-closure for $\scrQ$ as a subcategory of the category of modules $\scrQ^{mod}$, which however does not satisfy this finite-dimensionality condition; in the application below, this could lead to infinite systems of analytic equations, for which Artin's theorem would not guarantee convergent solutions with a common positive radius of convergence.) 

\begin{proof}[Proof of Lemma \ref{lem:local_one_dim}]
The proof is structurally somewhat  similar to that of Lemma \ref{lem:pointlike_LS_are_analytic}. Let $\chi: \bB \to \bA = \bA_j^{pt}$ be the map of Lemma \ref{lem:express_points_as_twisted_complexes}, given explicitly as
\[
\chi(z) = (V, \delta(z), \wp(z)).
\]
Note that $\chi(z)$ determines an object of the category $\scrQ^{perf}$. 
The vector spaces $E^i = \hom^i_{\scrQ^{perf}}(\chi(z), T^\bc_a)$ do not depend on $a$ or $z$,
and we have a map
\[
\Phi:  \BC \times \bB \times E^0 \to E^1, \qquad \Phi(a,z,r) = \mu^1_{\scrQ^{perf}}(r).
\]
Schematically,
\begin{equation}
\label{eqn:Phi}
\Phi(a,z,r) = \sum \, \pm \, \mu^\ast(a,\ldots, a, r, \delta,\ldots,\delta,\delta_\wp,\delta,\ldots,\delta,\delta_\wp,\ldots,\delta_\wp,\delta,\ldots,\delta)
\end{equation}
where $\delta$ and $\delta_\wp$ are algebraic functions of $z$.
Using the fact that $\scrQ$ is strictly proper and hence its $\hom$-spaces are degree-bounded, there is a fixed $N$ such that $r^{ij} = 0$ for all $j<i-N$, for all $r \in E^0$. 
It follows that we have an upper bound on the number of times that $\delta_\wp$ can appear in \eqref{eqn:Phi}. 
We also have an upper bound on the number of times that $\delta$ can appear between consecutive appearances of $\delta_\wp$, by the strict lower triangularity of $\delta$, and therefore we obtain an upper bound on the total number of times that $\delta$ can appear. 
Thus $\Phi(a,z,r)$ is a polynomial in $z$ whose coefficients are convergent power series in $a \in \BC_{\varepsilon'}$ for any $\varepsilon' >0$, and linear in $r$, so it is analytic on $\BC_{\varepsilon'} \times \bB \times E^0$.

Now the restriction of the family of objects $T^\bc_a$ to a formal neighbourhood of $a=0$ is clearly versal; similarly the restriction of the family of objects $\chi(z)$ to a formal neighbourhood of $z=y$ is versal by Lemma \ref{lem:ptsvers}. 
Therefore,  there exists a formal solution $(a(z),r(z))$ to $\Phi(a(z),z,r(z))=0$ by Lemma \ref{lem:versdef}, with $a(0) = 0$ and $r(0) = r_0$ equal to the quasi-isomorphism between $T= T^\bc_0$ and $\chi(y)$, and the function $z \mapsto a(z)$ is an isomorphism of formal neighbourhoods. 
Artin's theorem then yields a convergent analytic solution, whose 1-jet at the origin co-incides with that of the formal solution, and it follows by the inverse function theorem \cite[Theorem 10.10]{Abhyankar1964} that $z\mapsto a(z)$ is a local analytic isomorphism.

We have an analytic family of cochain complexes over an analytic neighbourhood of $y\in \bB$, finite-dimensional in each degree and with acyclic fibre at $z=0$, given by the mapping cones 
\[
\mathrm{Cone}\,\left( \xymatrix{ \hom^i_{\scrQ^{perf}}\left(T^\bc_{a(z)},\chi(z)\right) \ar[rrr]^{\mu^2_{\scrQ^{perf}}(\cdot , \,r(z))} &&  & \hom^i_{\scrQ}\left(T^\bc_{a(z)},T^\bc_{a(z)}\right) } \right).
\] Since the rank of cohomology is upper semicontinuous, for any $i$ there exists an analytic neighbourhood of $0$ on which the degree-$i$ cohomology vanishes. It follows that there exists an analytic neighbourhood $\bB_\varepsilon$ of the origin such that the cohomological unit lies in the image of $\mu^2_{\scrQ^{perf}}(\cdot,\, r(z))$ for each $z \in\bB_{\varepsilon} $, which implies that $T^\bc_{a(z)}$ is an idempotent summand of $\chi(z)$. However, both objects are point-like, in particular have rank one endomorphisms in degree zero so admit no non-trivial idempotent decomposition, therefore $r(z)$  is a quasi-isomorphism for $z\in\bB_{\varepsilon}$. 

Since analytic maps are continuous, the cocycle $a(z)$ belongs to $\BC$ in some analytic neighbourhood of $z=0$. Shrinking $\bB_{\varepsilon}$ if necessary,  we obtain a bounding cochain $a(z)$ on $T$ quasi-isomorphic to $\mathcal{O}_z$ for each $z\in \bB_{\varepsilon}$, and this defines the required map $\eta(z) \coloneqq a(z)$ from $\bB_{\varepsilon}$ to $ \BC$. 
\end{proof} 

\section{Proof of Theorem \ref{thm:non-spherical}}\label{Sec:proof}

\subsection{Naively constructible subsets}\label{sec:nc_sets}

Let $Y$ be a polydisc or a polytope domain over a bounded convex rational polytope. 
We will say that a naively constructible subset $V = \cup_i (D_i \setminus E_i)$ in $Y$ has codimension $\ge 1$ if we may take $D_i \neq Y$ for all $i$.

\begin{lem}
\label{lem:const_sub}
Let $Y$ be as above. If $V\subset Y$ is a naively constructible subset whose intersection with an open set (in the analytic topology) has codimension $\ge 1$, then $V$ itself has codimension $\ge 1$.
\end{lem}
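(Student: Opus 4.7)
My plan is to run a Baire-category argument on $Y$. The two non-Archimedean ingredients I would marshal are: $Y$ is a complete metric space in the analytic topology (as a closed bounded subset of $\Lambda^n$); and any proper analytic subvariety $D \subsetneq Y$ is closed and nowhere dense, by the non-Archimedean identity theorem (a non-zero convergent power or Laurent series cannot vanish on an open ball -- restrict to a generic one-parameter family through the ball and invoke Weierstrass preparation in one variable). Combined via Baire's theorem, no non-empty open subset of $Y$ can be contained in a countable union of proper analytic subvarieties.

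With these in hand, I take an arbitrary representation $V = \bigcup_i (D_i \setminus E_i)$ of $V$ as a naively constructible set and separate the indices according to whether $D_i = Y$, writing $V = V_0 \cup V_1$ with
\[
V_0 \coloneqq \bigcup_{i : D_i = Y}(Y \setminus E_i), \qquad V_1 \coloneqq \bigcup_{i : D_i \neq Y}(D_i \setminus E_i).
\]
The piece $V_1$ is already displayed as a codimension $\ge 1$ naively constructible subset by construction, so the whole problem reduces to showing $V_0 = \emptyset$; in that case $V = V_1$ and the required representation is in hand.

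To close, I argue by contradiction. If $V_0 \neq \emptyset$, then some summand $Y \setminus E_j$ is non-empty, so $E_j \subsetneq Y$ is a proper analytic subvariety, and
\[
U \setminus E_j \subset V_0 \cap U \subset V \cap U.
\]
The hypothesis supplies a codimension $\ge 1$ representation $V \cap U = \bigcup_k (D'_k \setminus E'_k)$ with each $D'_k \subsetneq Y$ proper, hence $V \cap U \subset \bigcup_k D'_k$; combining, $U \subset E_j \cup \bigcup_k D'_k$. This exhibits the non-empty analytic-open set $U$ as a countable union of proper analytic subvarieties of $Y$, contradicting the Baire ingredient above.

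The one non-trivial input is the identity-theorem/nowhere-denseness statement for analytic subvarieties (especially in the polytope-domain case, where one needs the Laurent form of Weierstrass preparation); modulo that standard piece of rigid analytic geometry, the rest is a soft set-theoretic reshuffling and the proof is immediate.
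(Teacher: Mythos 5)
Your argument is correct and is essentially the paper's own proof: both isolate the pieces with $D_i=Y$ and derive the contradiction that the nonempty analytic-open set would be covered by one proper analytic subvariety together with countably many others coming from the codimension~$\ge 1$ representation of $V\cap U$. The only difference is that you spell out why such a covering is impossible (completeness of $\Lambda$, nowhere-density of proper analytic subvarieties via the identity theorem, and Baire), a step the paper simply asserts as impossible; your justification is sound.
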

\begin{proof}
Suppose to the contrary that some $D_i = Y$, with $E_i \neq Y$. 
Then $V$ would contain a set of the form $Y \setminus E_i$ where $E_i$ is a proper analytic subvariety. 
It would follow that the open set could be covered by $E_i$ together with a countable set of proper analytic subvarieties, which is impossible.
\end{proof}

\subsection{Bounding cochains}

We now consider the locus $\BC_H \subset \BC$ consisting of all $a$ such that $[T^\bc_a]$ is contained in the countable subset $H \subset K\left(\scrF(X)^{perf}\right)$.

\begin{lem}\label{lem:non_sph_bc}
The locus $\BC_H$ is naively constructible of codimension $\ge 1$.
\end{lem}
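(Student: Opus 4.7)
The plan is to combine the naive constructibility given by Lemma \ref{lem:kclass_an} with a codimension estimate obtained by transferring Mumford's Lemma \ref{lem:mumford} through the local analytic chart of Lemma \ref{lem:local_one_dim}. Once $\BC_H$ is shown to be contained in a codimension-one analytic subset on some analytic open set, Lemma \ref{lem:const_sub} will upgrade this to a global codimension statement.

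First, enumerate $H = \{h_1, h_2, \ldots\}$. Lemma \ref{lem:kclass_an} applies to each singleton $\{h_k\}$ and shows that the locus of $a \in \BC$ with $[T^\bc_a] = h_k$ is naively constructible; since naive constructibility is closed under countable unions, so is $\BC_H = \bigcup_k \BC_{\{h_k\}}$.

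Next, Lemma \ref{lem:local_one_dim} produces an analytic neighbourhood $\bB_\varepsilon$ of $y$ in $Y$ and an analytic isomorphism $\eta: \bB_\varepsilon \to U$ onto an analytic neighbourhood $U$ of $0 \in \BC$ such that the homological mirror functor sends $T^\bc_{\eta(z)}$ to an object quasi-isomorphic to $\mathcal{O}_z$. Identifying $H$ with its image in $K(Y)$ via the mirror equivalence, we have $[T^\bc_{\eta(z)}] \in H$ if and only if $[\mathcal{O}_z] \in H$, so $\eta^{-1}(\BC_H \cap U) = L_H(Y) \cap \bB_\varepsilon$. Since $\Lambda$ is uncountable, Mumford's Lemma \ref{lem:mumford} ensures that $L_H(Y)$ is a countable union of subvarieties of the K3 surface $Y$ of dimension $\leq 1$, each of which is in particular a proper analytic subvariety. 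Transporting through the analytic isomorphism $\eta$, it follows that $\BC_H \cap U$ is contained in a countable union of proper analytic curves in $U$, hence has codimension $\geq 1$ on the analytic open set $U$.

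Finally, for any polydisc $\BC_{\varepsilon_0} \subset \BC$ containing $U$, the naively constructible set $\BC_H \cap \BC_{\varepsilon_0}$ has codimension $\geq 1$ on the open subset $U \subset \BC_{\varepsilon_0}$, so Lemma \ref{lem:const_sub} gives codimension $\geq 1$ in $\BC_{\varepsilon_0}$; exhausting $\BC$ by the polydiscs $\BC_{\varepsilon_0}$ as $\varepsilon_0 \to 0$ yields the desired statement. The main subtlety to check is that the algebraic structure on $L_H(Y)$ gives rise via $\eta$ to honest analytic subvarieties of $U$, which is immediate because algebraic subvarieties of $Y$ are analytic subvarieties and analytic isomorphisms preserve the class of analytic subvarieties.
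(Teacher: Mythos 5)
Your argument is correct and follows essentially the same route as the paper: naive constructibility from Lemma \ref{lem:kclass_an}, the codimension bound by transporting Mumford's Lemma \ref{lem:mumford} through the local analytic isomorphism of Lemma \ref{lem:local_one_dim}, and the upgrade via Lemma \ref{lem:const_sub}. The extra bookkeeping you include (enumerating $H$, exhausting $\BC$ by the polydiscs $\BC_{\varepsilon_0}$, and noting that algebraic curves restrict to analytic subvarieties) is all consistent with, and slightly more explicit than, the paper's proof.
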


\begin{proof}
We know that $\BC_H$ is naively constructible by Lemma \ref{lem:kclass_an}. 
By Lemma \ref{lem:mumford}, the locus of points $y \in \bB^{}_\varepsilon$ such that $[y]$ is contained in the corresponding subset $H \subset \CH_*(Y)$ is naively constructible of codimension $\ge 1$. 
It follows that the intersection of $\BC_H$ with the image of the local isomorphism $\chi: \bB^{}_\varepsilon \to \BC$ provided by Lemma \ref{lem:local_one_dim} has the same property, and therefore that $\BC_H$ has the same property by Lemma \ref{lem:const_sub}.
\end{proof}

\subsection{Local systems}

We now consider the locus $\LS_H(A) \subset \LS(A)$ consisting of all $a$ such that $[T^\bc_{a}] \in H$.

\begin{lem}\label{lem:non_sph_ls}
The locus $\LS_H(A)$ is naively constructible of codimension $\ge 1$.
\end{lem}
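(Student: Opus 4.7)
The plan is to reduce the statement to Lemma \ref{lem:non_sph_bc} via Theorem \ref{thm:bc_ls}, and then lift the resulting codimension bound using Lemma \ref{lem:const_sub}. The main point is that the exponential map identifies the bounding cochain domain $\BC$ with an honest analytic open subset of $\LS(A)$, not merely a subset of some lower-dimensional slice.

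Concretely, the power series $\exp(x) = \sum_{n\geq 0} x^n/n!$ and $\log(1+y) = \sum_{n\geq 1}(-1)^{n+1}y^n/n$ converge on $\Lambda_{>0}$ and $1+\Lambda_{>0}$ respectively, and are mutually inverse. They induce an analytic isomorphism
\[
\exp \colon \BC \xrightarrow{\ \sim\ } U \coloneqq H^1(T; 1+\Lambda_{>0}).
\]
The next step is to verify that $U$ is an analytic open subset of $\LS(A)$. Since $1+\Lambda_{>0} = \{x \in \Lambda : |x-1| < 1\}$ is the open unit ball about $1$ in $\Lambda^*$, and every point of $1+\Lambda_{>0}$ has valuation $0 \in A$, the set $U$ lies in $\LS(A)$ and is the analytic unit ball about the trivial local system there. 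This is not contradicted by the fact that $\val(U) = \{0\}$: in the non-Archimedean analytic topology, the valuation map is locally constant, so each valuation fibre is already clopen in $\LS(A)$.

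Theorem \ref{thm:bc_ls} now gives $[T^{\ls}_{\exp(a)}] = [T^{\bc}_a]$ for every $a \in \BC$, so the isomorphism $\exp$ sends $\BC_H$ precisely onto $U \cap \LS_H(A)$. Lemma \ref{lem:non_sph_bc} supplies the naive constructibility and codimension bound for $\BC_H$ in $\BC$, which transfers across the analytic isomorphism to $U \cap \LS_H(A)$ in $U$. Combined with naive constructibility of $\LS_H(A)$ in $\LS(A)$ from Lemma \ref{lem:kclass_an}, Lemma \ref{lem:const_sub} then yields the conclusion.

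The one step with any genuine content is the observation that $U$ is open in $\LS(A)$ despite $\val(U)$ being a single point of $A$; once one recalls that the non-Archimedean topology on $\LS(A)$ is strictly finer than the product of the real topology on the base and the analytic topology on the unitary fibre, the rest of the argument is a formal concatenation of results already in hand.
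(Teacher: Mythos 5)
Your proposal is correct and follows essentially the same route as the paper: transport the codimension bound for $\BC_H$ (Lemma \ref{lem:non_sph_bc}) through the exponential map using Theorem \ref{thm:bc_ls}, then conclude with Lemma \ref{lem:kclass_an} and Lemma \ref{lem:const_sub}. The only difference is cosmetic: the paper works with the polydiscs $\BC_\varepsilon$ and simply asserts that $\exp$ maps them to open subsets of $\LS(A)$, whereas you spell out (correctly) why the image is open despite lying over $\val=0$, via the local constancy of the valuation in the non-Archimedean topology.
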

\begin{proof}
We know that $\LS_H(A)$ is naively constructible by Lemma \ref{lem:kclass_an}. 
The exponential map defines an isomorphism from $\BC_\varepsilon$ to an open subset of $\LS(A)$, for any $\varepsilon >0$. 
It identifies $\BC_H$ with the intersection of $\LS_H(A)$ with this open subset, by Theorem \ref{thm:bc_ls}. 
Since $\BC_H$ is naively constructible of codimension $\ge 1$ by Lemma \ref{lem:non_sph_bc}, it follows that $\LS_H(A)$ is naively constructible of codimension $\ge 1$ by Lemma \ref{lem:const_sub}.
\end{proof}

\subsection{Valuation image}

Let $P$ be a bounded convex rational polytope, and $Y_P$ the corresponding polytope domain. 
Any analytic subvariety $Z \subset Y_P$ admits a decomposition into finitely many irreducible components, each of which has a well-defined (Krull) dimension (see  \cite[Section 7]{BGR} for background). 

The next result describes the image of an irreducible analytic subvariety of a polytope domain $Y_P$ under the valuation map $\val: Y_P \to P$.  
It dates back to \cite{Bieri1984} (see also \cite{Einsiedler2006}). 
A proof in the present setting follows from \cite[Corollary 6.2.2]{Berkovich2004} (see also  \cite[Proposition 5.4]{Gubler}, or \cite[Section 8]{Rabinoff} for the special case of an algebraic hypersurface in $Y_P$  in the language of rigid analytic spaces).

\begin{thm}
\label{thm:val_image}
Let $Z \subset Y_P$ be an irreducible analytic subvariety of dimension $k$.  Then the image $\val(Z) \subset P$ is a locally finite rational polyedral complex of dimension $k$.
\end{thm}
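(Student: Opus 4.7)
My plan is to reduce to the case of an analytic hypersurface and then use Noether normalization to handle general codimension. First I would treat the hypersurface case. Suppose $Z = V(f) \subset Y_P$ is the vanishing locus of a single nonzero convergent Laurent series $f = \sum_\nu f_\nu x^\nu$. Associated to $f$ is its tropicalization $\trop(f) \colon P \to \bR$, defined by $\trop(f)(w) = \min_\nu\bigl(\val(f_\nu) + \nu \cdot w\bigr)$. The convergence condition on $f$ ensures that the minimum is attained by only finitely many $\nu$ on any compact subset of $P$, so the graph of $\trop(f)$ is piecewise affine with finitely many pieces over any compact region. A standard non-Archimedean maximum modulus argument (or direct Newton polygon computation) then identifies $\val(Z)$ with the corner locus of $\trop(f)$: the set of $w \in P$ at which the minimum is achieved by at least two distinct indices $\nu$. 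This corner locus is a locally finite rational polyhedral subcomplex of $P$ of pure codimension one, proving the theorem when $Z$ is a hypersurface.

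For general irreducible $Z$ of dimension $k$, my plan is to apply a Noether normalization lemma for the quotient affinoid algebra $\mathcal{O}(Y_P)/I(Z)$. Possibly after shrinking $P$ to a rational subpolytope and applying a monomial change of coordinates, this produces a finite surjection $\pi \colon Z \to Y_Q$ onto a $k$-dimensional polytope domain. Monomial coordinate changes act on $\val$ by integer linear maps, so $\val(Z)$ is built from $Q$ by a piecewise integer-affine rearrangement. The polyhedral structure and local finiteness of $\val(Z)$ then follow by combining the hypersurface case, applied to the finitely many defining equations of $Z$, with the compatibility of $\val$ with the normalization.

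The main obstacle is the dimension equality $\dim \val(Z) = k$. The upper bound is straightforward from the reduction above. The lower bound requires that $\val|_Z$ does not collapse any positive-dimensional stratum, which I would establish by arranging the Noether normalization so that $\pi$ is finite flat on a Zariski-dense open subset, and then arguing that over such an open the restricted valuation map is proper and surjective onto $Q$. Rather than reprove this delicate dimension statement from scratch, my plan is to invoke \cite[Corollary 6.2.2]{Berkovich2004} (or equivalently \cite[Proposition 5.4]{Gubler}), which package exactly this content together with the polyhedral structure; an alternative bootstrap from \cite[Section 8]{Rabinoff} for algebraic hypersurfaces, extended to the analytic setting via Weierstrass preparation, would also suffice.
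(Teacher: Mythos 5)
Your proposal lands in essentially the same place as the paper: the paper gives no independent proof of this statement, treating it as known and citing \cite[Corollary 6.2.2]{Berkovich2004} (alternatively \cite[Proposition 5.4]{Gubler}, with \cite{Bieri1984,Einsiedler2006,Rabinoff} for context), which is precisely what you invoke for the polyhedral structure and the dimension equality. One caution about the self-contained portion of your sketch, should you ever try to make it stand alone: deducing polyhedrality of $\val(Z)$ by applying the hypersurface case to ``the finitely many defining equations of $Z$'' is not valid as stated, since the intersection of the corner loci of a finite generating set can strictly contain $\val(Z)$ (one needs a tropical basis), but this does not affect your proof as written because you defer exactly that content to the cited results.
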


We can now complete the proof of Theorem \ref{thm:non-spherical}:

\begin{proof}[Proof of Theorem \ref{thm:non-spherical}]
By Lemma \ref{lem:non_sph_ls}, $\LS_H(A)$ is contained in a countable union of proper  analytic subvarieties, each of which has dimension $\le 1$ by Krull's principal ideal theorem. 
Therefore Theorem \ref{thm:val_image} implies that the image $\val(\LS_H(A)) \subset A$ is contained in a countable union of locally finite rational polyhedral complexes of dimension $\le 1$. 
In particular there exists a point $a \in A$ that does not lie in the image. 
This means that the torus $T' = F_a$ has the property that no unitary local system on $T'$ defines an object with $K$-class lying in $H$. 
This holds in particular for the trivial local system, which completes the proof.
\end{proof}

\section{Cobordisms and flux}
\label{Sec:flux}

In this section we describe the most basic obstructions to the existence of a cobordism, which arise from flux. 
Related ideas appear in \cite{Reznikov} and \cite[Proposition 4.6]{Mikhalkin2018}.

\subsection{Cobordism groups} 

Let $X=(X,\omega) $ be a closed symplectic manifold equipped with an $\infty$-fold Maslov cover of its oriented Lagrangian Grassmannian. 
We define a Lagrangian brane to be a Lagrangian submanifold $L \subset X$ equipped with a grading relative to this Maslov cover, and a spin structure.

A cobordism between Lagrangian branes $L, L' \subset X$ is a Lagrangian brane $V \subset (\C\times X, \omega_{\C} \oplus \omega)$ which projects properly to $\C$, and which outside a compact set co-incides with the Lagrangian branes $(L' \times (-\infty,-1)) \cup (L \times (1,\infty))$.  There is an obvious notion of Lagrangian cobordism between tuples $\bL^- = \{L_j^-\}$ and $\bL^+ = \{L_k^+\}$ by allowing the cobordism to have more ends at either or both of positive and negative real infinity, or of a Lagrangian nullcobordism if one of the sets of ends is empty.  See \cite{Biran-Cornea,Biran-Cornea-2} for background.

We let $\Cob(X)$ denote the Lagrangian (brane) cobordism group, i.e. the abelian group generated by Lagrangian branes modulo relations from brane cobordisms.  The `U-turn' cobordism shows that shifting the grading of a Lagrangian by $[1]$ changes the sign of the corresponding class in $\Cob(X)$. (The group $\Cob^{\unob}(X)$ appearing in the Introduction, in which relations are imposed only when they arise from Floer-theoretically unobstructed cobordisms, comes with a canonical map $\Cob^{\unob}(X) \to \Cob(X)$.) 

\begin{rmk} A  variant on the definition, introduced in \cite{SS:tropical}, is to consider cobordisms over $\C^*$, i.e. Lagrangian branes in $(\C^*\times X, \omega_{\C^*} \oplus \omega)$ which carry $\bL^{\pm}$ over  radial half-lines in a neighbourhood of the ends of $\C^*$. Any cobordism over $\C$ yields a cobordism over $\C^*$ by quotienting the plane $\C$ by a  large  imaginary translation.  Cylindrical cobordisms play no specific role here, but are more directly comparable to  the Chow group of 0-cycles on the mirror as explained in \cite{SS:tropical}.
\end{rmk}

Traces of Hamiltonian isotopies, and of Lagrange surgeries which respect brane structures appropriately, both yield cobordisms. 
The construction extends to Morse--Bott Lagrange surgeries, i.e. surgeries along clean intersections.

\begin{example}\label{eg:spher1}
Let $X$ be a symplectic Calabi--Yau surface which contains Lagrangian spheres $S, S'$ which meet transversely at two points $\{p,q\}$ of the same Maslov grading.  There is a shift $\ell$ and a graded Lagrange surgery $T = S[\ell] \stackrel{p+q}{\longrightarrow} S'$ which is a  Lagrangian torus graded cobordant to $S[\ell] \sqcup S'$. Varying the surgery parameters, one obtains a one-parameter family of Hamiltonian isotopy classes of such tori $T$, all of which have the same class $S[\ell] + S'$ in $\Cob(X)$.
\end{example}

\begin{example}\label{eg:spher2}
Let $X$ be a symplectic Calabi--Yau surface which contains a Lagrangian sphere $S$ which meets a Lagrangian torus $T$ cleanly in a circle $\gamma = S\cap T \subset T$. There is a one-parameter family $T_t$ of Hamiltonian isotopy classes of Lagrangian tori obtained as the images of $T$ by isotopies of flux $t\alpha$, where $\alpha \in H^1(T;\R)$ vanishes on the class of the loop $\gamma$, which one can arrange to also meet $S$ cleanly in a family of parallel circles $\gamma_t$. The Morse--Bott Lagrange surgery of $T_t$ and $S$ is a Lagrangian sphere $S_t$, whose Hamiltonian isotopy class $S'$ is independent of $t$. It follows that the one-parameter family of tori $T_t$ all have the same class $S - S'$ in $\Cob(X)$.  (This example has a tropical interpretation, considering the family of SYZ fibres lying over  interior points in an edge of the tropicalization of a rational curve.)
\end{example}

\subsection{Flux constraints}

If $L \subset X$ is a Lagrangian, we recall that the \emph{flux homomorphism}
\[ \{\text{small deformations of $L$}\}/\text{Ham. isotopy} \xrightarrow{\varphi_L} H^1(L;\R)\]
is given by integrating the symplectic form over the cylinders swept by loops in $L$ under the deformation. 
It is an isomorphism onto a neighbourhood of the origin $\cU_L \subset H^1(L;\R)$: the inverse takes a class $\alpha \in \cU_L$ to the graph of a closed one-form representing $\alpha$ in de Rham cohomology, in a Weinstein neighbourhood of $L$. 
We denote this  Hamiltonian isotopy class by $L(\alpha)\coloneqq \varphi_L^{-1}(\alpha)$.

Similarly, if $V \subset \C \times X$ is a cobordism with (positive and negative) ends $\bL = \{L_i\}$, there is a flux homomorphism $\varphi_V$ fitting into a commutative diagram
\begin{equation}
\label{eqn:fluxcom}
\xymatrix{\{\text{small deformations of $V$}\}/\text{Ham. isotopy} \ar[r]^-{\varphi_V} \ar[d] & H^1(V;\R) \ar[d] \\
\{\text{small deformations of $\bL$}\}/\text{Ham. isotopy} \ar[r]^-{\varphi_{\bL}}& H^1(\partial V;\R),}
\end{equation}
where $\varphi_{\bL}$ is the direct sum of all of the flux homomorphisms $\varphi_{L_i}$, mapping to 
\[ \bigoplus_i H^1(L_i;\R) \simeq H^1(\partial V;\R).\]
The only difference from the standard case of a single Lagrangian recalled above is in the construction of the inverse to the flux homomorphism $\varphi_V$: one chooses the Weinstein neighbourhood of $V$, and the closed one-forms representing classes in $H^1(V;\R)$, to respect the product structure near $\partial V$. 
To prove that the diagram \eqref{eqn:fluxcom} commutes, one observes that $\omega_\C$ vanishes over a neighbourhood of $\partial V$ because the projection of the neighbourhood to $\C$ is contained in a one-dimensional subset.

Now let $\bL=\{L_i\}$ be a finite tuple of Lagrangian branes in $X$, and let $\cU_\bL \coloneqq \prod_i \cU_{L_i}$. 
We consider the map
\begin{align*}
f_{\bL}: \cU_\bL &\to \Cob(X) \\
f_{\bL}(\alpha) & \coloneqq \sum_i L_i(\alpha_i).
\end{align*}
Note that each Hamiltonian isotopy class $L_i(\alpha_i)$ gives a well-defined element of $\Cob(X)$ because Hamiltonian-isotopic Lagrangians are cobordant.

\begin{lem}\label{lem:gen2flux}
Let $\bL^+,\bL^-$ be finite tuples of Lagrangians. Then 
\[ \left\{\left(\alpha^+,\alpha^-\right) \in \cU_{\bL^+} \times \cU_{\bL^-}: \ f_{\bL^+}\left(\alpha^+\right) = f_{\bL^-}\left(\alpha^-\right)\right\} = \bigcup_{\bV \in \mathcal{V}} \cZ_\bV,\]
where the indexing set $\mathcal{V}$ is countable, and to each $\bV \in \mathcal{V}$ there is associated an oriented manifold $V$ with oriented boundary $\partial V = \partial^+ V \sqcup \overline{\partial^- V}$, and an identification $\partial^\pm V \simeq \sqcup_i L_i^\pm$, such that $\cZ_\bV$ is identified with an open subset of an affine subspace parallel to $\im(H^1(V;\R) \to H^1(\partial V;\R))$.
\end{lem}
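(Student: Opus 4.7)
The plan is to translate the equality $f_{\bL^+}(\alpha^+) = f_{\bL^-}(\alpha^-)$ in $\Cob(X)$ into the existence of a Lagrangian brane cobordism $V \subset \C \times X$ with boundary $\bL^+(\alpha^+) \sqcup \overline{\bL^-(\alpha^-)}$, then to stratify the space of such cobordisms by topological type together with isotopy class, and finally to use the flux homomorphism to exhibit each stratum as an open subset of the claimed affine subspace. The translation rests on the presentation of $\Cob(X)$ by generators and cobordism relations: the equality expresses $\sum[L_i^+(\alpha_i^+)] - \sum[L_j^-(\alpha_j^-)]$ as an integer combination of such relations, and, by reversing direction to absorb negative signs and by Lagrangian concatenation along matched auxiliary ends (the composition operation of \cite{Biran-Cornea}), any such combination is realized as a single, possibly disconnected, Lagrangian cobordism $V$ with the prescribed boundary.

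Define $\mathcal{V}$ to be the set of pairs $\bV = (V, [\iota])$ in which $V$ is a compact oriented smooth manifold with an identification $\partial V \simeq \bigsqcup_i L_i^+ \sqcup \overline{\bigsqcup_j L_j^-}$, and $[\iota]$ is a path-component of the space of Lagrangian brane embeddings $\iota : V \hookrightarrow \C \times X$ realizing this boundary identification on the radial asymptotic arms. There are countably many compact oriented manifolds up to diffeomorphism, only finitely many boundary identifications for each, and the space of smooth embeddings of $V$ into $\C \times X$ with the $C^\infty$-topology is metrizable and separable, so its $\pi_0$ is countable; hence $\mathcal{V}$ is countable. For $\bV \in \mathcal{V}$, let $\cZ_\bV \subset \cU_{\bL^+} \times \cU_{\bL^-}$ consist of those $(\alpha^+,\alpha^-)$ that arise as boundary fluxes of some $\iota \in \bV$.

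Assume $\cZ_\bV \neq \emptyset$ and fix a base embedding $\iota_0 \in \bV$ with boundary fluxes $(\alpha_0^+,\alpha_0^-)$. Any other $\iota_1 \in \bV$ is joined to $\iota_0$ by a path of Lagrangian embeddings; integrating $\omega_\C \oplus \omega$ over the cylinders swept by $1$-cycles in $V$ along this path yields a well-defined flux class $\beta \in H^1(V;\R)$, and the commutative diagram \eqref{eqn:fluxcom} identifies the image of $\beta$ under $H^1(V;\R) \to H^1(\partial V;\R)$ with $(\alpha_1^+,\alpha_1^-) - (\alpha_0^+,\alpha_0^-)$. Hence $\cZ_\bV \subset (\alpha_0^+,\alpha_0^-) + \im(H^1(V;\R) \to H^1(\partial V;\R))$. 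For openness, at any $\iota_1 \in \bV$ the flux homomorphism $\varphi_V$ is a local isomorphism onto a neighborhood of $0 \in H^1(V;\R)$, and small Lagrangian deformations remain in the component $[\iota_0]$, so by \eqref{eqn:fluxcom} the realized boundary fluxes fill out a neighborhood of $(\alpha_1^+,\alpha_1^-)$ in the affine subspace. The main obstacle is the first translation step: converting an integer combination of cobordism relations into a single geometric cobordism with exactly the prescribed boundary requires a careful Lagrangian gluing argument along matched auxiliary ends, using Weinstein neighborhoods to realize the composition operation for Lagrangian cobordisms; the remaining steps are then a mechanical application of the flux framework.
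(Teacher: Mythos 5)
There is a genuine gap at the step you yourself flag as ``the main obstacle''. An equality $f_{\bL^+}(\alpha^+)=f_{\bL^-}(\alpha^-)$ in $\Cob(X)$ only says that the difference is a finite integer combination of relations coming from cobordisms $\{V_j\}$; to get a single geometric object with exactly the prescribed ends one must glue the cancelling intermediate ends of the $V_j$ by bent tubes $\gamma\times L$ over arcs $\gamma\subset\C$. The matching pattern of these gluings is an arbitrary graph, and when it is non-planar (or when ends must be connected by arcs that cross), two tubes $\gamma_1\times L$ and $\gamma_2\times L'$ over crossing arcs intersect whenever $L\cap L'\neq\emptyset$ (in particular whenever $L=L'$). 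So the glued object is in general only an \emph{immersed} Lagrangian cobordism, and since the cobordism is $3$-dimensional in the $6$-dimensional $\C\times X$, its double points cannot be removed by perturbation, certainly not within Lagrangian cobordisms with fixed ends. Your strata $\cZ_\bV$ are defined via path components of Lagrangian brane \emph{embeddings}, so the union $\bigcup_\bV\cZ_\bV$ only accounts for pairs $(\alpha^+,\alpha^-)$ realized by a single embedded cobordism; the inclusion of the left-hand side into this union — which is the substance of the lemma — is exactly what your ``careful Lagrangian gluing via Weinstein neighbourhoods / Biran--Cornea composition'' cannot deliver in general.

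The paper circumvents this by weakening the parametrizing data: $\mathcal{V}$ consists of homotopy classes of \emph{topological} cobordisms, i.e.\ arbitrary smooth maps $i:V\to\C\times X$ asymptotic to the ends (countable by simplicial approximation), and the affine subspace is obtained not from flux along a path of embeddings within a fixed isotopy class, but from the vanishing of the obstruction class $[i^*(\omega_\C\oplus\omega)]\in H^2(V,\partial V;\R)$, which moves by the image of $\alpha^\pm$ under $H^1(\partial V;\R)\to H^2(V,\partial V;\R)$; the long exact sequence of the pair then gives an affine subspace parallel to $\im(H^1(V;\R)\to H^1(\partial V;\R))$. The possibly immersed glued cobordism is a perfectly good topological cobordism, so every relation lands in one of these strata, and openness is obtained as in your last step, by deforming the glued cobordism via flux on $H^1(V;\R)$ and restricting the deformation to the pieces $V_j$ using the diagram \eqref{eqn:fluxcom}. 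Your overall architecture (countable stratification, affine directions governed by $H^1(V;\R)\to H^1(\partial V;\R)$, openness via the flux local isomorphism) matches the paper's, but you need to replace embeddings by smooth maps (or at least immersions) and the path-flux argument by the relative obstruction-class argument for the proof to go through.
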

\begin{proof}
We define a \emph{topological} cobordism between $\bL^-$ and $\bL^+$ to be a smooth map $i: V \to \C \times X$ which is asymptotic to the Lagrangians outside a compact set (in the same way that a Lagrangian cobordism is); we require $V$ to be oriented, compatibly with the boundary. 
We say two topological cobordisms are homotopic if they are homotopic relative to a neighbourhood of $\partial V$. 
By choosing each $\cU_{L}$ to be simply connected, we may ensure that the set of homotopy classes $\bV$ of topological cobordisms between $\bL^-$ and $\bL^+$ can be canonically identified with those between  $\bL^-(\alpha^-)$ and $\bL^+(\alpha^+)$, for any $\alpha^\pm \in \cU_{\bL^\pm}$. 
The set $\mathcal{V}$ of such homotopy classes is countable, by the simplicial approximation theorem. 

Associated to a homotopy class of topological cobordisms is a cohomology class $[i^*(\omega_\C \oplus \omega)] \in H^2(V,\partial V;\R)$, whose non-vanishing obstructs the existence of a Lagrangian representative. 
Moving the ends $\bL^\pm$ of the cobordism by flux $\alpha^\pm$ changes this class by the image of $\alpha^\pm$ under the map $H^1(\partial V;\R) \to H^2(V,\partial V;\R)$. 
Applying the long exact sequence for the pair $(V,\partial V)$, we see that the set of fluxes $\alpha^\pm$ for which the obstruction vanishes is (the intersection of $\cU_{\bL^+} \times \cU_{\bL^-}$ with) an affine subspace $\cZ'_{[V]}$ parallel to $\im(H^1(V;\R) \to H^1(\partial V;\R))$.

Now, any relation $\sum_i L_i^+(\alpha_i^+) = \sum_i L_i^-(\alpha_i^-)$ is generated by a finite set of Lagrangian cobordisms $\{V_j\}$. 
Gluing the ends of the $V_j$ together we obtain a (possibly immersed) Lagrangian cobordism $V$ between $\bL^-(\alpha^-)$ and $\bL^+(\alpha^+)$; thus $(\alpha^+,\alpha^-)$ lies in the affine subspace $\cZ'_{[V]}$. 
Furthermore, deformations of $V$ give rise to deformations of the $V_j$ by restriction, so the cobordism relation can be deformed precisely in the directions corresponding to $\im(H^1(V;\R) \to H^1(\partial V;\R))$, by \eqref{eqn:fluxcom}. 
It follows that the subset $\cZ_{[V]} \subset \cZ'_{[V]}$ represented by Lagrangian cobordism relations is open, completing the proof.
\end{proof}

Now suppose that $\dim(X) = 2n$.  
We have an $n$-form $\Omega$ on $\cU_\bL$, given by 
\begin{equation}\label{eqn:Omega}
\Omega(\alpha_1,\ldots,\alpha_n) \coloneqq \int_{\sqcup_i L_i} \alpha_1 \cup \ldots  \cup \alpha_n
\end{equation}
for $\alpha_i \in T\cU_\bL \simeq H^1(\sqcup_i L_i;\R)$. 
When $n=2$ this is a symplectic form.

\begin{lem}\label{lem:2dflux}
The restriction of $\Omega$ to each of the subspaces $\cZ_{\mathbb{V}}$ from Lemma \ref{lem:gen2flux} vanishes. 
When $n=2$, the subspaces $\cZ_\bV$ are furthermore Lagrangian.
\end{lem}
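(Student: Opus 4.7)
The plan is to derive the vanishing from Stokes' theorem applied on the cobordism $V$, and to derive Lagrangianness in the case $n=2$ by combining this with the ``half-lives, half-dies'' theorem for compact oriented $3$-manifolds with boundary.

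For the first assertion, recall that by Lemma \ref{lem:gen2flux} the subspace $\cZ_\bV$ is parallel to $\im(H^1(V;\R) \to H^1(\partial V;\R))$, where $\partial V = \partial^+ V \sqcup \overline{\partial^- V}$ is identified with $\sqcup_i L_i^+ \sqcup \sqcup_i L_i^-$. Given $n$ tangent vectors $\alpha_1,\ldots,\alpha_n$ to $\cZ_\bV$, I would choose lifts $\tilde\alpha_j \in H^1(V;\R)$ and represent them by closed $1$-forms $\omega_j$ on $V$. The wedge product $\omega_1 \wedge \cdots \wedge \omega_n$ is then a closed $n$-form on the oriented $(n{+}1)$-manifold $V$, so Stokes' theorem gives
\[
\int_{\partial V} \omega_1|_{\partial V} \wedge \cdots \wedge \omega_n|_{\partial V} \;=\; \int_{V} d(\omega_1 \wedge \cdots \wedge \omega_n) \;=\; 0.
\]
Unpacking the boundary orientations (which differ from the natural orientations on the negative ends by a sign), the left-hand side is exactly the value of $\Omega$ on the tuple $(\alpha_1,\ldots,\alpha_n)$ restricted to $\cZ_\bV$.

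For the second assertion, recall that when $n=2$ each $L_i$ is an oriented surface, so cup product gives a non-degenerate skew pairing $H^1(L_i;\R) \otimes H^1(L_i;\R) \to H^2(L_i;\R) \simeq \R$ by Poincar\'e duality; hence $\Omega$ is a symplectic form on $T\cU_\bL \simeq H^1(\sqcup_i L_i;\R)$. The first part shows $\cZ_\bV$ is isotropic, so it remains to verify that its dimension equals $\tfrac12 \dim \cU_\bL = \tfrac12 b_1(\partial V)$. For this I would invoke the classical half-lives-half-dies theorem: since $V$ is a compact oriented $3$-manifold with boundary, Poincar\'e--Lefschetz duality combined with the long exact sequence of the pair $(V,\partial V)$ shows that $\im(H^1(V;\R) \to H^1(\partial V;\R))$ is a Lagrangian subspace of $H^1(\partial V;\R)$ with respect to the intersection pairing, and in particular has dimension exactly $\tfrac12 b_1(\partial V)$.

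The only real subtlety is bookkeeping of signs and orientations: one must check that the boundary orientation of $V$ combined with the orientation reversal on $\partial^- V$ produces exactly the combination appearing in the definition of $\Omega$ on $\cU_{\bL^+}\times \cU_{\bL^-}$. Once that is arranged, both parts follow from well-known facts, and the technical core is just Stokes' theorem plus half-lives-half-dies.
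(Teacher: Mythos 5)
Your proposal is correct and follows essentially the same route as the paper: your Stokes'-theorem computation is just the de Rham formulation of the paper's observation that $\beta_1\cup\cdots\cup\beta_n$ pairs to zero against $[\partial V]$ because $\partial V$ bounds in $V$, and your appeal to half-lives-half-dies is exactly the ``well-known argument using Poincar\'e--Lefschetz duality'' the paper invokes for half-dimensionality when $n=2$. The orientation bookkeeping you flag is indeed the only loose end, and the paper treats it with the same level of (in)attention.
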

\begin{proof}
If $\alpha_i \in T\cU_\bL \simeq H^1(\partial V;\R)$ lie in the tangent space to $\cZ_{[V]}$, then they are the image of some $\beta_i \in H^1(V;\R)$ under the restriction map $H^1(V;\R) \to H^1(\partial V;\R)$ by Lemma \ref{lem:gen2flux}. 
Thus 
\[  \Omega(\alpha_1,\ldots,\alpha_n) = \int_{\partial V} \beta_1 \cup \ldots \cup \beta_n = 0, \]
because $[\partial V]$ vanishes in $H_n(V;\R)$.

In the case $n=2$, this means each $\cZ_\bV$ is isotropic for the symplectic form $\Omega$. 
The additional fact that the image of $H^1(V;\R) \to H^1(\partial V;\R)$ is half-dimensional, and hence Lagrangian, is proved by a well-known argument using Poincar\'e--Lefschetz duality.
\end{proof}

\subsection{Infinite-dimensionality}

We use the results of the previous section to analyse the fibres of the maps $f_\bL : \cU_\bL \to \Cob(X)$.

\begin{lem}
\label{lem:isot}
Let $\bL$ be a finite tuple of Lagrangian branes. 
Then for any $z \in \Cob(X)$ we have
\[ f_\bL^{-1}(z) = \bigcup_\bV \cK_\bV\]
where the union is countable, and each $\cK_\bV$ is an open subset of an affine subspace on which $\Omega$ vanishes.
\end{lem}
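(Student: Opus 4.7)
The plan is to reduce to Lemmas \ref{lem:gen2flux} and \ref{lem:2dflux} by fixing a base point in the fibre. If $f_\bL^{-1}(z) = \emptyset$ there is nothing to prove, so fix some $\alpha_0 \in f_\bL^{-1}(z)$. Then $\alpha \in f_\bL^{-1}(z)$ if and only if $f_\bL(\alpha) = f_\bL(\alpha_0)$, so applying Lemma \ref{lem:gen2flux} with $\bL^+ = \bL^- = \bL$ yields a countable decomposition
\[
f_\bL^{-1}(z) \;=\; \bigcup_{\bV \in \mathcal{V}} \cK_\bV, \qquad \cK_\bV \coloneqq \bigl\{\alpha \in \cU_\bL : (\alpha, \alpha_0) \in \cZ_\bV \bigr\},
\]
indexed by the countable set $\mathcal{V}$ of homotopy classes of topological cobordisms $V$ with $\partial^+ V \simeq \sqcup_i L_i \simeq \partial^- V$.

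Each $\cK_\bV$ is the slice of the open subset $\cZ_\bV$ of its affine subspace through $\alpha_0$ in the second factor, hence is itself an open subset of an affine subspace with direction
\[
D_\bV \;\coloneqq\; \bigl\{\beta \in H^1(\sqcup_i L_i;\R) : (\beta, 0) \in \mathrm{im}\bigl(H^1(V;\R) \to H^1(\partial V;\R)\bigr)\bigr\},
\]
that is, classes on $\sqcup_i L_i \simeq \partial^+ V$ which extend to $V$ with vanishing restriction to the opposite boundary $\partial^- V$. To verify that $\Omega$ vanishes on $D_\bV$, take $\beta_1, \dots, \beta_n \in D_\bV$ and choose lifts $\tilde\beta_k \in H^1(V;\R)$ restricting to $\beta_k$ on $\partial^+ V$ and to $0$ on $\partial^- V$. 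Using the orientation convention $\partial V = \partial^+ V \sqcup \overline{\partial^- V}$,
\[
\Omega(\beta_1, \ldots, \beta_n) \;=\; \int_{\partial^+ V} \beta_1 \cup \cdots \cup \beta_n \;=\; \int_{\partial V} \tilde\beta_1|_{\partial V} \cup \cdots \cup \tilde\beta_n|_{\partial V} \;=\; 0,
\]
where the second equality is because $\tilde\beta_k|_{\partial^- V} = 0$, and the third because $[\partial V]$ is null-homologous in $V$. This is precisely the calculation from the proof of Lemma \ref{lem:2dflux}, evaluated on tangent vectors lying in $D_\bV \times \{0\} \subset T\cZ_\bV$.

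No serious obstacle is anticipated: once one picks the base point $\alpha_0$, all the structural work is already done by Lemmas \ref{lem:gen2flux} and \ref{lem:2dflux}. The only content is recognising that the slice of $\cZ_\bV$ through $(\,\cdot\,,\alpha_0)$ has tangent direction $D_\bV$, and that the vanishing of $\Omega$ on the full $\cZ_\bV$ restricts to the vanishing asserted here because the contribution from the second (frozen) factor is automatically trivial on tangent vectors of the form $(\beta, 0)$.
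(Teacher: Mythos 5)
Your proposal is correct and follows essentially the same route as the paper: fix a base point $\alpha_0$ in the fibre, apply Lemma \ref{lem:gen2flux} with $\bL^+ = \bL^- = \bL$, and identify $\cK_\bV$ with the slice $\cZ_\bV \cap (\cU_\bL \times \{\alpha_0\})$, on which $\Omega$ vanishes by the computation of Lemma \ref{lem:2dflux} restricted to tangent vectors of the form $(\beta,0)$. Your explicit verification via lifts vanishing on $\partial^- V$ just spells out what the paper obtains by directly restricting the vanishing of $\Omega$ on $\cZ_\bV$ to the slice.
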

\begin{proof}
It is immediate from Lemma \ref{lem:gen2flux} that
\[ f_\bL^{-1}(f_\bL(\alpha)) \simeq \bigcup_\bV \cK_\bV\]
where $\cK_\bV \coloneqq \cZ_\bV \cap \left(\cU_\bL \times \{\alpha\}\right) \subset \cU_\bL$. 
Furthermore, $\Omega$ vanishes on each $\cZ_\bV \subset \cU_\bL \times \cU_\bL$ by Lemma \ref{lem:2dflux}, and hence on its intersection with $\cU_\bL \times \{\alpha\}$.
\end{proof}

\begin{rmk}
Examples \ref{eg:spher1} and \ref{eg:spher2} exhibit one-dimensional families of Lagrangian tori in symplectic K3 surfaces, whose class in $\Cob(X)$ is constant. 
On the other hand, the $n=2$ case of Lemma \ref{lem:isot} shows that there is no two-dimensional family of Lagrangian tori whose class in $\Cob(X)$ is constant, because an isotropic subspace is at most half-dimensional.
\end{rmk}

For the next Corollary, we will apply Lemma \ref{lem:isot} in the case that $\bL = \{L\}$ is a singleton, so $\cU_{\bL} = \cU_L \subset H^1(L;\R)$ is a neighbourhood of $0 \in H^1(L;\R)$.   Note that if  $L$ is a torus or an oriented surface of strictly positive genus, then the $n$-form $\Omega$ defined in \eqref{eqn:Omega} is non-vanishing on $\cU_L$.

\begin{cor}
\label{cor:infgen}
Let $\{L_i\}$ be a countable set of Lagrangian branes, and $L$ a Lagrangian brane such that $\Omega$ is non-vanishing on $\cU_L$.   Then there is a deformation $L'$ of $L$ whose class in $\Cob(X)$ is not generated by the $\{L_i\}$.
\end{cor}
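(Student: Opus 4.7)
The plan is to apply Lemma \ref{lem:isot} to the singleton tuple $\bL = \{L\}$, and then combine it with the non-vanishing hypothesis on $\Omega$ and an uncountability argument. First, since the index set is countable, the subgroup $G \subset \Cob(X)$ generated by the classes $[L_i]$ is itself countable: its elements are finite integer linear combinations of the $[L_i]$. Deformations of $L$ modulo Hamiltonian isotopy are parametrized by $\cU_L \subset H^1(L;\R)$ via the flux map, and $[L(\alpha)] = f_{\{L\}}(\alpha)$ lies in $G$ precisely when $\alpha$ belongs to $f_{\{L\}}^{-1}(G) = \bigcup_{z \in G} f_{\{L\}}^{-1}(z)$.

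For each $z \in G$, Lemma \ref{lem:isot} exhibits $f_{\{L\}}^{-1}(z)$ as a countable union of open subsets $\cK_\bV$ of affine subspaces of $\cU_L$ on each of which the $n$-form $\Omega$ restricts to zero. Here is where the hypothesis that $\Omega$ is non-vanishing on $\cU_L$ enters: no affine subspace of $\cU_L$ on which $\Omega$ vanishes can be full-dimensional, because such a subspace would be all of $\cU_L$ and would force $\Omega$ itself to be identically zero. Hence every $\cK_\bV$ is contained in a proper affine subspace of $\cU_L$, of codimension at least one.

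Combining these observations, $f_{\{L\}}^{-1}(G)$ is a countable union of proper affine subspaces of the nonempty open subset $\cU_L$ of the finite-dimensional real vector space $H^1(L;\R)$. By the uncountability of $\R$, such a union cannot cover $\cU_L$, so there exists $\alpha \in \cU_L$ with $[L(\alpha)] \notin G$, and $L' = L(\alpha)$ is the required deformation. The substantive work has already been absorbed into Lemma \ref{lem:isot}; what remains is a general-position argument in which the sole purpose of the non-vanishing hypothesis on $\Omega$ is to force every affine piece $\cK_\bV$ into positive codimension, without which the countable union might conceivably fill $\cU_L$.
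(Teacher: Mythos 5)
Your proposal is correct and follows essentially the same route as the paper: the paper's proof likewise notes that any subspace of $\cU_L$ on which $\Omega$ vanishes has positive codimension (since $\Omega$ is non-vanishing), that the set of classes generated by the $\{L_i\}$ is countable, and then invokes Lemma \ref{lem:isot} together with the impossibility of covering $\cU_L$ by countably many such subspaces. Your write-up simply spells out the same steps in more detail.
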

\begin{proof}
Note that any subspace of $\cU_L$ on which $\Omega$ vanishes must have positive codimension, and $\cU_L$ cannot be covered by a countable set of such subspaces. 
Since the set of classes in $\Cob(X)$ generated by the set $\{L_i\}$ is countable, the result follows immediately from Lemma \ref{lem:isot}.
\end{proof}

Corollary \ref{cor:infgen} shows that, if $X$ contains a Lagrangian brane $L$ as in the statement, then $\Cob(X)$ is not countably generated. 
When $X$ is four dimensional, this simply means that $L$ contains a Lagrangian of genus $\ge 1$. 
In fact the following result shows that in this case, $\Cob(X)$ is not even `finite-dimensional':

\begin{lem}
\label{lem:infdim}
Suppose $X$ is four dimensional, and contains a Lagrangian brane $L$ of genus $\ge 1$. Then $\Cob(X)$ cannot be covered by the images of maps $f_{\bL_i}$, for any countable set of finite tuples $\{\bL_i\}$ such that the dimensions of the $\cU_{\bL_i}$ are bounded above.
\end{lem}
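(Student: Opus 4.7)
The plan is to use $m$ copies of $L$ to build a test family whose dimension exceeds what any single $f_{\bL_i}$ can cover. Let $g \geq 1$ be the genus of $L$, let $N$ be the assumed upper bound on $\dim \cU_{\bL_i}$, and fix an integer $m > N/(2g)$. Take $\bL^+$ to be the tuple of $m$ disjoint Hamiltonian-parallel copies of $L$ (say from a Weinstein neighbourhood), so that $\cU_{\bL^+} \simeq \cU_L^m$ is an open neighbourhood of the origin in a real vector space of dimension $2gm$. The class in $\Cob(X)$ that will escape every $\im(f_{\bL_i})$ will be of the form $f_{\bL^+}(\alpha)$ for a suitable $\alpha$.

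For each $i$, I would apply Lemma \ref{lem:gen2flux} to the pair $(\bL^+, \bL^- := \bL_i)$, writing
\[
\Sigma_i := \{(\alpha^+, \alpha^-) \in \cU_{\bL^+} \times \cU_{\bL_i} : f_{\bL^+}(\alpha^+) = f_{\bL_i}(\alpha^-)\} = \bigcup_{\bV \in \mathcal{V}_i} \cZ_\bV,
\]
as a countable union of open subsets of affine subspaces, each parallel to $\im(H^1(V;\R) \to H^1(\partial V;\R))$. By the Poincar\'e--Lefschetz argument invoked in Lemma \ref{lem:2dflux}, this image is half-dimensional in $H^1(\partial V;\R) \cong H^1(\bL^+;\R) \oplus H^1(\bL_i;\R)$, so each $\cZ_\bV$ has real dimension at most $\tfrac{1}{2}(2gm + \dim \cU_{\bL_i}) \leq gm + N/2$. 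Projecting onto the first factor, $\pi_+(\Sigma_i)$ is contained in a countable union of affine subspaces of $\cU_{\bL^+}$ of dimension at most $gm + N/2$, which is strictly less than $\dim \cU_{\bL^+} = 2gm$ by the choice of $m$. In particular each such affine subspace is proper.

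Finally, $S := \bigcup_i \pi_+(\Sigma_i)$ is a countable union of proper affine subspaces of the positive-dimensional real vector space $\cU_{\bL^+}$. Since $\R$ is uncountable, $S$ cannot cover $\cU_{\bL^+}$: any $\alpha \in \cU_{\bL^+} \setminus S$ gives a class $f_{\bL^+}(\alpha) \in \Cob(X)$ lying outside every $\im(f_{\bL_i})$, as required. The essential ingredient is choosing $m$ large enough relative to $g$ and $N$ to force each $\cZ_\bV$ into a positive-codimension affine subspace of $\cU_{\bL^+}$; everything else is a clean dimension count combined with the fact (used throughout the paper) that a positive-dimensional real vector space cannot be covered by countably many proper affine subspaces. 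I do not expect any serious obstacle beyond carefully bookkeeping the dimensions and the boundary identification, which are standard.
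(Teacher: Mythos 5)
Your argument is correct and is essentially the paper's own proof: both take a tuple of many parallel copies of $L$ so that $\dim\cU_{\bL^+}$ exceeds the bound, and use Lemma \ref{lem:gen2flux} together with the half-dimensionality (Lagrangian) statement of Lemma \ref{lem:2dflux} to conclude that the relevant locus in $\cU_{\bL^+}$ is a countable union of positive-codimension affine pieces, which cannot cover it. The only difference is presentational: you argue directly with the explicit threshold $m > N/(2g)$, whereas the paper phrases the same dimension count as a proof by contradiction.
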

\begin{proof}
Suppose to the contrary that $\Cob(X)$ is covered by the images of such maps $f_{\bL_i}$; then in particular, $\im(f_{\bL^+})$ is covered by these images for any $\bL^+$. 
This implies that the projections of the sets $\cup_{\bV_i} \cZ_{\bV_i} \subset \cU_{\bL^+} \times \cU_{\bL_i}$ from Lemma \ref{lem:gen2flux} to $\cU_{\bL^+}$ cover all of $\cU_{\bL^+}$. 
Since there are countably many $\cZ_{\bV_i}$, and they are Lagrangian and in particular half-dimensional by Lemma \ref{lem:2dflux}, this can only be true if $\dim(\cU_{\bL^+}) \le \dim(\cU_{\bL_i})$ for some $i$. 
However the dimension of $\cU_{\bL^+}$ can be made arbitrarily large by taking sufficiently many copies of $L$, so we have a contradiction to the bound on the dimension of $\cU_{\bL_i}$.
\end{proof}

\begin{rmk}
In the loose analogy between cobordism and Chow groups, 
Lemma \ref{lem:infdim} is mirror to Mumford's theorem \cite{Mumford} that $\CH_0(S)$ is infinite dimensional for a K3 surface $S$ over $\Lambda$. 
Our proof bears striking similarities with Mumford's, both obtaining a dimension bound from the isotropic condition.
\end{rmk}

\begin{rmk}
Observe that there are countably many Hamiltonian isotopy classes of Lagrangian spheres in any symplectic manifold of dimension $\ge 4$. Thus Corollary \ref{cor:infgen} shows that for any Lagrangian $L$ as in the statement, there exists a deformation $L'$ of $L$ which is not generated by Lagrangian spheres.  
In fact, Corollary \ref{cor:infgen} can be strengthened to show there exists a deformation $L'$, no non-zero multiple of which is generated by the $L_i$; so there exists a deformation $L'$, no non-zero multiple of which is generated by Lagrangian spheres. 
We mention this slight strengthening because of the relationship with the categorical characterisation of the Beauville--Voisin subring as the saturation of the subgroup generated by spherical objects, cf. Section \ref{Subsec:spherical_objects}.
\end{rmk}

\section{The O'Grady filtration}\label{Sec:OG}

Let $Y$ be an algebraic K3 surface over $\C$.  There is a filtration
\[
\bZ \cdot c_Y = S_0(Y) \subset S_1(Y) \subset \cdots \subset S_g(Y) \subset \cdots \subset \CH_0(Y),
\]
introduced by O'Grady \cite{oGrady} and further studied in \cite{Voisin:0-cycles}, with lowest-order part generated by the Beauville--Voisin class $c_Y$ (see Section \ref{Subsec:spherical_objects}).   In this final section we discuss a possible symplectic counterpart. Recall from the Introduction that $\bZ\cdot c_Y \subset S(Y) \subset K(Y)$ lies in the saturation of the group generated by spherical objects, so one expects the mirror to $S_0(Y)$ to involve Lagrangians (a multiple of) whose cobordism class is generated by Lagrangian spheres.

\subsection{Configurations in symmetric products}

Let $Y$ be an algebraic K3 surface over $\C$.  By definition, O'Grady sets 
\[
S_g(Y) \coloneqq \left \{ z \in \CH_0(Y) \  \big | \ z = z' + a c_Y, \  \textrm{with $z'$ effective  of degree $g$ and $a\in \Z$} \right\}.
\] 
This has a number of equivalent and more geometric characterisations; for instance O'Grady proves that $S_g(Y)$ comprises the cycles $z \in \CH_0(Y)$ for which there is some curve $C\subset Y$ with normalisation $C^+$, with the sum of genera of components of $C^+$ being at most $g$, and with  $z \in \mathrm{Im}(\CH_0(C) \to \CH_0(Y))$ (see \cite[Corollary 1.7]{oGrady} and \cite[Proposition 2.7]{Voisin:0-cycles}).  

Voisin \cite{Voisin:0-cycles} has given another characterisation of the induced finite filtration
\[
S_0^k(Y) \subset S_1^k(Y) \subset S_2^k(Y) \subset \cdots \subset S_k^k(Y) \subset \, \CH_0(Y) \cap \mathrm{deg}^{-1}(k),
\]
where $\deg: \CH_0(Y) \to \Z$ is the degree homomorphism and $k\geq 0$. Let
\[
Z_k(Y, z) \coloneqq \left \{(x_1,\ldots,x_k) \in \Sym^k(Y) \ \big| \   \sum\, [x_i] = z \in \CH_0(Y) \,\right \}
\]
be the orbit of the $0$-cycle $z$ under rational equivalence;  this is a union of algebraic subsets, and its dimension is by definition the supremum of the dimensions of its irreducible components. 
Mumford \cite{Mumford} showed the components of $Z_k(Y,z)$ to be isotropic for the holomorphic symplectic form induced on (the smooth locus of) the symmetric product by the holomorphic symplectic form on $Y$, hence of dimension $\le k$.  

\begin{thm}[Voisin]  \label{thm:isotropic_Bside} For $g\leq k$, 
\[
S_g^k(Y) = \left\{z \in CH_0(Y) \cap \deg^{-1}(k) \ \big | \ Z_k(Y,z) \neq \emptyset, \ \dim(Z_k(Y,z)) \geq k-g \, \right\}.
\]
In particular, $Z_k(Y,z)$ is Lagrangian if and only if  $z=k\cdot c_Y$. \end{thm}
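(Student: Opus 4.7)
The plan is to establish the main equality by two separate inclusions, and then deduce the ``in particular'' statement by specialising to $g = 0$: the equality gives $S_0^k(Y) = \{z : \dim Z_k(Y,z) \ge k\}$, and since Mumford's bound already yields $\dim Z_k(Y,z) \le k$, the condition becomes that each component of $Z_k(Y,z)$ is Lagrangian; as $S_0^k(Y) = (\Z \cdot c_Y) \cap \deg^{-1}(k) = \{k\cdot c_Y\}$, this is exactly the Lagrangian characterisation.

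For the inclusion $S_g^k(Y) \subseteq \{z : \dim Z_k(Y,z) \ge k-g\}$, I would unpack the algebraic definition $z = (k-g)c_Y + [y_1] + \cdots + [y_g]$ and exhibit the required family explicitly. Fix any rational curve $C \subset Y$ (whose existence on $Y$ underlies the definition of $c_Y$); by Beauville--Voisin every point of $C$ has class $c_Y$. Then the morphism
\[
\Sym^{k-g}(C) \longrightarrow \Sym^k(Y), \qquad (c_1, \ldots, c_{k-g}) \longmapsto (c_1, \ldots, c_{k-g}, y_1, \ldots, y_g),
\]
has image a $(k-g)$-dimensional subvariety of $Z_k(Y,z)$, proving the required lower bound.

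For the reverse inclusion, let $W$ be an irreducible component of $Z_k(Y,z)$ with $d := \dim W \ge k-g$ (and $d \le k$ by Mumford's isotropy). The target is to produce points $y_1, \ldots, y_{g'} \in Y$ with $g' \le k-d \le g$ such that, up to the action of $\mathfrak{S}_k$, a general element of $W$ has the form $(c_1, \ldots, c_{k-g'}, y_1, \ldots, y_{g'})$ with each $c_j$ varying in a rational curve; this places $z = (k-g')c_Y + \sum[y_i]$ in $S_{g'}^k \subseteq S_g^k$. The heart of the argument is a structural result for orbits of rational equivalence on $\Sym^k(Y)$: any positive-dimensional irreducible subvariety on which the sum-class in $\CH_0(Y)$ is constant must, up to $\mathfrak{S}_k$, be a product of a constant part and a family of unordered tuples supported on rational curves. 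I would derive this by combining Mumford's isotropy of $W$ in the holomorphic symplectic form of $\Sym^k(Y_{\mathrm{sm}})$ with a coordinate-wise infinitesimal analysis: the tangent motion of each $x_i$ within $W$ must pair trivially with the symplectic form, and for surfaces with $\mathrm{Alb}(Y)=0$ this forces either that $x_i$ is individually constant or that $x_i$ traces out a rational curve whose points all have class $c_Y$.

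The main obstacle is this last step: extracting the rational curves from the isotropy and infinitesimal data. The projection $p_i(W) \subset Y$ is not \emph{a priori} a union of rational curves, because moving $x_i$ within $W$ may require compensating motion in the other coordinates $x_j$, so points of $p_i(W)$ need not be individually rationally equivalent in $Y$. Overcoming this requires a careful Mumford--Ro\v{\i}tman style analysis in which the isotropy of $W$ and the triviality of $\mathrm{Alb}(Y)$ are leveraged jointly to slice $W$ and force the traced-out curve in each ``moving'' coordinate to be rational; this delicate orbit-decomposition argument is carried out in \cite{Voisin:0-cycles}.
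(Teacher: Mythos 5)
First, note that the paper does not prove this statement at all: it is quoted as Voisin's theorem from \cite{Voisin:0-cycles}, so there is no internal proof to compare with, and the substance of your proposal has to stand on its own. Your easy inclusion $S_g^k(Y)\subseteq\{z:\dim Z_k(Y,z)\ge k-g\}$ via $\Sym^{k-g}(C)\times\{(y_1,\dots,y_g)\}$ for a rational curve $C$ is correct, and the deduction of the ``in particular'' clause is fine (modulo the small slip that $\dim Z_k(Y,z)\ge k$ only forces \emph{some} irreducible component to be Lagrangian, not every component, since $\dim$ is defined as the supremum over components). The genuine gap is the reverse inclusion, which is exactly the content of Voisin's theorem: you reduce it to a ``structural result'' which is false as stated, and then defer the real work back to \cite{Voisin:0-cycles}.

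The claimed structure --- that a positive-dimensional irreducible $W\subset\Sym^k(Y)$ with constant sum-class must, up to $\mathfrak{S}_k$, be a constant part times a family in which each moving coordinate sweeps out a rational curve with points of class $c_Y$ --- has elementary counterexamples. Take any curve $C\subset Y$ of genus $h\ge 1$ (say a general elliptic curve in an elliptic linear system) and let $W\subset\Sym^k(C)\subset\Sym^k(Y)$ be a fibre of $\Sym^k(C)\to\Pic^k(C)$, i.e.\ a linear system $|D|$ of dimension $\ge k-h>0$. Rational equivalence on $C$ pushes forward, so the sum-class is constant on $W$; yet at a general point of $W$ every coordinate moves, each projection of $W$ to $Y$ is all of $C$, which is not rational, and the points of $C$ do not have class $c_Y$ for general such $C$. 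These linear-system orbits are not a pathology to be excluded: by O'Grady's curve-theoretic description of $S_g$ they are precisely the families realising $\dim Z_k(Y,z)\ge k-g$ for cycles supported on genus-$g$ curves, so any correct proof must accommodate them rather than rule them out. Moreover the proposed derivation from Mumford's isotropy plus $\mathrm{Alb}(Y)=0$ cannot work even in principle in the crucial low-dimensional cases: a one-dimensional component is automatically isotropic, so isotropy imposes no coordinate-wise constraint there at all. Voisin's actual proof of the inclusion $\{\dim Z_k(Y,z)\ge k-g\}\subseteq S_g^k(Y)$ proceeds along quite different lines and is not a coordinate-wise dichotomy; as written, the hard direction of your proposal rests on a false lemma, with the final appeal to \cite{Voisin:0-cycles} doing all of the work.
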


The dimension constraint $\dim (Z_k(Y,z)) \le k$, arising from the isotropic condition, is analogous to Lemma \ref{lem:isot}. 
Thus it is natural to wonder if Theorem \ref{thm:isotropic_Bside} has a similar analogue.

\subsection{Target genus of a cobordism}

Fix a symplectic K3 surface $X$ containing a Maslov-zero Lagrangian torus $L$, and let $\cU \coloneqq \cU_L$.
For any $k \in \Z_{>0}$, let $\bL^k \coloneqq \{\text{$k$ copies of $L$}\}$, so $\cU_{\bL^k} = \cU^k$. 
For $z \in \Cob(X)$ we define
\[
Z_k(X,z) \, \coloneqq \,  f_{\bL^k}^{-1}(z),
\]
which is an isotropic subset of $\cU^k$ and therefore has dimension $\le k$, by Lemma \ref{lem:isot}.

\begin{rmk} 
In view of Theorem \ref{thm:isotropic_Bside}, and the fact that $\bZ\cdot c_Y$ lies in the saturation of the subgroup generated by spherical objects, it is natural to ask if $\dim(Z_k(X,z)) = k$ if and only if some multiple of $z$ is generated by Lagrangian spheres. 
Note that Examples \ref{eg:spher1} and \ref{eg:spher2} give one-parameter families of tori with constant class in $\Cob(X)$, so $\dim(Z_1(X,z)) = 1$; in both of these examples the tori are generated by Lagrangian spheres by construction.
\end{rmk}

Let $\Cob^k(X,\cU) \subset \Cob(X)$ denote the image of $f_{\bL^k}$. We define a filtration by minimal possible ``target genus'' of a cobordism with given input: 
\[
\Cob^k(X,\cU)_g \coloneqq \left \{ k\textrm{-tuples which are cobordant to a union of Lagrangians of total genus} \ \leq g  \right\}.
\]
Then 
\[
\Cob^k(X,\cU)_0 \subset \Cob^k(X,\cU)_1 \subset \Cob^k(X,\cU)_2 \subset \cdots \subset \Cob^k(X,\cU)_k = \Cob^k(X,\cU), 
\]
and the lowest order piece $\Cob^k(X,\cU)_0$ comprises exactly the $k$-tuples of elements of $\cU$ whose sum is generated by Lagrangian spheres. 
This filtration should be compared with \cite[Speculation 0.1 (b)]{Shen2017a}.

\begin{lem}
\label{lem:OGdim}
If  $z \in \Cob^k(X,\cU)_g$, then $Z_k(X,z)$ has dimension $\geq k-g$.
\end{lem}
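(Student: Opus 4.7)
The plan is to exploit the flux machinery of Lemma~\ref{lem:gen2flux} and Lemma~\ref{lem:2dflux}: given a Lagrangian cobordism $V$ witnessing the hypothesis $z \in \Cob^k(X,\cU)_g$, I will exhibit an open subset of $Z_k(X,z)$ of dimension at least $k-g$ by varying the flux of the positive end in directions coming from $H^1(V;\R) \to H^1(\partial V;\R)$, while keeping the low-genus end $\bM$ pinned in place.

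Concretely, I would first pick any $\alpha^+_0 \in f_{\bL^k}^{-1}(z)$ and use the defining property of $\Cob^k(X,\cU)_g$ to produce a Lagrangian cobordism $V$ from $\bL^k(\alpha^+_0)$ to a tuple $\bM = \{M_j\}$ with $g' \coloneqq \sum_j g(M_j) \leq g$. Applying Lemma~\ref{lem:gen2flux} to $(\bL^+, \bL^-) = (\bL^k, \bM)$ produces a set $\cZ_{[V]} \subset \cU^k \times \cU_\bM$ containing $(\alpha^+_0, 0)$ and open in an affine subspace parallel to $I \coloneqq \im(H^1(V;\R) \to H^1(\partial V;\R))$. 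The fibre $Z_k(X,z)$ then contains the first-factor projection of $\cZ_{[V]} \cap (\cU^k \times \{0\})$, which is open in an affine subspace of $\cU^k$ parallel to $I \cap (H^1(\partial^+V;\R) \oplus 0)$.

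Next, I would finish with the dimension count. Since $\partial^+V$ is a disjoint union of $k$ tori and $\partial^- V = \sqcup_j M_j$ has total genus $g'$, one has $\dim H^1(\partial V;\R) = 2k + 2g'$. The Poincar\'e--Lefschetz duality argument invoked in the proof of Lemma~\ref{lem:2dflux} shows that $I$ is Lagrangian for the intersection pairing on $H^1(\partial V;\R)$, so $\dim I = k + g'$. The projection $I \to H^1(\partial^- V;\R)$ has image of dimension at most $\dim H^1(\partial^- V;\R) = 2g'$, so
\[
\dim\bigl(I \cap (H^1(\partial^+V;\R) \oplus 0)\bigr) \;\geq\; (k+g') - 2g' \;=\; k - g' \;\geq\; k - g,
\]
and this furnishes an open subset of $Z_k(X,z)$ of dimension at least $k-g$.

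The argument has no serious obstacle: the key input, namely the half-dimensionality of $I$, is already established inside Lemma~\ref{lem:2dflux}, and the rest is a short linear algebra calculation. The only point requiring care is that the non-compact cobordism $V \subset \C \times X$ should be treated as a compact $3$-manifold with boundary $\partial^+V \sqcup \partial^- V$ for the cohomological computations, which is implicit whenever the paper uses $H^\bullet(V)$ or $H^\bullet(\partial V)$.
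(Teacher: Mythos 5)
Your argument is correct and is essentially the paper's own proof: both intersect the Lagrangian affine subspace $\cZ_\bV$ supplied by Lemmas \ref{lem:gen2flux} and \ref{lem:2dflux} with the slice where the genus-$\le g$ end is held at zero flux, and your rank--nullity count $\dim\bigl(I \cap (H^1(\partial^+V;\R)\oplus 0)\bigr) \ge (k+g') - 2g'$ is the same computation as the paper's ``Lagrangian of dimension $g+k$ meets a codimension-$2g$ slice in dimension $\ge k-g$''. The only cosmetic difference is that you anchor the intersection at the explicit point $(\alpha^+_0,0)$, making the nonemptiness that the paper leaves implicit visible.
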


\begin{proof}
Let $z = \sum_i L_i$ where $L_i$ have total genus $g$. 
Setting $\bL^- = \{L_i\}$ and $\bL^+ = \bL^k$, we have
\[ Z_k(X,z) = f_{\bL^+}^{-1}\left(f_{\bL^-}(0)\right).\]
Lemma \ref{lem:2dflux} then implies that $Z_k(X,z)$ is the intersection of a countable union of subsets $\cZ_\bV \subset \cU_{\bL^-} \times \cU_{\bL^+}$ (which are Lagrangian, hence of dimension $g+k$) with the subset $\{0\} \times \cU^k$ (which has codimension $2g$). 
Therefore it has dimension at least $g+k - 2g = k-g$.
\end{proof}

Suppose now that we have a homological mirror equivalence $\scrF(X)^{perf} \simeq \scrD(Y)$ with an algebraic K3 surface $Y$ over $\Lambda$, taking $T$ to $\mathcal{O}_y$. Suppose furthermore that the O'Grady filtration is well-defined and satisfies the previously discussed properties for $Y$, even though it is defined over $\Lambda$ rather than $\C$. 
Then we expect that $\Cob^{unob,k}(X,\cU)_g$ corresponds, under mirror symmetry, to a subset of $S^k_g(Y)$.
In light of Theorem \ref{thm:isotropic_Bside}, Lemma \ref{lem:OGdim} provides some modest evidence for this expectation. 

However it is also clear that $\Cob^{unob,k}(X,\cU)_g$ needs to be enlarged in order to be mirror to all of $S^k_g(Y)$. 
For example, the pieces $S_g(Y)$ of the O'Grady filtration are invariant under multiplication by $\Z$ \cite[Corollary 1.7]{oGrady}. The filtration $\Cob^k(X,\cU)_g$ introduced above need not have this property (for example, there exist symplectic $K3$ surfaces $X$ which contain Maslov-zero Lagrangian tori, but only in primitive homology classes \cite{SS:k3paper}, in which case it is clear that $k \cdot \Cob^1_1 \not\subseteq \Cob^k_1$ for any $k >0$).  
The most naive modification would be to allow repeated copies of the same Lagrangian to only contribute once to the total genus; however the proof of Lemma \ref{lem:OGdim} would become much harder with this modified version and we will not pursue it further.

\appendix

\section{The Beauville--Voisin ring and spherical objects}
\label{app:BV}

Let $Y$ be an algebraic $K3$ surface over an algebraically closed field $\BbK$ of characteristic zero, $R(Y) \subset \CH_*(Y)$ the Beauville--Voisin subring, and $S(Y) \subset \CH_*(Y)$ the subgroup generated by Mukai vectors of spherical objects in $\EuD(Y)$. 
The aim of this appendix is to prove the following:

\begin{thm}
\label{thm:sat}
The subgroup $R(Y) \subset \CH_*(Y)$ is the saturation of the subgroup $S(Y) \subset \CH_*(Y)$.
\end{thm}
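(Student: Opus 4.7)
The inclusion $S(Y) \subset R(Y)$ is already furnished by Huybrechts \cite{Huybrechts:Chow} and Voisin \cite{Voisin:0-cycles}, so what remains is to prove the two inclusions $R(Y) \subset \operatorname{sat}(S(Y))$ and $\operatorname{sat}(S(Y)) \subset R(Y)$. I will treat these separately, the first by an explicit Mukai-vector calculation using line bundles, and the second by showing that $R(Y)$ is already saturated inside $\CH_*(Y)$.

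For the inclusion $R(Y) \subset \operatorname{sat}(S(Y))$, the key observation is that on a K3 surface every line bundle $L$ is a spherical object: indeed $\Ext^i(L,L) \cong H^i(\mathcal{O}_Y)$, which is one-dimensional for $i=0,2$ and vanishes for $i=1$. Using $\mathrm{td}(Y) = 1 + 2c_Y$ (since $c_2(Y) = 24\,c_Y$ by Beauville--Voisin), one has $\sqrt{\mathrm{td}(Y)} = 1 + c_Y$, so writing $D = c_1(L)$ and using $D \cdot D' \in \Z c_Y$, the Mukai vector takes the form
\[
v^\CH(L) \;=\; [Y] + D + \bigl(\chi(L) - 1\bigr)\,c_Y \;\in\; S(Y),
\]
with $\chi(L) = 2 + (D\cdot D)/2$ by Riemann--Roch. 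Forming the combination $v^\CH(L) + v^\CH(L^{-1}) - 2\,v^\CH(\mathcal{O}_Y)$ yields $(D\cdot D)\,c_Y \in S(Y)$, and since $Y$ is projective there exists an ample $D$ with $D\cdot D > 0$, giving a nonzero multiple $N c_Y \in S(Y)$. Plugging back into $2(v^\CH(L) - v^\CH(\mathcal{O}_Y)) = 2D + (D\cdot D)\,c_Y$, one can absorb the $c_Y$-term to conclude $2D \in S(Y)$ for every $D \in \Pic(Y)$; and $v^\CH(\mathcal{O}_Y) = [Y] + c_Y$ combined with $N c_Y \in S(Y)$ gives $N[Y] \in S(Y)$. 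Hence every generator of $R(Y)$ has a nonzero multiple in $S(Y)$.

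For the reverse inclusion $\operatorname{sat}(S(Y)) \subset R(Y)$, it suffices to show that $R(Y)$ is itself saturated in $\CH_*(Y)$, i.e.\ that the quotient $\CH_*(Y)/R(Y) \cong \CH_0(Y)/\Z c_Y$ is torsion-free. Suppose $x \in \CH_0(Y)$ with $nx \in \Z c_Y$ for some nonzero $n$. Comparing degrees forces $nx = n\deg(x)\,c_Y$, hence $n\bigl(x - \deg(x) c_Y\bigr) = 0$ in $\CH_0(Y)$. But a K3 surface has trivial Albanese, so Rojtman's theorem (valid in characteristic zero) implies $\CH_0(Y)$ is torsion-free, forcing $x = \deg(x)\,c_Y \in \Z c_Y \subset R(Y)$.

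The computational part is routine once one notices that line bundles are spherical; the main potential sticking point is the second half, where torsion-freeness of the quotient $\CH_0(Y)/\Z c_Y$ could fail in principle. The Albanese-vanishing property of K3 surfaces is what makes the degree argument work, and this is the only non-elementary input beyond the cited results of Huybrechts--Voisin.
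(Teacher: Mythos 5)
Your proposal is correct, and it splits naturally into a half that coincides with the paper's argument and a half that takes a genuinely different, more elementary route. The containment of the saturation of $S(Y)$ inside $R(Y)$ is handled exactly as in the paper: quote $S(Y)\subset R(Y)$ from \cite{Huybrechts:Chow,Voisin:0-cycles}, then show $R(Y)$ is saturated in $\CH_*(Y)$ by reducing, via the degree splitting, to torsion-freeness of degree-zero $0$-cycles; your appeal to Rojtman's theorem and the trivial Albanese is the same input the paper cites (\cite{Rojtman,Bloch}). For the reverse containment $R(Y)\subset S(Y)^{\mathrm{sat}}$ the two arguments diverge: the paper works lattice-theoretically, using that spherical Mukai vectors have square $-2$, invoking Kuleshov's theorem \cite{Kuleshov} (via the Lefschetz principle) to realise \emph{every} $(-2)$-class of $R(Y)\simeq U\oplus\Pic(Y)$ by a spherical object, and then proving the purely arithmetic Lemma \ref{lem:min2sat} that the $(-2)$-classes span a finite-index sublattice of $U\oplus L$. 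You instead use only line bundles, which are obviously spherical, together with the Beauville--Voisin identities $c_2(Y)=24\,c_Y$ and $D\cdot D'\in\Z\, c_Y$ (both needed to write $v^\CH(L)=[Y]+D+(\chi(L)-1)c_Y$ as an integral class, and both available from \cite{BV}; note $(\chi(L)-1)\in\Z$ because the intersection form on a K3 is even), to produce $N c_Y$, $2D$ and $N[Y]$ in $S(Y)$ directly. Your route is shorter and avoids Kuleshov entirely; what the paper's route buys is the finer statement that $S(Y)$ is exactly the subgroup spanned by the $(-2)$-classes of $R(Y)$ — the ingredient behind the quantitative analysis in Example \ref{ex:satneeded} and Remark \ref{rmk:notspanned} — which your computation does not recover, though it is not needed for Theorem \ref{thm:sat} itself.
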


(We recall that a subgroup $A$ of an abelian group $B$ is \emph{saturated} if the quotient $B/A$ is torsion free, and the \emph{saturation} $A^{\mathrm{sat}}$ of a subgroup $A$ is the smallest saturated subgroup  containing $A$.)

The hard part of the proof is to show that $S(Y) \subset R(Y)$, which was proved by Huybrechts and Voisin \cite{Huybrechts:Chow,Voisin:0-cycles}. 
To finish the proof, we start by showing:

\begin{lem}
The subgroup $R(Y) \subset \CH_*(Y)$ is saturated.
\end{lem}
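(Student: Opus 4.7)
The plan is to identify $\CH_*(Y)/R(Y)$ explicitly and then reduce torsion-freeness to a known vanishing. Since $\CH_2(Y) = \Z\cdot[Y]$ and $\CH_1(Y) \simeq \Pic(Y)$ are already contained in $R(Y)$ by definition, the natural projection onto the $\CH_0$ summand yields a canonical isomorphism
\[
\CH_*(Y)/R(Y) \;\simeq\; \CH_0(Y)/(\Z \cdot c_Y).
\]
So the saturation claim reduces to showing that $\CH_0(Y)/(\Z\cdot c_Y)$ is torsion-free.

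Suppose $z \in \CH_0(Y)$ and $n \cdot z = m \cdot c_Y$ in $\CH_0(Y)$ for some nonzero integer $n$ and some $m \in \Z$. Applying the degree homomorphism $\deg:\CH_0(Y) \to \Z$ (which sends $c_Y \mapsto 1$) forces $m = n\deg(z)$, and hence the degree-zero class $\zeta := z - \deg(z)\cdot c_Y$ is $n$-torsion in the subgroup $\CH_0(Y)_0 \subset \CH_0(Y)$ of degree-zero cycles.

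The key input is Rojtman's theorem: for a smooth projective variety $Y$ over an algebraically closed field of characteristic zero, the Albanese map induces an isomorphism on torsion
\[
\CH_0(Y)_0[\mathrm{tors}] \;\xrightarrow{\sim}\; \mathrm{Alb}(Y)(\BbK)[\mathrm{tors}].
\]
For a K3 surface $H^1(Y,\mathcal{O}_Y) = 0$, so $\mathrm{Alb}(Y) = 0$; therefore $\CH_0(Y)_0$ is torsion-free. Consequently $\zeta = 0$, giving $z = \deg(z)\cdot c_Y \in \Z\cdot c_Y \subset R(Y)$, which is exactly the saturation condition.

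The only substantive ingredient is Rojtman's theorem together with the triviality of the Albanese of a K3; everything else is a formal manipulation using the grading of $\CH_*(Y)$ and the degree map. The main (and essentially only) thing to be careful about is invoking the correct characteristic-zero statement of Rojtman's theorem, which holds in the generality we need since $\BbK$ is algebraically closed of characteristic zero.
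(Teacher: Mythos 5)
Your proof is correct and follows essentially the same route as the paper: both reduce saturation of $R(Y)$ to torsion-freeness of the degree-zero part of $\CH_0(Y)$ (which the paper packages as the splitting $\CH_*(Y)\simeq R(Y)\oplus \CH_0(Y)_{\mathrm{hom}}$ induced by the degree map) and then invoke Rojtman's theorem. The only cosmetic difference is that you spell out the Rojtman input via the vanishing of the Albanese of a K3, where the paper simply cites Rojtman and Bloch for torsion-freeness of $\CH_0(Y)_{\mathrm{hom}}$.
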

\begin{proof}
Observe that the inclusion $\Z \cdot c_Y \subset \CH_0(Y)$ is split by the degree map, and this induces a splitting of the inclusion of the Beauville--Voisin ring $R(Y) \subset \CH_*(Y)$:
\[ \CH_*(Y) \simeq R(Y) \oplus \CH_0(Y)_{\mathrm{hom}}.\]
The result now follows from the fact that $\CH_0(Y)_{\mathrm{hom}}$ is torsion free \cite{Rojtman,Bloch}.
\end{proof}

The proof of Theorem \ref{thm:sat} is now completed by the following:

\begin{lem} The saturation of $S(Y) \subset R(Y)$ is all of $R(Y)$.
\end{lem}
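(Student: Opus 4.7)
The plan is to show that every element of $R(Y)$ has a nonzero integer multiple in $S(Y)$; together with the saturation of $R(Y)$ inside $\CH_*(Y)$ established just above, this yields the claim. The whole argument will use only line bundles as spherical objects.

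The input is a routine Mukai vector computation. Since $c_1(Y)=0$ one has $\sqrt{\mathrm{td}(Y)}=1+c_Y$, and $\mathrm{ch}(\mathcal{O}_Y(L))=[Y]+L+(L^2/2)c_Y$ in $\CH_*(Y)$ for any $L\in\Pic(Y)$ (where $L^2/2\in\Z$ since the K3 intersection form is even), giving
\begin{equation*}
v(\mathcal{O}_Y(L)) \;=\; [Y]+L+\left(1+\tfrac{L^2}{2}\right)c_Y \;\in\; S(Y).
\end{equation*}
Specialising at $L=0$ records $[Y]+c_Y\in S(Y)$; subtracting this gives $L+\tfrac{L^2}{2}c_Y\in S(Y)$ for every $L\in\Pic(Y)$; and evaluating this last identity at $L$, $L'$ and $L+L'$, then combining via the polarisation identity $2(L\cdot L')=(L+L')^2-L^2-L'^2$, produces $(L\cdot L')c_Y\in S(Y)$ for every pair $L,L'\in\Pic(Y)$.

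Finally I would invoke the projectivity of $Y$ to pick an ample $H\in\Pic(Y)$ and set $d\coloneqq H^2>0$, so that $d\cdot c_Y\in S(Y)$. Combining this with $[Y]+c_Y\in S(Y)$ then yields $d\cdot[Y]\in S(Y)$, and combining it with $L+(L^2/2)c_Y\in S(Y)$ yields $d\cdot L\in S(Y)$ for every $L\in\Pic(Y)$. Hence $d\cdot R(Y)\subset S(Y)$, which is the required condition. I do not expect any substantive obstacle: the only genuine input beyond formal Chern-character bookkeeping is the existence of an ample class, which is built into the notion of ``algebraic K3 surface''. In particular the argument does not need the deeper inclusion $S(Y)\subset R(Y)$ of Huybrechts and Voisin except in order to phrase the statement.
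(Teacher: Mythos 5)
Your argument is correct, and it is a genuinely different route from the one taken in the paper. The paper does not compute any Mukai vectors explicitly: it invokes Kuleshov's theorem (plus the Lefschetz principle) to know that \emph{every} class of square $-2$ in $R(Y)$ is the Mukai vector of a spherical object, so that $S(Y)$ is exactly the subgroup generated by the $(-2)$-classes of $R(Y) \simeq U \oplus \Pic(Y)$, and then proves an abstract lattice lemma saying that the $(-2)$-classes of $U \oplus L$ span a finite-index sublattice. Your proof realises enough $(-2)$-classes by honest line bundles $\mathcal{O}_Y(L)$ (which are visibly spherical) and works out their Mukai vectors directly; in fact the classes $(1,(-\ell^2-2)/2,\ell)$ used in the paper's lattice lemma are precisely these Mukai vectors, so the two manipulations are parallel, but you avoid Kuleshov entirely and do not need the Huybrechts--Voisin inclusion $S(Y)\subset R(Y)$ for this direction, while the paper's route buys the stronger identification of $S(Y)$ as the span of all $(-2)$-classes in $R(Y)$. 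One caveat on your framing: the identities $\sqrt{\mathrm{td}(Y)}=1+c_Y$ (i.e.\ $c_2(Y)=24\,c_Y$ in $\CH_0(Y)$) and $L\cdot L'\in \Z\cdot c_Y$ are not formal Chern-character bookkeeping but are the main theorems of Beauville--Voisin, transferred to general $\BbK$ by the Lefschetz principle; since these are exactly the results underlying the definition of $R(Y)$ as a ring, using them is legitimate here, but they should be cited as such rather than treated as routine.
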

\begin{proof}
We equip $\CH_*(Y)$ with the Mukai pairing, which has the property that
\begin{equation}
\label{eqn:HRR}
 \chi(E,F) = - \langle v^\CH(E),v^\CH(F)\rangle
 \end{equation}
for objects $E,F$ of $\EuD(Y)$ (see \cite[Chapter 9]{Huybrechts:K3book}). 
It is clear from \eqref{eqn:HRR} that the Mukai vector $v^\CH(E)$ of a spherical object $E$ has square $-2$, and we know that it lies in $R(Y)$ by the aforementioned result of Huybrechts and Voisin. 

In fact conversely, every element of $R(Y)$ of square $-2$ is the Mukai vector of a spherical object: this follows for $\BbK = \C$ by a result of Kuleshov \cite{Kuleshov}, and thus for general $\BbK$ by the Lefschetz principle. 
In particular the subgroup $S(Y) \subset R(Y)$ is generated by the elements of square $-2$. 
Observing that $R(Y) \simeq U \oplus \Pic(Y)$ where $U$ denotes the hyperbolic lattice $\left( \begin{smallmatrix} 0&1 \\ 1&0 \end{smallmatrix}\right)$, the result now follows from Lemma \ref{lem:min2sat} below.
\end{proof}

\begin{lem}
\label{lem:min2sat}
Let $L$ be a lattice with non-zero pairing, and $S \subset U \oplus L$ the sublattice spanned by elements of square $-2$. 
Then the saturation of $S \subset U \oplus L$ is all of $U \oplus L$.
\end{lem}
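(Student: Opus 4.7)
The plan is to show that $S \otimes \Q = (U \oplus L) \otimes \Q$. Since $U \oplus L$ is a finitely generated free abelian group, its saturation $S^{\mathrm{sat}}$ inside $U\oplus L$ equals $(S \otimes \Q) \cap (U \oplus L)$, so equality of $\Q$-vector spaces will immediately yield the desired conclusion $S^{\mathrm{sat}} = U \oplus L$.

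Write $\{e,f\}$ for the standard hyperbolic basis of $U$, with $e^2 = f^2 = 0$ and $e\cdot f = 1$, so that $(ae+bf,\ell) \in U \oplus L$ has square $2ab + \ell^2$. The key step is to produce two explicit families of square-$(-2)$ elements. First, $v_0 \coloneqq (e-f,0)$ has square $-2$ and so lies in $S$. Second, for each $\ell \in L$, the element
\[
v_\ell \coloneqq \bigl(e - (1+2\ell^2)f,\, 2\ell\bigr)
\]
has square $-2(1+2\ell^2) + 4\ell^2 = -2$ and so also lies in $S$. Subtracting yields $v_\ell - v_0 = (-2\ell^2 f,\, 2\ell) \in S$; dividing by $2$ inside $S \otimes \Q$ gives $(-\ell^2 f, \ell) \in S \otimes \Q$ for every $\ell \in L$.

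It remains to extract the generators $(f,0)$, $(e,0)$ and $(0,\ell)$. For this we use the hypothesis that the pairing on $L$ is non-zero: if $\ell_1 \cdot \ell_2 \neq 0$ for some $\ell_1, \ell_2$, a short polarization computation shows that one of $\ell_1$, $\ell_2$, $\ell_1 + \ell_2$, $\ell_1 - \ell_2$ has non-zero square, so we may fix $\ell_0 \in L$ with $c \coloneqq \ell_0^2 \neq 0$. Applying the previous step to $\ell_0$ and $2\ell_0$ gives $(-cf,\ell_0)$ and $(-4cf, 2\ell_0)$ in $S \otimes \Q$; their combination $(-4cf, 2\ell_0) - 2(-cf,\ell_0) = (-2cf,0)$ lies in $S \otimes \Q$, and since $c \neq 0$ we conclude $(f,0) \in S \otimes \Q$. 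Then $(e,0) = v_0 + (f,0) \in S \otimes \Q$, and for any $\ell \in L$ we have $(0,\ell) = (-\ell^2 f,\ell) + \ell^2(f,0) \in S \otimes \Q$, producing a rational generating set of $U \oplus L$ inside $S \otimes \Q$.

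The only subtle point is the second family of $-2$ elements: for a general $\ell \in L$ the equation $2ab + \ell^2 = -2$ need not have an integer solution in $a,b$, but doubling $\ell$ forces $\ell^2$ to become divisible by $4$ and removes this obstruction. This denominator is precisely why the statement concerns the saturation rather than $S$ itself, and it is the only place where any real care is required.
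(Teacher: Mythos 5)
Your proof is correct and follows essentially the same route as the paper's: produce explicit square-$(-2)$ classes whose $L$-component is a prescribed multiple of $\ell$, take differences, and use a scaling trick at some $\ell_0$ with $\ell_0^2 \neq 0$ (guaranteed by the non-zero pairing) to extract the hyperbolic generators and then all of $L$ up to torsion. The one small refinement on your side is that by doubling $\ell$ you avoid any integrality issue, whereas the paper's class $(1,(-\ell^2-2)/2,\ell)$ implicitly uses that $\ell^2$ is even (true in the intended application $L \simeq \Pic(Y)$); otherwise the arguments coincide, since working in $S \otimes \Q$ and intersecting with $U \oplus L$ is equivalent to the paper's direct manipulation of the saturation.
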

\begin{proof}
For any $\ell \in L$, the class $(1,(-\ell^2-2)/2, \ell)$ has square $-2$ so lies in $S$. 
Subtracting $(1,-1,0) \in S$, we see that $(0, -\ell^2/2, \ell) \in S$. It follows that
\[ (0,-m^2\ell^2/2,m\ell) - m\cdot (0,-\ell^2/2,\ell) = (m-m^2)\cdot \ell^2/2 \cdot (0,1,0) \in S.\]
We may choose $(m-m^2)\cdot \ell^2/2 \neq 0$, so $(0,1,0) \in S^{\mathrm{sat}}$. 
Similarly $(1,0,0)\in S^{\mathrm{sat}}$. 
Subtracting appropriate multiples of these classes from $(1,(-\ell^2 - 2)/2,\ell)$ shows that $(0,0,\ell) \in S^{\mathrm{sat}}$, so $S^{\mathrm{sat}} = U \oplus L$.
\end{proof}

We conclude with an example, showing that the inclusion $S(Y) \subset R(Y)$ may be strict:

\begin{ex}\label{ex:satneeded}
If $L = \langle 2n \rangle$ where $n \not \equiv 3\,( 4)$, then the class $(1,0,0) \in U \oplus L$ does not lie in $S$.
\end{ex}
\begin{proof}
Let $(a,b,c)$ be a class of square $-2$, so $ab+nc^2 = -1$. 
If $n$ is even, this implies $a+b \equiv 0\,(2)$; if $n \equiv 1\,(4)$, this implies $a+b+c \equiv 0\,(2)$; hence the same is true of any element of $S$. 
Since $(1,0,0)$ does not satisfy either of these equations it cannot lie in $S$.
\end{proof}

\begin{rmk}
\label{rmk:notspanned}
Example \ref{ex:satneeded} shows that if $\Pic(Y) \simeq \langle 2n\rangle$ with $n \not \equiv 3(4)$, then the Beauville--Voisin class $c_Y$ is not spanned by Mukai vectors of spherical objects. 
This implies that on the mirror, the homology class of a fibre of the SYZ fibration is not spanned by Lagrangian spheres.
\end{rmk}

\bibliographystyle{amsalpha}
\bibliography{mybib}

\end{document}